\documentclass{article}

\usepackage{graphicx,hyperref,url}
\usepackage[all]{xy}
\usepackage[notref,notcite,final]{showkeys}

\newtheorem{theorem}{Theorem}
\newtheorem{lemma}[theorem]{Lemma}
\newtheorem{proposition}[theorem]{Proposition}
\newtheorem{corollary}[theorem]{Corollary}
\newtheorem{definition}[theorem]{Definition}

\renewcommand{\Im}{\operatorname{Im}}

\newcommand{\bull}{\text{\scalebox{.7}{$\bullet$}}}

\usepackage{amsfonts,amssymb,amsmath}
\newcommand{\C}{\ensuremath{\mathbb{C}}}

\newcommand{\Q}{\ensuremath{\mathbb{Q}}}
\newcommand{\R}{\ensuremath{\mathbb{R}}}
\newcommand{\T}{\ensuremath{\mathbb{T}}}
\newcommand{\Z}{\ensuremath{\mathbb{Z}}}

\date{}
\def\clock{{\count0=\time
           \divide\count0 60
           \ifnum\count0<10 0\fi\the\count0
           \multiply\count0 -60 \advance\count0 \time
           :\ifnum\count0<10 0\fi \the\count0
         }}
\newcommand{\timestamp}{{\small\vbox{\hbox{\tt\jobname.tex}
\hbox{\the\year/\the\month/\the\day, \clock}}}}

\newcommand{\footremember}[2]{%
\footnote{#2}
\newcounter{#1}
\setcounter{#1}{\value{footnote}}%
}
\newcommand{\footrecall}[1]{%
\footnotemark[\value{#1}]%
}

\begin{document}

\title{Elliptic fixed points with an invariant foliation:\\
Some facts and more questions}

\author{Alain Chenciner\footremember{IMCCE}{IMCCE (Observatoire de
    Paris, PSL Research University, CNRS)}\! \footnote{University
    Paris 7} ,
  David Sauzin\footrecall{IMCCE} \,\footremember{CNU}{Department of
    Mathematics, Capital Normal University, Beijing 100048, China} ,
  Shanzhong Sun\footrecall{CNU}\,
  \footnote{Academy for Multidisciplinary Studies, Capital Normal
    University, Beijing 100048, China} ,
  Qiaoling Wei\footrecall{CNU}}

\maketitle

\hskip5cm{\it dedicated to the memory of our friend} 

\hskip5cm{\it and colleague Alexey Borisov} 

\bigskip

\begin{abstract} We address the following question: let
  $F:(\R^2,0)\to(\R^2,0)$ be an analytic local diffeomorphism defined
  in the neighborhood of the non resonant elliptic fixed point 0 and
  let $\Phi$ be a formal conjugacy to a normal form $N$.  Supposing
  $F$ leaves invariant the foliation by circles centered at~$0$, what is
  the analytic nature of $\Phi$ and $N$ ?
\end{abstract}

\section{Motivation: the two families $A_{\lambda,a,d},B_{\lambda,a,d}$}\label{Ex1}

Understanding  the normalization of the following examples of local analytic diffeomorphisms of the plane with an elliptic fixed point was the motivation for raising the questions studied in this paper. Preserving the foliation by circles, these examples are radially trivial but angularly subtle; a normalization is a formal change of coordinates which makes the angular behavior trivial. 
\smallskip

\noindent $A_{\lambda,a,d}$ and $B_{\lambda,a,d}$ are the local maps from $(\R^2,0)$ to itself respectively defined by
\begin{equation*}
\left\{
\begin{split}
A_{\lambda,a,d}(z)&=\lambda z(1+a|z|^{2d})e^{\pi (z-\bar z)}\\
B_{\lambda,a,d}(z)&=\lambda z(1+a|z|^{2d})e^{\pi |z|^2(2i+z-\bar z)}\; ,
\end{split}
\right.
\end{equation*}
where $\lambda=\rho e^{2\pi i\omega},\; 0<\rho\le 1$ and $a\in\R,\, a<0$.
In polar coordinates $z=re^{2\pi i\theta}$~:

\begin{equation*}
\left\{
\begin{split}
A_{\lambda,a,d}(r,\theta)&=\bigl(\rho r(1+ar^{2d}),\theta+\omega+r\sin 2\pi\theta\bigr),\\
B_{\lambda,a,d}(r,\theta)&=\bigl(\rho r(1+ar^{2d}),\theta+\omega+r^2+r^3\sin 2\pi\theta\bigr).
\end{split}
\right.
\end{equation*}

\noindent We shall use the notations 
\begin{equation*}
\left\{
\begin{split}
A_{\lambda}(z)&=A_{\lambda,0,d}(z)=\lambda ze^{\pi (z-\bar z)},\\
B_\lambda(z)&=B_{\lambda,0,d}(z)=\lambda ze^{\pi |z|^2(2i+z-\bar z)}\,.
\end{split}
\right.
\end{equation*}

\noindent The families, parametrized by $r$, of analytic diffeomorphisms of the circle defined by the angular component of $A_{\lambda,a,d}$ and $B_{\lambda,a,d}$ are subfamilies of {\it Arnold's family} 
$$\theta\mapsto \theta+ s+t\sin2\pi\theta,$$
whose resonant zones (parameter values for which the rotation number is rational, the so-called {\it Arnold's tongues}) are depicted on figure 1.
In particular, each rational rotation number corresponds to an interval of values of $r$ (\cite{A,H}). 
\vskip0.5cm
\hskip -0.7cm
\includegraphics[scale=0.73]{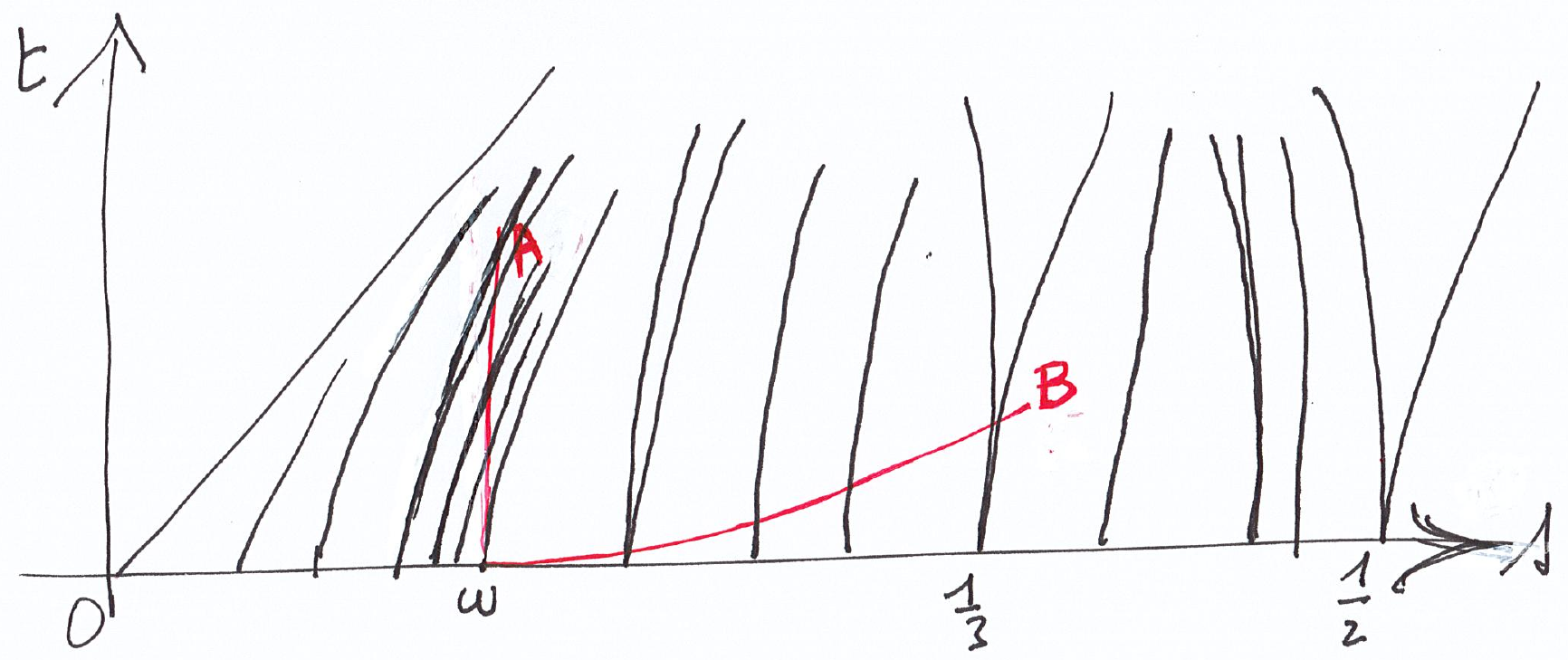}
\begin{center}
Figure 1 : Families $A$ and $B$.
\end{center}

\section{Formal theory}

\subsection{Special normal forms}\label{special}
\begin{definition}
Let ${\cal F}_0$ be the foliation of $\R^2$ by circles centered at 0. A formal diffeomorphism $F:(\R^2,0)\to(\R^2,0)$ is said to preserve ${\cal F}_0$ if $|F(z)|^2$ depends only on $|z|^2$. Identifying $\R^2$ with $\C$, this means that it is of the form 
\begin{equation*}
\left\{
\begin{split}
F(z)&=\lambda z\bigl(1+f(|z|^2)\bigr)e^{2\pi ig(z)}, \quad \hbox{where} \quad \lambda\not=0\in\C,\\
f(u)&=\sum_{n\ge 1}f_nu^n,\; f_n\in\R, \quad\quad g(z)=\sum_{j+k\ge 1}g_{jk}z^j\bar z^k,\; g_{kj}=\bar g_{jk}\in\C.
\end{split}
\right.
\end{equation*} 
\end{definition}
Blowing up the fixed point, that is using polar coordinates $z=re^{2\pi i\theta}$, turns the (formal) diffeomorphism $F$ into a skew-product
over the half-line $\R^+$ (which we shall still call $F$):
$$F:\R^+\times \T^1\to\R^+\times \T^1,\quad F(r,\theta)=\left(r\bigl(1+f(r^2)\bigr),\theta+\omega+g(r,\theta)\right).$$
Hence, iterating $F$ amounts to composing sequences of (formal) circle diffeomorphisms. 

\smallskip

\noindent The eigenvalues of the linear part $dF(0)$ of~$F$ are $\lambda$
and~$\bar\lambda$.
The case $|\lambda|<1$ is well understood since Poincar\'e: $F$ is
then locally formally conjugate to $dF(0)$ (and analytically if~$F$ is
analytic).
From now on, we shall suppose that $|\lambda|=1$ and even that
\[
 \lambda=e^{2\pi i\omega},\quad \omega\in\R\setminus\Q
\]
In that case, the only ``resonant monomials'', i.e.\ monomials
$z^p\bar z^q$ such that $\lambda^p\bar\lambda^q=1$, are those of the
form $|z|^{2p}$, since
\begin{equation}  \label{eqNRlambda} 
p\neq q \quad\Rightarrow\quad \lambda^p\bar\lambda^q-1\neq0.
\end{equation}

\begin{lemma}[Special normal forms]\label{pres}
  Let $F$ be a formal diffeomorphism of $\R^2$ defined in the
  neighborhood of the elliptic fixed point 0. Suppose that $F$
  preserves the foliation ${\cal F}_0$. If the derivative
  $dF(0)$ is a non-periodic rotation $z\mapsto \lambda z$, there exists
  a formal conjugacy $\Phi$ of $F$ to a normal form $N$ such that
  $\Phi$ preserves formally each circle centered at 0, hence $N$ sends
  formally each circle centered at 0 on the same circle as $F$ does,
  that is $\Phi\circ F = N \circ \Phi$ with
\begin{equation*}
\left\{
\begin{split}
\Phi(z)&=ze^{2\pi i\varphi(z)},\quad \varphi(z)=\sum_{p+q\ge 1}\varphi_{pq}z^p\bar z^q,\quad \varphi_{qp}=\bar\varphi_{pq},\\
N(z)&=\lambda z(1+f(|z|^2))e^{2\pi i n(|z|^2)},\quad n(|z|^2)=\sum_{s\ge 1}n_s|z|^{2s}.
\end{split}
\right.
\end{equation*}
Such conjugacies and the corresponding normal forms will be called
``special". The coefficients $\varphi_{pp}$ can be chosen arbitrarily in~$\R$. 
\end{lemma}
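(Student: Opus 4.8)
The plan is to construct $\Phi$ directly in the prescribed form $\Phi(z)=ze^{2\pi i\varphi(z)}$ with $\varphi(z)=\sum_{p+q\ge1}\varphi_{pq}z^p\bar z^q$ subject to the symmetry $\varphi_{qp}=\bar\varphi_{pq}$ (equivalently, $\varphi$ is real-valued), so that $|\Phi(z)|^2=z\bar z\,e^{2\pi i(\varphi(z)-\overline{\varphi(z)})}=|z|^2$ and $\Phi$ preserves every circle centered at $0$ formally. Substituting the postulated forms of $\Phi$, $F$ and $N$ gives
\[
\Phi\circ F(z)=\lambda z\bigl(1+f(|z|^2)\bigr)\,e^{2\pi i\,(g(z)+\varphi(F(z)))},\qquad N\circ\Phi(z)=\lambda z\bigl(1+f(|z|^2)\bigr)\,e^{2\pi i\,(\varphi(z)+n(|z|^2))},
\]
where $\varphi(F(z))$ denotes substitution of $F(z),\overline{F(z)}$ into the series $\varphi$. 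Since $\lambda z(1+f(|z|^2))$ is a common factor and a non-zero-divisor in the ring of formal power series in $z,\bar z$, and the two exponentials are units whose exponents have no constant term (so a formal series is determined by its exponential), the conjugacy equation $\Phi\circ F=N\circ\Phi$ is equivalent to the single scalar identity
\[
\varphi(F(z))-\varphi(z)=n(|z|^2)-g(z).\qquad(\star)
\]
Thus it suffices to solve $(\star)$ for the unknowns $\varphi$ (real) and $n$ (real, a series in $|z|^2$ without constant term).

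I would solve $(\star)$ by induction on the total degree. Decompose $\varphi=\sum_{m\ge1}\varphi_m$, $g=\sum_{m\ge1}g_m$ into homogeneous parts, $\varphi_m=\sum_{p+q=m}\varphi_{pq}z^p\bar z^q$. Because $F(z)=\lambda z+O(|z|^2)$, the degree-$m$ part of $\varphi(F(z))$ equals $\varphi_m(\lambda z,\bar\lambda\bar z)$ plus a term $P_m(z,\bar z)$ which is a universal polynomial in the coefficients $f_n$, $g_{jk}$ and in the $\varphi_{pq}$ with $p+q<m$ only; this finite dependence on lower orders is what makes the recursion well founded. Hence at degree $m$, equation $(\star)$ reads
\[
\varphi_m(\lambda z,\bar\lambda\bar z)-\varphi_m(z,\bar z)=n^{[m]}(|z|^2)-g_m(z,\bar z)-P_m(z,\bar z),
\]
where $n^{[m]}(|z|^2)=n_{m/2}|z|^m$ when $m$ is even and $0$ when $m$ is odd; everything on the right is known except the scalar $n_{m/2}$. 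On the monomial $z^p\bar z^q$, $p+q=m$, the left-hand operator is multiplication by $\lambda^p\bar\lambda^q-1=e^{2\pi i(p-q)\omega}-1$.

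Now one works monomial by monomial at each level $m$. If $p\neq q$, then by \eqref{eqNRlambda} (equivalently, since $\omega\notin\Q$ and $p-q\neq0$) the multiplier $\lambda^p\bar\lambda^q-1$ is nonzero, and $z^p\bar z^q$ does not occur in $n^{[m]}(|z|^2)$, so the coefficient $\varphi_{pq}$ is uniquely determined. If $p=q$ (which forces $m=2p$ even), the multiplier vanishes on $z^p\bar z^p=|z|^{2p}$, so solvability forces us to take $n_p$ equal to the coefficient of $|z|^{2p}$ in $g_m+P_m$; this annihilates the $|z|^{2p}$-component of the right-hand side, after which the equation imposes no condition whatsoever on $\varphi_{pp}$, which may therefore be prescribed arbitrarily in $\R$ (it is fixed once and for all at step $m=2p$, and later steps use its value through the polynomials $P_{m'}$ but never constrain it --- this is the asserted freedom). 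That the construction stays ``real'' follows by the same induction: $f$ and $g$ are real series by hypothesis, so if the already-determined $\varphi_{p'q'}$ ($p'+q'<m$) and $n_s$ ($2s<m$) are real then $g_m+P_m$ is a real homogeneous polynomial, its $z^q\bar z^p$-coefficient being the conjugate of its $z^p\bar z^q$-coefficient; combined with $\overline{\lambda^p\bar\lambda^q-1}=\lambda^q\bar\lambda^p-1$ this gives $\overline{\varphi_{pq}}=\varphi_{qp}$ for $p\neq q$, while $n_p$ (a coefficient of a real polynomial) is real and $\varphi_{pp}$ is chosen in $\R$. This yields $\varphi$ and $n$ with the required symmetries, hence $\Phi$ and $N$ of exactly the announced shapes with $\Phi\circ F=N\circ\Phi$.

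I expect the only genuine work to lie in the reduction to the scalar equation $(\star)$ --- that is, in checking that equating the two formal maps $\Phi\circ F$ and $N\circ\Phi$ is truly equivalent to $(\star)$, which rests on the cancellability of the common factor $\lambda z(1+f(|z|^2))$ and on a formal series without constant term being determined by its exponential --- together with the routine but slightly tedious verification that $P_m$ depends only on the lower-order coefficients and that reality propagates through the recursion. The conceptual core is then immediate: $(\star)$ is solvable with no small-divisor issue because \eqref{eqNRlambda} makes every non-resonant multiplier $\lambda^p\bar\lambda^q-1$ invertible, and the sole genuine obstruction at each even degree $m=2p$ is precisely absorbed by the one free scalar $n_p$, the diagonal coefficients $\varphi_{pp}$ remaining free.
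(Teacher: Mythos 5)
Your proposal is correct and follows essentially the same route as the paper: reduce $\Phi\circ F=N\circ\Phi$ to the scalar homological equation $g-n+\varphi\circ F-\varphi=0$, solve it degree by degree using the non-resonance condition \eqref{eqNRlambda} to determine $\varphi_{pq}$ for $p\neq q$, absorb the resonant terms $|z|^{2p}$ into $n_p$, and leave $\varphi_{pp}$ free. Your inductive propagation of reality is a minor variant of the paper's argument (which conjugates the equation and swaps $p,q$), but the substance is identical.
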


\begin{proof} 
Starting with 
$$F(z)=\lambda z(1+f(|z|^2))e^{2\pi ig(z)},\quad \quad\lambda=e^{2\pi i\omega},\;  \omega\notin\Q,$$
let us look for a formal change of coordinates 
$$\Phi(z)=ze^{2\pi i\varphi(z)},\quad \varphi(z)=\sum_{p+q\ge 1}\varphi_{pq}z^p\bar z^q,\quad \varphi_{qp}=\bar\varphi_{pq},$$
which transforms $F$ into 
a normal form 
$$N(z)=\lambda z(1+f(|z|^2))e^{2\pi i n(|z|^2)}\, .$$
The equation $\Phi\circ F=N\circ \Phi$ is equivalent to the homological equation
\begin{equation}
g(z)-n(|z|^2)+\varphi\circ F(z)-\varphi(z)=0.\tag{H}\label{Homoleq}
\end{equation}

\noindent Writing
\begin{multline*}
\bigl(1+f(|z|^{2})\bigr)e^{2\pi ig(z)}=
(1+\sum_rf_r|z|^{2r})\Big(1+\sum_s\frac{1}{s!}\big(2\pi i\sum_{t,u}g_{tu}z^t\bar z^u\big)^s\Big)
\\[1ex]
=1+\sum_{\alpha,\beta}c_{\alpha,\beta}z^\alpha\bar z^\beta,
\end{multline*}
the homological equation becomes
\begin{multline*}
-\sum_{j+k\ge 1}g_{jk}z^j\bar z^k+\sum_{s\ge 1}n_s|z|^{2s}\\[1ex]
=\sum_{p+q\ge 1}\varphi_{pq}\Big[\lambda^p\bar\lambda^q\bigl(1+\sum_{\alpha,\beta}c_{\alpha,\beta}z^\alpha\bar z^\beta\bigr)^p\bigl(1+\sum_{\alpha,\beta}\bar c_{\alpha,\beta}\bar z^\alpha z^\beta\bigr)^q-1\Big]z^p\bar z^q.
\end{multline*}
\noindent Once the coefficients $\varphi_{p'q'},\, p'+q'<p+q$ and $n_s,\, 2s<p+q$, are determined, identification of the terms in $z^p\bar z^q$ determines $\varphi_{pq}(\lambda^p\bar\lambda^q-1)$ if $p\not=q$ (resp. determines $n_{p}$ if $p=q$). 
In view of~\eqref{eqNRlambda}, this determines by induction the coefficients $\varphi_{pq}$ such that $p\not=q$. For $p=q$ the coefficient $\varphi_{pp}$ can be chosen arbitrarily provided it is real, hence the non unicity. 
\smallskip

\noindent Finally, the first member of the homological equation is real; replacing the equation by its conjugate and exchanging~$q$ and~$p$ amounts to the original equation apart from transforming 
$\varphi_{pq}$ into $\bar\varphi_{qp}$. 
This proves the lemma. 
\end{proof}

\begin{definition}\label{basic}
We shall call ``basic" and denote by $\Phi^*(z)$ the unique special
formal conjugacy without resonant terms in its angular component, i.e. 
$$\Phi^*(z)=ze^{2\pi i\varphi^*(z)},\quad \varphi^*(z)=\sum_{p+q\ge
  1,\ p\not=q}\varphi_{pq}z^p\bar z^q\; .$$ The corresponding normal
form $N^*=\Phi^*\circ F \circ (\Phi^*)^{(-1)}$ is called the basic normal form.
\end{definition}

\noindent All the other special formal conjugacies $\Phi(z)$ of $F$ to a normal form can be written
$$\Phi(z)=ze^{2\pi i\left(\varphi^*(z)+b(|z|^2\right)},$$
where $b$ is an arbitrary real formal series in one variable without constant term. A natural choice is given by the following lemma:
\begin{lemma}\label{n-pol}
  $1)$ If the valuation of $f$ is $d$, that is if $f(|z|^2)$ starts
  with a term in~$|z|^{2d}$, then the coefficients
  $n_s,\, 1\le s\le d,$ of a special normal form are uniquely
  determined; moreover, the $\varphi_{pp}$ can be chosen so that
  $n_{d+p}$ takes any given value, in particular 0, for $p\geq 1$.
If $f\equiv0$ (a case which we call conservative), then the special
normal form is uniquely determined.
\medskip
  
\noindent $2)$  The initial form $\tilde g_k$ of the formal function
$$\tilde g(z)=g(z)-\sum_{s\geq 1}n_s|z|^{2s}$$
does not contain any resonant term. In other words, 
$$\tilde g(re^{2\pi i\theta})=\tilde g_k(re^{2\pi i\theta})+O(r^{k+1}),\quad\hbox{\rm with}\quad \tilde g_k(re^{2\pi i\theta})=r^kP_k(\theta),$$
where $P_k(\theta)$ is a real trigonometric polynomial  of degree at most $k$ with mean value zero.
 
\end{lemma}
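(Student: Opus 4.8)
The plan is to analyze the homological equation \eqref{Homoleq} degree by degree, exactly as in the proof of Lemma~\ref{pres}, keeping careful track of how the resonant coefficients $n_s$ and the diagonal coefficients $\varphi_{pp}$ enter. For part~1), I would first observe that when we expand $(1+f(|z|^2))e^{2\pi ig(z)}=1+\sum c_{\alpha,\beta}z^\alpha\bar z^\beta$, the lowest-degree contribution of $f$ appears in degree $2d$, while the diagonal term $\varphi_{pp}$ only influences the equation through the bracket $[\lambda^p\bar\lambda^q(\cdots)^p(\cdots)^q-1]z^p\bar z^q$; when $p=q$ this bracket equals $|z|^{2p}$ times a factor whose lowest-order correction to $1$ comes from $\sum c_{\alpha,\beta}z^\alpha\bar z^\beta$, i.e.\ is of order $\min(2,2d)$ in $|z|^2$ beyond $|z|^{2p}$ (the ``$2$'' coming from the quadratic part of $g$). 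The key point is thus: in degree $2s$ of the homological equation, the coefficient $n_s$ is to be matched, and the contributions of the free parameters $\varphi_{pp}$ with $p<s$ all come multiplied by coefficients $c_{\alpha,\beta}$, the earliest of which (for $f\equiv 0$) has total degree $\geq 2$, so $\varphi_{s-1,s-1}$ is the first diagonal coefficient that can affect $n_s$, and it does so only for $s\geq d+1$ because for $s\le d$ all the relevant $c_{\alpha,\beta}$ with $\alpha+\beta<2s$ vanish or are forced. Hence $n_1,\dots,n_d$ are uniquely determined (they are the ``conservative'' part, independent of the choice of $\Phi$), whereas for $s=d+p$, $p\geq 1$, the coefficient $n_{d+p}$ depends affinely on $\varphi_{pp}$ with a nonzero coefficient $f_d\neq 0$, so $\varphi_{pp}$ can be tuned to make $n_{d+p}$ equal to any prescribed real value, in particular $0$. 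When $f\equiv 0$ there is no such leverage at any order, and every $n_s$ is uniquely determined; this is the conservative case.

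For part~2), let $k$ be the valuation of $\tilde g(z)=g(z)-\sum_{s\ge1}n_s|z|^{2s}$. Writing $g=\sum g_{jk}z^j\bar z^k$, the resonant monomials in $g$ are the $g_{pp}|z|^{2p}$; by the definition of the basic normal form $N^*$ (Definition~\ref{basic}), or rather by construction of a special normal form, the series $n(|z|^2)$ is precisely designed so that $g(z)-n(|z|^2)$ has all its resonant contributions absorbed --- more carefully, I would argue inductively on the degree that the homological equation forces, at each diagonal degree $p=p$, the value of $n_p$ (up to the freedom analyzed in part~1), and that the initial form $\tilde g_k$ of $\tilde g$ cannot contain a term $|z|^{2p}$: indeed if $\tilde g_k$ contained a nonzero resonant term $c|z|^{2k/2}$ (so $k$ even, $k=2m$), then since at degree $k$ the lower-order coefficients $\varphi_{pq}$ ($p+q<k$) are already fixed, identification of the $|z|^{2m}$ term would have forced $n_m$ to absorb it, contradicting that we have already subtracted $\sum n_s|z|^{2s}$. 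So $\tilde g_k$ has no resonant term, i.e.\ no monomial $z^m\bar z^m$.

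It remains to translate ``$\tilde g_k$ has no resonant term'' into the trigonometric statement. Since $g_{kj}=\bar g_{jk}$, the function $g$ is real on $\R^2$, hence so is $\tilde g$ and so is its initial form $\tilde g_k$. Setting $z=re^{2\pi i\theta}$, a homogeneous real polynomial of degree $k$ in $z,\bar z$ becomes $r^k$ times a real trigonometric polynomial $P_k(\theta)=\sum_{|m|\le k,\ m\equiv k\,(2)} a_m e^{2\pi i m\theta}$ (the constraint $m\equiv k \bmod 2$ because a monomial $z^j\bar z^{k-j}$ contributes $e^{2\pi i(2j-k)\theta}$); the degree of $P_k$ is at most $k$, and its mean value $a_0$ is exactly the coefficient of the resonant monomial $z^{k/2}\bar z^{k/2}$, which we have just shown is zero (and is vacuously zero if $k$ is odd). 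Reality of $P_k$ follows from $\overline{a_m}=a_{-m}$. This yields the claimed form $\tilde g_k(re^{2\pi i\theta})=r^kP_k(\theta)$ with $P_k$ a real trigonometric polynomial of degree $\le k$ and zero mean, and the error term $O(r^{k+1})$ is just the definition of the initial form.

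The main obstacle I anticipate is bookkeeping in part~1): making precise exactly which products of the previously-determined $\varphi_{p'q'}$ and the coefficients $c_{\alpha,\beta}$ can reach the diagonal degree $2s$, and verifying that the coefficient of $\varphi_{pp}$ in the expression for $n_{d+p}$ is (a nonzero multiple of) $f_d$ rather than something that could accidentally vanish. A clean way to do this is to differentiate the homological equation with respect to the formal parameter $\varphi_{pp}$, or equivalently to compare the normal forms obtained from two special conjugacies differing only by $b(|z|^2)$: the relation $\Phi=\Phi^*\!\cdot e^{2\pi i b(|z|^2)}$ between conjugacies gives a direct recursion expressing the change in $n$ in terms of $b$ and $f$, from which the valuation count is transparent.
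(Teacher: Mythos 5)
Your part 1) has a gap at the decisive step. Everything hinges on identifying exactly how $\varphi_{pp}$ enters the homological equation, and the key fact is a cancellation you never establish: since $g$ is real, $\bigl|(1+f(|z|^2))e^{2\pi i g(z)}\bigr|^2=(1+f(|z|^2))^2$, so for $p=q$ the bracket $\lambda^p\bar\lambda^q\bigl(1+\sum c_{\alpha\beta}z^\alpha\bar z^\beta\bigr)^p\bigl(1+\sum \bar c_{\alpha\beta}\bar z^\alpha z^\beta\bigr)^q-1$ collapses identically to $(1+f(|z|^2))^{2p}-1$. Hence the coefficient of $\varphi_{pp}$ is $|z|^{2p}\bigl[(1+f(|z|^2))^{2p}-1\bigr]$: purely resonant, of valuation $p+d$ in $|z|^2$, with leading coefficient $2p\,f_d\neq0$. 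This single identity (which the paper reads off from $\tilde g=\varphi-\varphi\circ F$) gives the whole of part 1): $n_1,\dots,n_d$ are untouched by the $\varphi_{pp}$, then $\varphi_{11},\varphi_{22},\dots$ tune $n_{d+1},n_{d+2},\dots$ recursively, and the freedom disappears entirely when $f\equiv0$. Your substitute bookkeeping is incorrect: the assertion that for $s\le d$ ``all the relevant $c_{\alpha,\beta}$ with $\alpha+\beta<2s$ vanish or are forced'' is false (for $A_{\lambda,a,d}$ with $d$ large, $g(z)=\Im z$ gives $c_{10},c_{01}\neq0$; these low-degree coefficients do not vanish, they only cancel in the specific real combination multiplying $\varphi_{pp}$); the claim that the diagonal bracket has a correction of order $\min(2,2d)$ ``coming from the quadratic part of $g$'' is false, and if it were true the $\varphi_{pp}$ would retain leverage on $n$ even when $f\equiv0$, contradicting the conservative-case uniqueness you assert two lines later; and ``$\varphi_{s-1,s-1}$ is the first diagonal coefficient that can affect $n_s$'' is wrong for $d\ge2$ (it is $\varphi_{s-d,s-d}$). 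So, as written, neither the uniqueness of $n_1,\dots,n_d$ nor the conservative case is actually proved.

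The fallback you sketch in your last paragraph is sound and would repair this: two special conjugacies differ by $b(|z|^2)=\sum_l b_l|z|^{2l}$, and the corresponding normal forms satisfy $n_2(|z|^2)-n_1(|z|^2)=b(|F(z)|^2)-b(|z|^2)=\sum_l b_l|z|^{2l}\bigl[(1+f(|z|^2))^{2l}-1\bigr]$ (this is the first remark after Lemma \ref{general-nf}); here the disappearance of $g$ is automatic because $|F(z)|^2=|z|^2(1+f(|z|^2))^2$, and the same valuation count as above finishes part 1). Carried out, this is essentially the paper's computation in a different packaging; but you left it as a remark, and your primary argument does not stand on its own. Part 2) is fine and in the same spirit as the paper, which observes that the initial form is $\sum_{p+q=k}\varphi_{pq}(1-\lambda^{p-q})z^p\bar z^q$, whose diagonal terms are killed by $1-\lambda^{p-p}=0$ --- the same absorption phenomenon you invoke; your polar-coordinate translation (mean value of $P_k$ $=$ coefficient of the monomial $z^{k/2}\bar z^{k/2}$, reality from $g_{kj}=\bar g_{jk}$) is correct and slightly more detailed than the paper's.
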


\noindent Hence, the stronger the contraction (i.e.\ the smaller is $d$), the less constrained is the torsion of a formal normal form (compare to section \ref{strong} were we recall that in case of a linear contraction (i.e.\ $|\lambda|<1$), the normal form can be chosen linear in the angle).
\smallskip

\begin{proof}
1) The homological equation \eqref{Homoleq} expresses $\tilde g$ as
\begin{equation*}
\begin{split}
\tilde g(z)&=\varphi(z)-\varphi\circ F(z)\\
&=\sum_{p+q\geq 1}\varphi_{pq}\left[1-\lambda^{p-q}\bigl(1+a|z|^{2d}+O(|z|^{2(d+1)})\bigr)^{p+q}e^{2\pi i(p-q)g(z)}\right]z^p\bar z^q.
\end{split}
\end{equation*}
The coefficient of $\varphi_{pp}$ is $|z|^{2p}\left[\bigl(1+f(|z|^2)\bigr)^{2p}-1\right]$ which starts with a term in ${|z|^{2(p+d)}}$: the choice of $\varphi_{11}$ allows choosing $n_{d+1}$, then the choice of $\varphi_{22}$ allows choosing $n_{d+2}$, and so on. 

\smallskip

\noindent 2) Let $k$ be the smallest degree of monomials in $\tilde{g}$, then
$$\tilde{g}_k(z)=\sum_{p+q=k}\varphi_{pq}\left[1-\lambda^{p-q}\right]z^p\bar
z^q$$  does not contain any resonant term.
\end{proof}
\medskip

\goodbreak

\subsubsection{The formal conjugacy equations for $F=A_{e^{2\pi i\omega},a,d}$}\label{FormalA}

\noindent We look for a conjugacy $\Phi(z)=ze^{2\pi i\varphi(z)}$ to a special normal form 
$$N(z)=\lambda z(1+a|z|^{2d})e^{2\pi in(|z|^2)},\quad \lambda=e^{2\pi i\omega},\quad \omega\;\hbox{irrational}.$$
The conjugacy equation $\Phi\circ F=N\circ\Phi$ becomes the homological equation
$$\frac{z-\bar z}{2i}+\varphi\bigl(\lambda z(1+a|z|^{2d})e^{\pi(z-\bar z)}\bigr)-\varphi(z)=n(|z|^2),$$
that is
$$\frac{z-\bar z}{2i}+\sum_{j+k\ge 1}\varphi_{jk}\left(\lambda^{j-k}(1+a|z|^{2d})^{j+k}e^{(j-k)\pi(z-\bar z)}-1\right)z^j\bar z^k=\sum_{s\ge 1}n_s|z|^{2s}.$$
Expanding the exponential we get
\begin{equation*}
\begin{split}
&\sum_{j+k\ge 1}\varphi_{jk}\left(\lambda^{j-k}(1+a|z|^{2d})^{j+k}\left(1+\sum_{n\ge 1}\frac{\pi^n}{n!}(j-k)^n(z-\bar z)^n\right)-1\right)z^j\bar z^k\\
&=-\frac{z-\bar z}{2i}+\sum_{s\ge 1}n_s|z|^{2s}.
\end{split}
\end{equation*}
Separating terms of degree 1 and 2 in $z,\bar z$ and the ones containing $a$ leads to
\[
\varphi_{10}=\frac{1}{2i(1-\lambda)}=\bar\varphi_{01},\quad
\varphi_{20}=\frac{\pi\lambda}{2i(1-\lambda)(1-\lambda^2)}=\bar\varphi_{02},\quad
n_1=-\Im\frac{\pi\lambda}{1-\lambda},
\]
and
\begin{multline*}
\sum_{j+k=N\ge 3}\varphi_{jk}(\lambda^{j-k}-1)z^j\bar z^k+\sum_{n\ge 1}\sum_{j+k=N-n}\varphi_{jk}\lambda^{j-k}\frac{\pi^n}{n!}(j-k)^n(z-\bar z)^nz^j\bar z^k\\[.5ex]
\hspace{-15em}  +\sum_{(j+k,l)\in I_N}\varphi_{jk}\lambda^{j-k}
a^l{j+k\choose l}|z|^{2dl}z^j\bar z^k\\[.5ex]
+\sum_{n\ge 1}\sum_{(j+k,l)\in I_{N-n}}\varphi_{jk}\lambda^{j-k}a^l{j+k\choose l}|z|^{2dl}\frac{\pi^n}{n!}
(j-k)^n(z-\bar z)^nz^j\bar
z^k=\left\{\begin{split}&n_{\frac{N}{2}}|z|^{N}\,\hbox{if $N$
      even}\\[.7ex] &0\,\hbox{if $N$ odd}\end{split}\right.
\end{multline*}
where the condition $(j+k,l)\in I_N$ means
$j+k=N-2dl\ge 1$ and $1\le l\le \frac{N}{2d+1}$.
\vspace{-6ex}

\begin{center}
\includegraphics[scale=0.28]{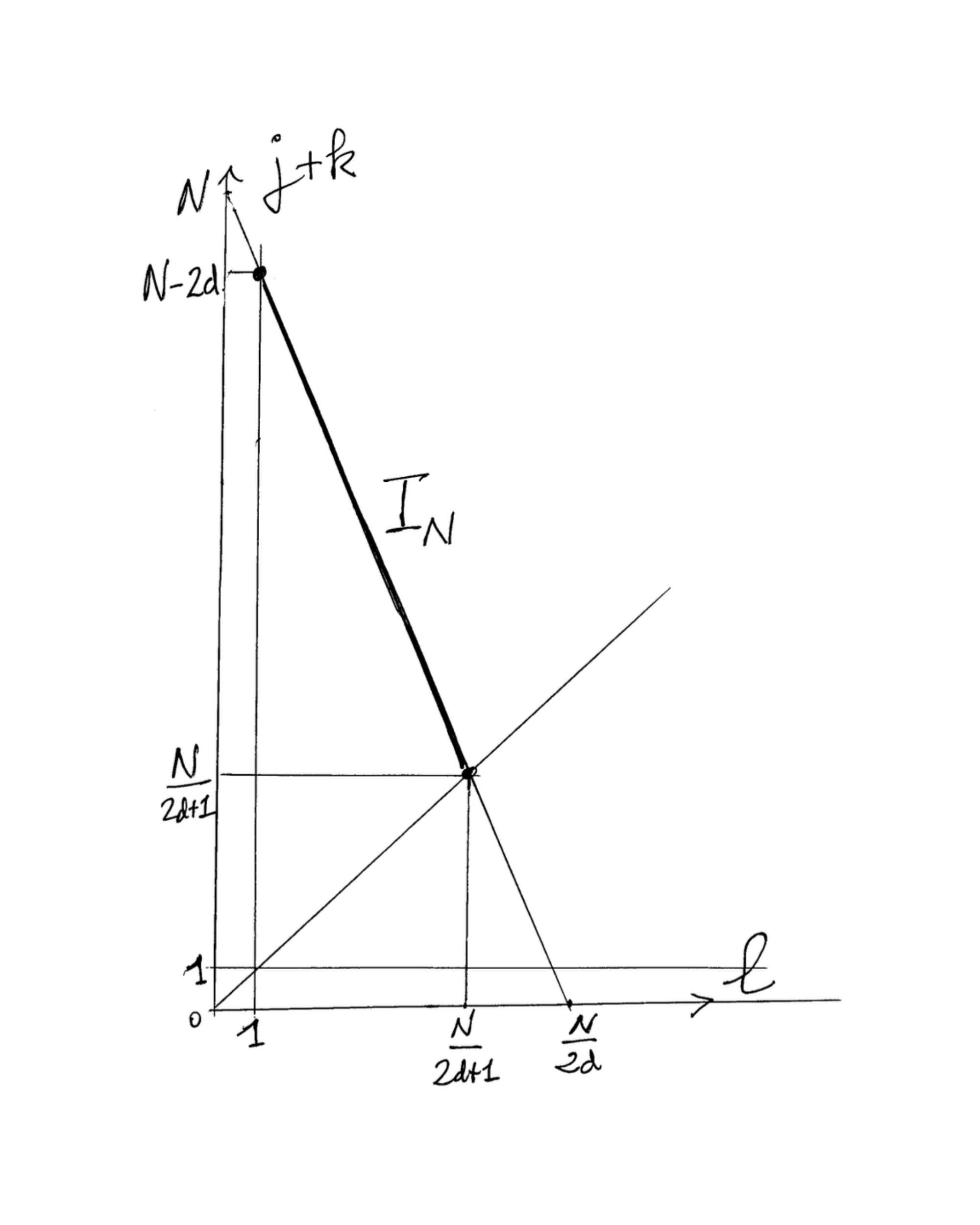}
\vspace{-3ex}

Figure 2: The  condition $(j+k,l)\in I_N$.
\end{center}

\noindent According to what we previously noticed, the coefficients
$\varphi_{pp}$ can be chosen so that
$n(|z|^2)=n_1|z|^2+\cdots+n_d|z|^{2d}$.
Each $n_s$, $1\le s \le d$, is a well-defined function of $(\lambda,
a, d)$, polynomial in~$a$ and rational in~$\lambda$.
\smallskip

\noindent Thus, if $F=A_{e^{2\pi i\omega},a,d}$,  
$$\tilde g(z)=\Im z-\frac{\pi}{2i} \, \frac{\lambda+1}{\lambda-1}|z|^2-n_2|z|^4-\cdots -n_d|z|^{2d}.$$

\subsubsection{The formal conjugacy equations for $F=B_{e^{2\pi i\omega},a,d}$}\label{FormalB}

\noindent The corresponding homological equation is
$$|z|^2\left(1+\frac{z-\bar z}{2i}\right)+\varphi\bigl(\lambda z(1+a|z|^{2d})e^{\pi|z|^2(2i+z-\bar z)}\bigr)-\varphi(z)=n(|z|^2),$$
that is
\begin{equation*}
\begin{split}
&\sum_{j+k\ge 1}\varphi_{jk}\left(\lambda^{j-k}(1+a|z|^{2d})^{j+k}e^{(j-k)\pi|z|^2(2i+z-\bar z)}-1\right)z^j\bar z^k\\
&=-|z|^2\left(1+\frac{z-\bar z}{2i}\right)+O(|z|^4).
\end{split}
\end{equation*}
Expanding the exponential we get
\begin{equation*}
\begin{split}
&\sum_{j+k\ge 1}\varphi_{jk}\left[\lambda^{j-k}(1+a|z|^{2d})^{j+k}\left(1+\sum_{n\ge 1}\frac{\pi^n}{n!}(j-k)^n|z|^{2n}(2i+z-\bar z)^n\right)-1\right]z^j\bar z^k\\
&=-|z|^2\left(1+\frac{z-\bar z}{2i}\right)+\sum_{s\ge 1}n_s|z|^{2s}.
\end{split}
\end{equation*}
Equating terms of degree up to 4 in $z,\bar z$ leads to
\begin{equation*}
\left\{
\begin{split}
&\varphi_{10}=\bar\varphi_{01}=0,\quad \varphi_{20}=\bar\varphi_{02}=0,\; n_1=1,\;
\varphi_{30}=\bar\varphi_{03}=0,\; \varphi_{21}=\bar\varphi_{12}=\frac{-1}{2i(\lambda-1)},\\
&\varphi_{40}=\bar\varphi_{04}=0,\; \varphi_{31}=\bar\varphi_{13}=0,\quad  n_2=0\;\hbox{if}\; d\ge 2 ;\quad  n_3=\Im \frac{\lambda}{\lambda-1}\;\hbox{if}\; d\ge 3.
\end{split}
\right.
\end{equation*}
\noindent From these computations, we deduce that, if $F=B_{e^{2\pi i\omega},a,d}$,  
$$\tilde g(z)=|z|^2\Im z+O(|z|^6).$$

\subsection{Non unicity of formal normal forms}\label{Nonu}
\noindent As we have already noticed in the special case, normal forms are not unique but, even in the general case, this non unicity is mild, more precisely:
\begin{lemma}[Non unicity of normal form]\label{Birk}
If $\lambda=e^{2\pi i\omega}$ with $\omega$ irrational, then any two
formal normal forms of the same formal diffeomorphism
$$N_1(z)=\lambda z\bigl(1+\sum_{k\ge 1}\alpha_k|z|^{2k})\quad\hbox{and}\quad N_2(z)=\lambda z\bigl(1+\sum_{k\ge 1}\beta_k|z|^{2k})\bigr)$$
are formally conjugated by a formal diffeomorphism of the form 
$$H(z)=z\bigl(1+h(|z|^2)\bigr)=z+\sum_{l\ge 1}h_l|z|^{2l}z,\quad h_l\in \C.$$
Moreover, 
the first non vanishing coefficient $\alpha_{k_0}$ of $N_1$ and the first non vanishing coefficient $\beta_{l_0}$ of $N_2$ coincide:
$$l_0=k_0\quad \text{and}\quad \beta_{k_0}=\alpha_{k_0}.$$
\end{lemma}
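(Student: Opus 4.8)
The plan is to show first that one may choose the conjugacy between $N_1$ and $N_2$ to be invariant under the circle action $R_t\colon z\mapsto e^{2\pi i t}z$ ($t\in\R/\Z$), and then to read off the relation between the leading coefficients from the conjugacy equation. The gain in the first step is that a formal diffeomorphism $H$ tangent to the identity which satisfies $R_{-t}\circ H\circ R_t=H$ for all $t$ is automatically of the announced form $H(z)=z\bigl(1+h(|z|^2)\bigr)$, $h(u)=\sum_{l\ge1}h_lu^l$: among the monomials $z^p\bar z^q$ only those with $p-q=1$, i.e.\ the $|z|^{2l}z$, survive the averaging over $t$, and the homogeneous components of $H$ of even degree are then forced to vanish. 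To get started, note that, $N_1$ and $N_2$ being normal forms of one and the same formal diffeomorphism, there is at least \emph{some} tangent-to-identity formal diffeomorphism $H_0$ with $H_0\circ N_1=N_2\circ H_0$ (if $\Phi_i\circ F\circ\Phi_i^{-1}=N_i$ with $\Phi_i$ tangent to the identity, take $H_0=\Phi_2\circ\Phi_1^{-1}$).

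The heart of the matter is that both $N_1$ and $N_2$ \emph{commute} with every rotation $R_t$, since $N_j(e^{2\pi i t}z)=e^{2\pi i t}N_j(z)$ is visible on their expression; hence $R_{-t}\circ H_0\circ R_t$ is again a conjugacy from $N_1$ to $N_2$, for every $t$. One then invokes the standard averaging argument for conjugacies of germs that are equivariant under the action of a compact group. Solving $H\circ N_1=N_2\circ H$ degree by degree, the homogeneous component $H_d$ of degree $d$ obeys a homological equation $L\circ H_d-H_d\circ L=R_d$, where $L$ is the common linear part $z\mapsto\lambda z$ and $R_d$ depends only on the components of degree $<d$ and on $N_1,N_2$; the corresponding operator is diagonal in the monomial basis, annihilates exactly the resonant monomials $|z|^{2l}z$ and, by \eqref{eqNRlambda}, is invertible on all the others, and it commutes with the conjugation by $R_t$. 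Since $H_0$ furnishes a solution at each degree, and since — $N_1$ and $N_2$ being $R_t$-equivariant — the right-hand sides $R_d$ stay $R_t$-invariant as soon as the lower-degree components have been chosen $R_t$-invariant, one may average each chosen $H_d$ over $t\in\R/\Z$ without spoiling the subsequent steps. This is where the compactness of the circle group really enters; the rest is formal bookkeeping. One thus obtains an $R_t$-invariant conjugacy, that is, the sought $H(z)=z\bigl(1+h(|z|^2)\bigr)$.

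For the last assertion, put $u=|z|^2$, $A(u)=\sum_{k\ge1}\alpha_ku^k$, $B(u)=\sum_{k\ge1}\beta_ku^k$, $U(u)=|1+A(u)|^2$ and $V(u)=|1+h(u)|^2$, so that $|N_1(z)|^2=uU(u)$ and $|H(z)|^2=uV(u)$, and divide $H\circ N_1=N_2\circ H$ by $\lambda z$:
\[
\bigl(1+A(u)\bigr)\bigl(1+h(uU(u))\bigr)=\bigl(1+h(u)\bigr)\bigl(1+B(uV(u))\bigr).
\]
Let $k_0$ and $l_0$ be the valuations of $A$ and $B$ (with $k_0=l_0=\infty$ if $N_1$, equivalently $N_2$, is linear). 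Since $U(u)=1+O(u^{k_0})$ one has $h(uU(u))=h(u)+O(u^{k_0+1})$; cancelling the common summand $1+h(u)$ and using that $h$ has valuation $\ge1$, the left-hand side reduces to $A(u)\bigl(1+h(u)\bigr)+O(u^{k_0+1})=\alpha_{k_0}u^{k_0}+O(u^{k_0+1})$, and the right-hand side reduces to $B(uV(u))\bigl(1+h(u)\bigr)=\beta_{l_0}u^{l_0}+O(u^{l_0+1})$. Comparing the terms of lowest order forces $l_0=k_0$ and $\beta_{k_0}=\alpha_{k_0}$, which is the claim. The only delicate point in the whole argument is the equivariant averaging of the second paragraph; everything else is a routine comparison of formal expansions.
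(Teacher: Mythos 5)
Your reduction of the last assertion (dividing $H\circ N_1=N_2\circ H$ by $\lambda z$ and comparing lowest orders in $u=|z|^2$) is correct and even a little slicker than the paper's way of extracting $l_0=k_0$, $\beta_{k_0}=\alpha_{k_0}$; but it presupposes the first assertion, and that is where your argument has a genuine gap. In the degree-by-degree averaging scheme you need, at each degree $d$, a solution of $L\circ H_d-H_d\circ L=R_d$ where $R_d$ is computed from the \emph{already averaged} lower-degree components; the justification ``since $H_0$ furnishes a solution at each degree'' does not give this, because the components of $H_0$ solve the equations whose right-hand sides are computed from $H_0$'s \emph{own} lower-degree components, and $R_d$ depends nonlinearly on those components (averaging does not commute with composition). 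Worse, your own observations show that solvability after averaging is not a formality: the image of the homological operator is spanned by the non-resonant monomials, while, once the lower components are $R_t$-invariant, $R_d$ is supported on the resonant monomials $|z|^{2l}z$; hence solvability is equivalent to $R_d=0$, and this vanishing is exactly the compatibility of the resonant parts of $N_1$ and $N_2$ that the lemma is about. As written, the ``standard equivariant averaging'' is therefore an appeal to precisely the point that has to be proved (a rigorous version would amount to showing that the cocycle $t\mapsto H_0^{-1}\circ R_{-t}\circ H_0\circ R_t$, with values in the centralizer of $N_1$, is a coboundary, which is not shorter).

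The gap closes, and the averaging becomes superfluous, by running the induction on $H_0$ itself, which is what the paper does: assuming the components of $H_0$ of degree $<d$ contain only resonant monomials, the degree-$d$ equation has a purely resonant right-hand side while the terms involving the degree-$d$ coefficients $h_{pq}$ with $p\neq q+1$ are purely non-resonant; so, using \eqref{eqNRlambda}, all non-resonant $h_{pq}$ of degree $d$ vanish (at even degree all of them, there being no resonant monomials of even degree), and the resonant part of the equation, which does not involve the degree-$d$ unknowns at all, gives at degree $2m+3$ the identity $\alpha_{m+1}+\varphi(\alpha_1,\dots,\alpha_m)=\beta_{m+1}+\psi(\beta_1,\dots,\beta_m)$ with $\varphi,\psi$ without constant term, whence the statement on the first non-vanishing coefficients. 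In other words, \emph{any} tangent-to-identity conjugacy between the two normal forms is automatically of the form $z\bigl(1+h(|z|^2)\bigr)$; no equivariant selection is needed, and establishing that fact is the actual content of the first assertion.
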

\begin {proof} Let $H(z)=z+\sum_{p+q\ge 2}h_{pq}z^p\bar z^q$ be such that $H\circ N_1=N_2\circ H$. Identifying degree 2 terms in this identity implies that
$(\lambda^p\bar\lambda^q-\lambda)h_{pq}=0$ for all $p$ and $q$ such that $p+q=2$. 
Moreover, degree 3 terms satisfy
$$\lambda \alpha_1|z|^2z+\sum_{p+q=3}\lambda^p\bar\lambda^qh_{pq}=\lambda\beta_1|z|^2z+\lambda\sum_{p+q=3}h_{pq}\, ,$$
from which it follows that $\alpha_1=\beta_1$ and $h_{pq}=0$ for all $(p,q)\not=(2,1)$ such that $p+q=3$.
Let us suppose by induction that
$$H(z)=z\bigl(1+\sum_{l=1}^mh_l|z|^{2l}\bigr)+\sum_{p+q\ge 2m+2}h_{pq}z^p\bar z^q.\eqno{(H_m)}$$
As all the terms of  $N_1(z)\left(1+\sum_{l=1}^mh_{l}|N_1(z)|^{2l}\right)$ are of odd degree, the only terms of degree $2m+2$ in $H\circ N_1(z)$ are $\sum_{p+q=2m+2}\lambda^p\bar\lambda^qh_{pq}z^p\bar z^q$.
\smallskip

\noindent Similarly, all the terms of $N_2\left(z\bigl(1+\sum_{l=1}^mh_l|z|^{2l}\bigr)\right)$ being of odd degree, the only terms of degree $2m+2$ in $N_2\circ H(z)$ are $\lambda\sum_{p+q=m+2}h_{pq}z^p\bar z^q$. 
\smallskip

\noindent One deduces that $h_{pq}=0$ for all $p$ and $q$ such that $p+q=2m+2$. Hence
$$H(z)=z\bigl(1+\sum_{l=1}^mh_l|z|^{2l}\bigr)+\sum_{p+q\ge 2m+3}h_{pq}z^p\bar z^q.$$
Terms of degree $2m+3$ in $H\circ N_1(z)$ are the ones of
$$N_1(z)\left(1+\sum_{l=1}^mh_l|N_1(z)|^{2l}\right)+\sum_{p+q=2m+3}\lambda^p\bar\lambda^qh_{pq}z^p\bar z^q$$
and those of $N_2\circ H(z)$ are the ones of
$$N_2\left(z\bigl(1+\sum_{l=1}^mh_l|z|^{2l}\bigr)\right)+\lambda\sum_{p+q=2m+3}h_{pq}z^p\bar z^q.$$
One deduces that
\begin{equation*}
\begin{split}
&\lambda\bigl(\alpha_{m+1}+\varphi(\alpha_1,\ldots,\alpha_m)\bigr)|z|^{2m+2}z+\sum_{p+q=2m+3}\lambda^p\bar\lambda^qh_{pq}z^p\bar z^q\\
=&
\lambda\bigl(\beta_{m+1}+\psi(\beta_1,\ldots,\beta_m)\bigr)|z|^{2m+2}z+\lambda\sum_{p+q=2m+3}h_{pq}z^p\bar z^q,
\end{split}
\end{equation*}
where $\varphi$ and $\psi$ are polynomials without constant term. It follows that $(H_{m+1})$ is verified and that $\alpha_{m+1}=\beta_{m+1}$ if all the $\alpha_k$ and the $\beta_k$ vanish for $k\le m$, which concludes the proof. 
\end {proof}

\begin{corollary}\label{any}
Under the hypotheses of lemma \ref{pres}, any formal conjugacy $\Psi$ of $F$ to a normal form preserves the foliation ${\cal F}_0$.
\end{corollary}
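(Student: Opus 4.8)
The plan is to factor an arbitrary conjugacy through the basic special conjugacy $\Phi^*$ of Definition~\ref{basic} and then invoke Lemma~\ref{Birk}. Two preliminary remarks: since $\Phi^*(z)=ze^{2\pi i\varphi^*(z)}$ with $\varphi^*$ of valuation at least~$1$, its linear part is the identity; and by Lemma~\ref{pres} it preserves each circle centered at~$0$, in particular it preserves ${\cal F}_0$.

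So let $\Psi$ be any formal conjugacy of $F$ to a normal form, say $\Psi\circ F=N\circ\Psi$ with $N(z)=\lambda z\bigl(1+\sum_{k\ge1}\alpha_k|z|^{2k}\bigr)$. First I would inspect the degree-one part of the identity $\Psi\circ F=N\circ\Psi$: it shows that $d\Psi(0)$ commutes with $dF(0):z\mapsto\lambda z$. Since $\omega$ is irrational, $\lambda\notin\{1,-1\}$, hence $\lambda\notin\R$, and the only $\R$-linear self-maps of $\C$ commuting with $z\mapsto\lambda z$ are the $\C$-linear maps $L_c:z\mapsto cz$; therefore $d\Psi(0)=L_c$ for some $c\in\C^*$. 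Next I would set $H:=L_c^{-1}\circ\Psi\circ(\Phi^*)^{(-1)}$, which is tangent to the identity. Using $\Psi\circ F=N\circ\Psi$ together with $\Phi^*\circ F=N^*\circ\Phi^*$, a direct computation gives $H\circ N^*=N'\circ H$ where $N':=L_c^{-1}\circ N\circ L_c$. Since both $N$ and $L_c$ commute with the rotation $z\mapsto\lambda z$, so does $N'$; explicitly $N'(z)=\lambda z\bigl(1+\sum_{k\ge1}\alpha_k|c|^{2k}|z|^{2k}\bigr)$, so $N'$ is again a normal form of~$F$ (it is formally conjugate to $N$, hence to $F$).

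Thus $H$ is a formal conjugacy, tangent to the identity, between the two normal forms $N^*$ and $N'$. Lemma~\ref{Birk} then applies: more precisely, the computation carried out in its proof shows that \emph{every} such conjugacy, not merely some, has the form $H(z)=z\bigl(1+h(|z|^2)\bigr)$ with $h\in\C[[u]]$, $h(0)=0$; in particular $H$ preserves ${\cal F}_0$. Finally $\Psi=L_c\circ H\circ\Phi^*$ is a composition of three formal diffeomorphisms each of which preserves ${\cal F}_0$, and that property is stable under composition: if $|G_1(z)|^2=\phi_1(|z|^2)$ and $|G_2(z)|^2=\phi_2(|z|^2)$ then $|G_1\circ G_2(z)|^2=\phi_1\bigl(\phi_2(|z|^2)\bigr)$. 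Hence $\Psi$ preserves ${\cal F}_0$.

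The only point requiring some care is the bookkeeping around the linear part: one must not assume $d\Psi(0)$ is the identity, and one must check that conjugating $N$ by $L_c$ keeps it within the class of normal forms so that Lemma~\ref{Birk} is legitimately applicable to $H$. Everything else is routine manipulation of formal power series.
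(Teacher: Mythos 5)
Your proof is correct and takes essentially the same route as the paper: factor the given conjugacy through the basic one as $\Psi=H\circ\Phi^*$, observe that $H$ then conjugates the basic normal form to another normal form, and use the computation in the proof of Lemma~\ref{Birk} to see that $H(z)=z\bigl(1+h(|z|^2)\bigr)$ (up to the linear factor), so that $\Psi$ preserves ${\cal F}_0$ as a composition of foliation-preserving maps. Your explicit handling of the linear part $L_c$ is a harmless refinement of the paper's terser argument (which implicitly reduces to tangent-to-identity conjugacies), not a genuinely different approach.
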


\noindent Indeed, writing 
\[H(z)=z\bigl(1+h(|z|^2)\bigr)=z\bigl(1+a(|z|^2)\bigr)e^{2\pi ib(|z|^2)},\]where $a$ and $b$ are real series, it follows from lemmas \ref{pres} and \ref{Birk} that, once the basic normal form 
$$N^*(z)=\Phi^*\circ F\circ {\Phi^*}^{-1}(z)=\lambda z(1+f(|z|^2))e^{2\pi i n^*(|z|^2)},\quad \Phi^*(z)=ze^{2\pi i\varphi^*(z)},$$ is known,  the most general conjugacy of $F$ to a normal form is a composition
$$\Psi(z)=H\circ\Phi^*(z)=z\bigl(1+a(|z|^2)\bigr)e^{2\pi i\bigl(\varphi^*(z)+b(|z|^2)\bigr)}$$
where $a(X)$ and $b(X)$ are arbitrary real formal series in one variable without constant term.
\smallskip

A direct computation leads to 
\begin{lemma}\label{general-nf}
Corresponding to a general formal conjugacy $\Psi=H\circ \Phi^*$ as above, the most general normal form for $F$ is
$$N(z)=\Psi\circ F\circ\Psi^{-1}(z)=\lambda z\bigl(1+\alpha(|z|^2)\bigr)e^{2\pi i\beta(|z|^2)},$$
where $\alpha$ and $\beta$ are given by the following formulas
\begin{equation*}
\begin{split}
1+\alpha(|z|^2)&=\bigl(1+f(|H^{-1}(z)|^2)\bigr)\frac{1+a\bigl(|F\circ H^{-1}(z)|^2\bigr)}{1+a(|H^{-1}(z)|^2)}\, ,\\
\beta(|z|^2)&=n^*(|H^{-1}(z)|^2)+b\bigl(|F\circ H^{-1}(z)|^2\bigr)-b(|H^{-1}(z)|^2),\; \hbox{with}\\
|H(u)|^2&=|u|^2\bigl(1+a(|u|^2)\bigr)^2,\;\hbox{hence}\; |H^{-1}(z)|^2=|z|^2\bigl(1+\rho(|z|^2)\bigr).
\end{split}
\end{equation*}
\end{lemma}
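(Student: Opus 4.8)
The plan is to obtain $N = \Psi\circ F\circ\Psi^{-1}$ by a completely explicit computation, using the fact established in Corollary~\ref{any} that every conjugacy to a normal form has the product form $\Psi(z)=z\bigl(1+a(|z|^2)\bigr)e^{2\pi i\bigl(\varphi^*(z)+b(|z|^2)\bigr)}=H\circ\Phi^*$. First I would record the conjugation identity in the equivalent form $N\circ\Psi = \Psi\circ F$, and evaluate both sides at the point $w=\Psi^{-1}(z)$, so that the identity becomes $N(z)=\Psi\bigl(F(w)\bigr)$ with $w=\Psi^{-1}(z)$. Since $\Psi=H\circ\Phi^*$ and $N^*=\Phi^*\circ F\circ(\Phi^*)^{-1}$ is the basic normal form, one has $\Psi\circ F = H\circ N^*\circ\Phi^*$, hence $N = H\circ N^*\circ H^{-1}$; this reduces the statement to conjugating the already-known special normal form $N^*(z)=\lambda z(1+f(|z|^2))e^{2\pi i n^*(|z|^2)}$ by the purely radial diffeomorphism $H(z)=z\bigl(1+a(|z|^2)\bigr)e^{2\pi i b(|z|^2)}$.

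Next I would compute $H^{-1}$ at the level of squared moduli. Because $H$ preserves ${\cal F}_0$, we have $|H(u)|^2=|u|^2\bigl(1+a(|u|^2)\bigr)^2$, a formal series in $|u|^2$ with derivative $1$ at the origin; inverting it gives a formal series identity $|H^{-1}(z)|^2=|z|^2\bigl(1+\rho(|z|^2)\bigr)$ for a uniquely determined real series $\rho$ without constant term, which is exactly the last displayed formula in the statement. Then I would substitute: writing $u=H^{-1}(z)$, the modulus of $N(z)=H\bigl(N^*(u)\bigr)$ satisfies $|N(z)|=|N^*(u)|\bigl(1+a(|N^*(u)|^2)\bigr)$, and since $N^*$ preserves each circle as $F$ does, $|N^*(u)|^2=|u|^2\bigl(1+f(|u|^2)\bigr)^2=|F\circ H^{-1}(z)|^2$. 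Combining, $1+\alpha(|z|^2)=\dfrac{|N(z)|}{|z|}$ factors as $\bigl(1+f(|u|^2)\bigr)\,\dfrac{1+a(|N^*(u)|^2)}{1+a(|u|^2)}$, which is the first asserted formula once one notes $|N^*(u)|^2=|F\circ H^{-1}(z)|^2$ and $|u|^2=|H^{-1}(z)|^2$. For the angular part, the multiplicative structure of $H$, $N^*$ and the additivity of phases give $\arg N(z)=\arg N^*(u)+2\pi b(|N^*(u)|^2)$ and $\arg N^*(u)=\arg(\lambda u)+2\pi n^*(|u|^2)$, while $\arg(\lambda z)=\arg(\lambda u)+2\pi b(|u|^2)$ because $z=H(u)$; subtracting $\arg(\lambda z)$ yields $\beta(|z|^2)=n^*(|u|^2)+b(|N^*(u)|^2)-b(|u|^2)$, which is the second formula.

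The only genuine point requiring a word of care—and the step I expect to be the main obstacle, though it is mild—is checking that $\beta$ really depends on $|z|^2$ alone, i.e.\ that the right-hand side, a priori a function of $u=H^{-1}(z)$, is a function of $|u|^2$ and hence of $|z|^2$. This is immediate here because $n^*$, $b$, and $|{\cdot}|^2\circ N^*\circ H^{-1}$ are all functions of $|z|^2$ by construction; but it is the reason the computation "closes up" and produces a bona fide normal form of the asserted shape rather than merely a conjugate diffeomorphism. I would also remark that the nonzero leading coefficient of $\alpha$ coincides with that of $f$ (by Lemma~\ref{Birk}, or directly: the correction factor $\frac{1+a(|N^*(u)|^2)}{1+a(|u|^2)}$ has valuation strictly larger than that of $f$, since $|N^*(u)|^2-|u|^2$ already has valuation $>$ that of $f$), so that the "torsion-free radial part" is preserved, consistent with Lemma~\ref{n-pol}. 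A final line then assembles $N(z)=\lambda z\bigl(1+\alpha(|z|^2)\bigr)e^{2\pi i\beta(|z|^2)}$, completing the proof.
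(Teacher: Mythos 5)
Your computation is correct and is exactly the ``direct computation'' the paper alludes to (it offers no further proof): factoring $N=\Psi\circ F\circ\Psi^{-1}=H\circ N^*\circ H^{-1}$, inverting $|H(u)|^2=|u|^2\bigl(1+a(|u|^2)\bigr)^2$ to get $\rho$, and separating modulus and phase, using $|N^*(u)|=|F(u)|$, gives precisely the stated $\alpha$ and $\beta$. Your closing observations (that $\beta$ is indeed a function of $|z|^2$ alone, and that the leading coefficient of $\alpha$ equals that of $f$) are correct and consistent with Lemmas \ref{pres} and \ref{Birk}.
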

\smallskip

\noindent {\bf Remarks.}
\smallskip

\noindent {\bf 1)} If the conjugacies $\Phi_1$ and $\Phi_2$ are special, the composition $H=\Phi_2\circ\Phi_1^{-1}$ must preserve individually each circle: $H(z)=ze^{2\pi ib(|z|^2)}$. Hence the corresponding special normal forms $N_k(z)=\lambda z\bigl(1+f(|z|^2)\bigr)e^{2\pi in_k(|z|^2)},\, k=1,2$, satisfy
$$n_2(|z|^2)-n_1(|z|^2)=b\bigl(|F(z)|^2\bigr)-b(|z|^2).$$
\smallskip

\noindent {\bf 2)} If $f\equiv 0$, i.e.\ $|F(z)|=|z|$, a case which we
shall call {\it conservative}, then $\alpha\equiv 0$ and
$\beta(|z|^2) = n^*(|H^{-1}(z)|^2)$. This implies that
$\beta$ can be chosen to be a polynomial and even a monomial: indeed,
if $n^*(X)=n_pX^p+O(|X|^{p+1})$ with $n_p\neq0$, we can write
$\frac{n^*(r^2)}{n_pr^{2p}} = \big(1+a^*(r^2)\big)^{2p}$ with a
suitable real series~$a^*$ and, by choosing $a=a^*$ in~$H$ (with
any~$b$), we get 
$n^*(r^2) = n_p \big( r (1+a(r^2)) \big)^{2p}$,
whence $\beta(r^2)=n_pr^{2p}$.  \smallskip

\noindent Hence, {\it if $F$ is conservative, there always exist a non conservative formal transformation to a convergent normal form $N(z)=\lambda ze^{2\pi i|z|^{2p}}$.}

\smallskip

\noindent {\bf 3)} In general, even if~$f$ is not identically~$0$, we
can always achieve $\beta(r^2)=n_pr^{2p}$, i.e.\ an ``angularly
polynomial normal form'', by choosing $a=a^*$ as above and
$b=0$.
However, the resulting normal form $N(z)=\lambda z(1+\alpha(|z|^2))e^{2\pi
  i|z|^{2p}}$ is not convergent if~$n^*$ is not convergent.
\smallskip

\begin{definition}\label{RCconj}
A formal conjugacy $\Psi$ (resp. a normal form $\Psi\circ F\circ\Psi^{-1}$) such that $H$ (or what is equivalent, the series $a$ and $b$) converge will be called an RC (resonant part convergent) formal conjugacy (resp. RC normal form).
\end{definition}

\begin{lemma}\label{normalization}
The three properties: $F$ admits a convergent normalization, $F$
admits a convergent RC normalization, every RC-normalization of $F$ is
convergent, are equivalent.
\smallskip

\noindent In particular, if its basic normalization~$\Phi^*$ is divergent, then
all normalizations of~$F$ are divergent.
\end{lemma}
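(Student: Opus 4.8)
The plan is to funnel all three properties through the single statement ``the basic normalization $\Phi^*$ converges'', and then to close a short cycle of implications around it. First I would record two elementary facts: (i) $\Phi^*$ is itself an RC normalization, since in the parametrization $\Psi=H\circ\Phi^*$ of all normalizations of~$F$ (Corollary~\ref{any} together with the discussion following it) the choice $\Psi=\Phi^*$ corresponds to $H=\mathrm{id}$, i.e.\ $a\equiv b\equiv0$, which converge trivially; and (ii) if $\Phi^*$ converges, then \emph{every} RC normalization $\Psi=H\circ\Phi^*$ converges, being the substitution of the convergent germ $\Phi^*$ (with $\Phi^*(0)=0$) into the convergent germ $H$. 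Together with the trivial implication ``a convergent RC normalization is a convergent normalization'', (i) and (ii) yield the chain
$$\Phi^*\text{ converges}\ \Longrightarrow\ \text{(3)}\ \Longrightarrow\ \text{(2)}\ \Longrightarrow\ \text{(1)}.$$
Hence the whole lemma — and, by contraposition, its ``in particular'' clause — will follow once I prove $\text{(1)}\Rightarrow\Phi^*\text{ converges}$.

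For that implication I would start from a convergent normalization~$\Psi$ of~$F$. By Corollary~\ref{any} and the bookkeeping recalled after it, $\Psi$ preserves ${\cal F}_0$ and can be written $\Psi(z)=z\bigl(1+a(|z|^2)\bigr)e^{2\pi i(\varphi^*(z)+b(|z|^2))}$, with $a,b$ real one-variable series without constant term and $\varphi^*$ the angular component of $\Phi^*$. The key point is that recovering $\Phi^*$ from $\Psi$ only requires isolating $\varphi^*$, and that, $\omega$ being irrational, $\varphi^*(z)$ and $b(|z|^2)$ are respectively \emph{exactly} the non-resonant and the resonant parts of the single series $\varphi^*(z)+b(|z|^2)$, the resonant monomials $z^p\bar z^q$ being the $|z|^{2p}$ by~\eqref{eqNRlambda}. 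I would then argue as follows. Since $\Psi$ converges and $\Psi(z)/z=1+O(|z|)$, the germ $\log(\Psi(z)/z)$ is well defined and convergent, and it equals $\log(1+a(|z|^2))+2\pi i\,\varphi^*(z)+2\pi i\,b(|z|^2)$ at the level of convergent germs (all the series on the right start at order~$\ge1$, so the principal branch of $\log$ applies). Taking imaginary parts, the germ $\varphi^*(z)+b(|z|^2)$ converges; omitting from this series the resonant monomials $|z|^{2p}$ then leaves the convergent germ $\varphi^*(z)$, because deleting a subset of monomials from an absolutely convergent power series preserves absolute convergence on the same polydisc. Therefore $\Phi^*(z)=ze^{2\pi i\varphi^*(z)}$ is a convergent normalization of~$F$, closing the cycle.

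The one genuinely substantive ingredient is this ``separation of the resonant part'' step; everything else is formal bookkeeping, together with the standard stability of convergence under substitution, quotient and $\log$. If needed, I would also spell out why the principal branch applies in the displayed identity, and note that taking \emph{real} parts there makes $\log(1+a(|z|^2))$, hence $a$, and then $b$, converge as well, so that in fact every convergent normalization is automatically RC — a refinement not required for the statement itself.
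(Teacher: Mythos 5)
Your proof is correct, and it is organized along a genuinely different route than the paper's. The paper disposes of the lemma in one sentence: in $\Psi=H\circ\Phi^*$ the resonant terms $z|z|^{2s}$ are attributed to $H$, and the Cauchy--Hadamard formula (convergence passes to sub-series) then yields convergence of $H$, hence of $\Phi^*$, from that of $\Psi$. You instead funnel all three properties through ``$\Phi^*$ converges'' and, for the only substantive implication (from the existence of a convergent normalization to convergence of $\Phi^*$), you first take $\log(\Psi(z)/z)$, then its imaginary part, and only then delete the resonant monomials; at that stage the decomposition of $\varphi^*(z)+b(|z|^2)$ into non-resonant and resonant parts is exact thanks to \eqref{eqNRlambda}, so sub-series extraction gives $\varphi^*$, hence $\Phi^*$, convergent. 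This buys something: in the paper's phrasing the resonant monomials of $\Psi$ itself are \emph{not} contributed by $H$ alone (already $e^{2\pi i\varphi^*}$ produces $|z|^2$ from $\varphi_{10}z\cdot\varphi_{01}\bar z$), so the one-line argument needs interpretation, whereas taking the logarithm first makes the separation literal while still resting on the same sub-series principle. Your closing remark that real parts give convergence of $a$ and then of $b$, so that every convergent normalization is automatically RC, is a correct refinement in the spirit of the paper. Two small wording points: the identity $\log(\Psi/z)=\log(1+a)+2\pi i(\varphi^*+b)$ should be asserted as an identity of \emph{formal} series (its right-hand side is not yet known to converge at that moment), and ``imaginary part'' should be read as the coefficientwise symmetrization $\frac{1}{2i}(S-\bar S)$, which indeed preserves convergence.
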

\begin{proof}
It follows from the observation that  the terms $z|z|^{2s}$ in $\Psi=H\circ\Phi^*$ originate only from $H$, the Cauchy-Hadamard formula for the radius of convergence of a formal series in several variables implies that the convergence of 
$\Psi$ implies the ones of $H$ and $\Phi^*$. 
\end{proof}

\medskip

\noindent Hence, one can restrict the discussion of convergence to RC-normal forms and even to the basic special one $N^*(z)$.  
Notice that it is not yet known whether a polynomial normal form is an RC-normal form but this seems unlikely.

\begin{definition}\label{presfol} Let $G:(\R^2,0)\to(\R^2,0)$ be a formal diffeomorphism; we shall say that the formal diffeomorphism $F$ preserves the ``formal foliation" ${\cal F}=G^{-1}({\cal F}_0)$ if  $G\circ F\circ G^{-1}$ preserves ${\cal F}_0$. 
\end{definition}

\begin{corollary}\label{one}  A formal diffeomorphism $F$ whose derivative $dF(0)$ is an irrational rotation cannot preserve more than one formal foliation.
\end{corollary}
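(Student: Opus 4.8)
The plan is to reduce everything to Corollary~\ref{any}. Suppose $F$ preserves two formal foliations, ${\cal F}_1=G_1^{-1}({\cal F}_0)$ and ${\cal F}_2=G_2^{-1}({\cal F}_0)$; by Definition~\ref{presfol} this means that $F_1:=G_1\circ F\circ G_1^{-1}$ and $G_2\circ F\circ G_2^{-1}$ both preserve ${\cal F}_0$. Put $G:=G_2\circ G_1^{-1}$, so that $G\circ F_1\circ G^{-1}=G_2\circ F\circ G_2^{-1}$ preserves ${\cal F}_0$ as well; thus $F_1$ preserves both ${\cal F}_0$ and $G^{-1}({\cal F}_0)$. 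Since ${\cal F}_2=G_1^{-1}\bigl(G^{-1}({\cal F}_0)\bigr)$ while ${\cal F}_1=G_1^{-1}({\cal F}_0)$, proving ${\cal F}_1={\cal F}_2$ amounts to proving that $G$ preserves ${\cal F}_0$. So it suffices to show: if a formal diffeomorphism $F_1$ with $dF_1(0)$ an irrational rotation preserves both ${\cal F}_0$ and $G^{-1}({\cal F}_0)$, then $G$ preserves ${\cal F}_0$.

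First I would check that both $F_1$ and $G\circ F_1\circ G^{-1}$ satisfy the hypotheses of Lemma~\ref{pres}. They preserve ${\cal F}_0$ by construction. For the linear parts: any formal diffeomorphism $H$ preserving ${\cal F}_0$ satisfies ``$|H(z)|^2$ depends only on $|z|^2$'', so comparing the degree-two terms in $z,\bar z$ forces $dH(0)$ to be a similarity of $\C$; since conjugation does not change eigenvalues, the eigenvalues of $dF_1(0)$ and of $d(G\circ F_1\circ G^{-1})(0)$ are still $e^{\pm 2\pi i\omega}$ with $\omega$ irrational, and a similarity of $\C$ whose eigenvalues are non-real is a rotation — here of irrational angle, hence non-periodic. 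So Lemma~\ref{pres} and Corollary~\ref{any} apply to $F_1$ and to $G\circ F_1\circ G^{-1}$.

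Now apply Lemma~\ref{pres} to $G\circ F_1\circ G^{-1}$: there is a special formal conjugacy $\Phi^*$, preserving each circle centered at $0$, such that $N:=\Phi^*\circ(G\circ F_1\circ G^{-1})\circ(\Phi^*)^{-1}$ is a normal form. Then $\Psi:=\Phi^*\circ G$ is a formal conjugacy of $F_1$ to the normal form $N$, so by Corollary~\ref{any} (applied to $F_1$) the map $\Psi$ preserves ${\cal F}_0$. Since $\Phi^*$ preserves each circle, and hence ${\cal F}_0$, so does $(\Phi^*)^{-1}$, and therefore $G=(\Phi^*)^{-1}\circ\Psi$ preserves ${\cal F}_0$. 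This gives ${\cal F}_1={\cal F}_2$, i.e.\ $F$ preserves at most one formal foliation.

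I do not expect a genuine obstacle here: the substance is already contained in Corollary~\ref{any}, the content being that two systems of normalizing coordinates for a map preserving ${\cal F}_0$ differ by a ${\cal F}_0$-preserving diffeomorphism. The one point deserving care is the verification that the linear parts occurring above are honest rotations, so that the formal normalization machinery of Lemma~\ref{pres} — and with it Corollary~\ref{any} — is legitimately applicable; this is the elementary ``similarity with non-real eigenvalues'' remark made in the second paragraph.
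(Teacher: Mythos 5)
Your proof is correct and takes essentially the same route as the paper's: both arguments boil down to Corollary~\ref{any}, you applying it once to see that the transition map $G_2\circ G_1^{-1}$ preserves ${\cal F}_0$ (after producing a special normalization via Lemma~\ref{pres}), while the paper applies it twice to a single conjugacy $\Psi$ of $F$ viewed through $G_1$ and $G_2$. Your explicit check that the relevant linear parts are non-periodic rotations (a similarity with non-real unimodular eigenvalues) is a detail the paper leaves implicit, but the substance is identical.
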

\begin{proof} 
If $F$ preserves ${\cal F}_1=G_1^{-1}({\cal F}_0)$ and ${\cal F}_2=G_2^{-1}({\cal F}_0)$, and $\Psi$ is a formal conjugacy of $F$  to a normal form $N$, corollary \ref{any} implies that the formal diffeomorphisms $\Psi\circ G_1^{-1}$ and $\Psi\circ G_2^{-1}$ must both preserve ${\cal F}_0$. This means that $\Psi$ sends both ${\cal F}_1$ and ${\cal F}_2$ on ${\cal F}_0$, hence that 
${\cal F}_1={\cal F}_2=\Psi^{-1}({\cal F}_0).$
\end{proof}
\medskip

\noindent The case $F$ a pure homothety is the simplest counter-example to Corollary \ref{one} when the hypothesis on $dF(0)$ is not satisfied.
\goodbreak

\section{Polynomial normal forms}\label{pol}
\begin{proposition}\label{propol} As soon as $F$ is a weak contraction, there exists a formal conjugacy to a polynomial normal form $N_1=\lambda zP(|z|^2)$. 
\end{proposition}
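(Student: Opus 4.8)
The plan is to reduce the statement to the conservative case already fully analyzed in the remarks following Lemma~\ref{general-nf}. Recall that ``weak contraction'' means $|\lambda|=1$, i.e. $\lambda=e^{2\pi i\omega}$ with $\omega$ irrational, and that in the notation of Definition~\ref{special} the map is $F(z)=\lambda z(1+f(|z|^2))e^{2\pi ig(z)}$ with $f$ possibly nonzero. First I would pass to the basic special normal form $N^*(z)=\lambda z(1+f(|z|^2))e^{2\pi in^*(|z|^2)}$ provided by Lemma~\ref{pres} and Definition~\ref{basic}; by Lemma~\ref{normalization} it suffices to produce a further \emph{formal} conjugacy of $N^*$ (hence of $F$) to a map of the desired polynomial form, so from now on I may assume $F=N^*$ already preserves each circle individually.

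The core step is Remark~3 after Lemma~\ref{general-nf}: choosing $a=a^*$ with $\big(1+a^*(r^2)\big)^{2p}=n^*(r^2)/(n_pr^{2p})$ (where $n^*(X)=n_pX^p+O(X^{p+1})$, $n_p\neq0$) and $b=0$ in $H(z)=z(1+a(|z|^2))e^{2\pi ib(|z|^2)}$, one obtains via the formulas of Lemma~\ref{general-nf} a normal form $N(z)=\lambda z\big(1+\alpha(|z|^2)\big)e^{2\pi i|z|^{2p}}$ whose angular part is the monomial $|z|^{2p}$. This already disposes of the angular component. If instead $n^*\equiv0$, then $F$ is formally linearizable in the appropriate sense and one takes $N_1(z)=\lambda z$; I would treat that degenerate case separately at the outset.

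It remains to polynomialize the radial factor $1+\alpha(|z|^2)$. Here I invoke Lemma~\ref{Birk}: two normal forms $\lambda z(1+\sum\alpha_k|z|^{2k})$ and $\lambda z(1+\sum\beta_k|z|^{2k})$ of the same formal diffeomorphism are conjugated by a radial $H(z)=z(1+h(|z|^2))$, and this conjugation can be performed degree by degree. Working order by order starting from the first nonvanishing $\alpha_{k_0}$, I can solve the homological equations to kill $\alpha_k$ for all $k>k_0$ while introducing no angular terms, because a purely radial $H$ commutes with the structure in exactly the way exhibited in the proof of Lemma~\ref{Birk}; the resulting normal form is $N_1(z)=\lambda z(1+\alpha_{k_0}|z|^{2k_0})=\lambda zP(|z|^2)$ with $P$ a polynomial. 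One must check that composing this radial conjugacy with the previous $H$ does not reintroduce nonzero higher angular coefficients --- but a radial conjugacy acts on the angular component $n(|z|^2)$ only through the coboundary $b(|F(z)|^2)-b(|z|^2)$ (Remark~1 after Lemma~\ref{general-nf}), so with the radial dynamics now of the simple form $r\mapsto \rho r(1+\alpha_{k_0}r^{2k_0})$ the monomiality of the angular part can be preserved, or at worst re-established by one more application of Remark~3.

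The main obstacle I anticipate is the bookkeeping in the last paragraph: ensuring that the two successive reductions (angular monomialization, then radial polynomialization) can be carried out compatibly, i.e. that polynomializing the radial part does not destroy the angular normalization and vice versa. The clean way around this is to do everything in one induction on the total degree, at each step using the freedom in the resonant coefficients $\varphi_{pp}$ (Lemma~\ref{pres}) together with the radial freedom of Lemma~\ref{Birk}, rather than in two separate stages; the key algebraic input is simply that $\lambda^p\bar\lambda^q-1\neq0$ for $p\neq q$ (equation~\eqref{eqNRlambda}), which is what makes every non-resonant homological equation solvable and lets us steer the normal form to the target $\lambda zP(|z|^2)$.
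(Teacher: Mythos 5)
Your reduction to an angularly monomial form via Remark~3 is fine (the paper itself asserts it), but the second half of your argument contains a genuine error: you cannot ``kill $\alpha_k$ for all $k>k_0$'' and land on the binomial $N_1(z)=\lambda z(1+\alpha_{k_0}|z|^{2k_0})$. The radial component of any normal form is the one--dimensional germ $t\mapsto t\,|1+\alpha(t)|^2=t-bt^{r+1}+\cdots$ tangent to the identity, and conjugating normal forms by a radial $H(z)=z(1+h(|z|^2))$ conjugates these $1$D germs (this is exactly equation (CE1) in the paper). Such germs carry formal invariants beyond the leading coefficient: by Chen's theorem the coefficient $c$ of $t^{2r+1}$ in the reduced form $t-bt^{r+1}+ct^{2r+1}$ is a conjugacy invariant, so the order-by-order homological equation you invoke is \emph{not} solvable at order $2r+1$ --- the binomial target prescribes a specific value of $c$ which a general $F$ does not have. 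Lemma \ref{Birk} does not give you this freedom either: it only says that two normal forms of the \emph{same} diffeomorphism are radially conjugate and share their first nonvanishing coefficient; it does not say which normal forms are attainable. The same issue afflicts the angular part once you try to ``re-establish'' monomiality after a radial change: Lemma \ref{homol} shows that $\gamma\mapsto\gamma\circ F_1-\gamma$ only maps $t\R[[t]]$ onto $t^{r+1}\R[[t]]$, so the $r$-jet $G(t)$ of the angular component is a further rigid datum, not adjustable by resonant coefficients $\varphi_{pp}$; your appeal to the non-resonance condition \eqref{eqNRlambda} is beside the point, since all the obstructions live precisely in the resonant directions $|z|^{2s}z$.

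This is why the paper's proof passes to symplectic polar coordinates, solves (CE1) by Chen's formal classification (retaining the invariants $b$ and $c$), solves (CE2) via Lemma \ref{homol} (retaining the $r$-jet $G$), and then uses the remaining freedom in the tails $\rho,\sigma$ to realize $N_1(z)=\lambda zP(|z|^2)$ with $P$ the $2r$-jet of $\left[1-bt^r+ct^{2r}\right]^{1/2}e^{2\pi iG(t)}$ --- a polynomial of degree up to $2r$, not a binomial. A smaller slip: in your degenerate case $n^*\equiv 0$ with $f\not\equiv0$, the map is not formally linearizable (the radial parabolic germ is a nontrivial obstruction), so taking $N_1(z)=\lambda z$ there is also incorrect.
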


\noindent Recall remark 2 above: if $F$ is conservative, there exists a non conservative conjugacy to a normal form $N(z)=\lambda ze^{2\pi i|z|^{2p}}$ ; on the other hand, if $|\lambda|<1$, Poincar\'e has proved that there exists an analytic conjugacy of $F$ to $N=dF(0)$, that is $N(z)=\lambda z$. Note that here, as in Poincar\'e's case, no preservation of a foliation is required.
\medskip

\begin{proof}
We look for a formal conjugacy of a normal form $$N_2(z)=\lambda z\nu_2(|z|^2),\quad \hbox{such that}\quad 
|\nu_2(z)|^2=1-b|z|^{2r}+O(|z|^{2(r+1)}),\quad b>0,$$ 
to a polynomial normal form $N_1(z)=\lambda z\nu_1(|z|^2)$. The main variable being $|z|^2$, the use of symplectic polar coordinates
$z=t^{\frac{1}{2}}e^{2\pi i\theta}$ is mandatory.  The two normal forms $N_i, i=1,2$, become
$$(t,\theta)\mapsto \left(F_i(t)=tf_i(t), \theta+\omega+g_i(t)\right),\quad\quad f_i(t)=\left|\nu_i(t)\right|^2.$$
According to section \ref{Nonu}, a conjugacy is necessarily of the form 
$$\Phi(z)=z\left(1+h(|z|^2)\right),$$
 that is 
$$(t,\theta)\mapsto \left(\phi(t)=t\varphi(t),\theta+\gamma(t)\right),\quad\quad \varphi(t)=\left|1+h(t)\right|^2,\quad \gamma(t)=\arg(1+h(t)).$$ 
The following diagram summarizes the situation. 

\medskip

\hskip 0.6cm
\includegraphics[scale=0.55]{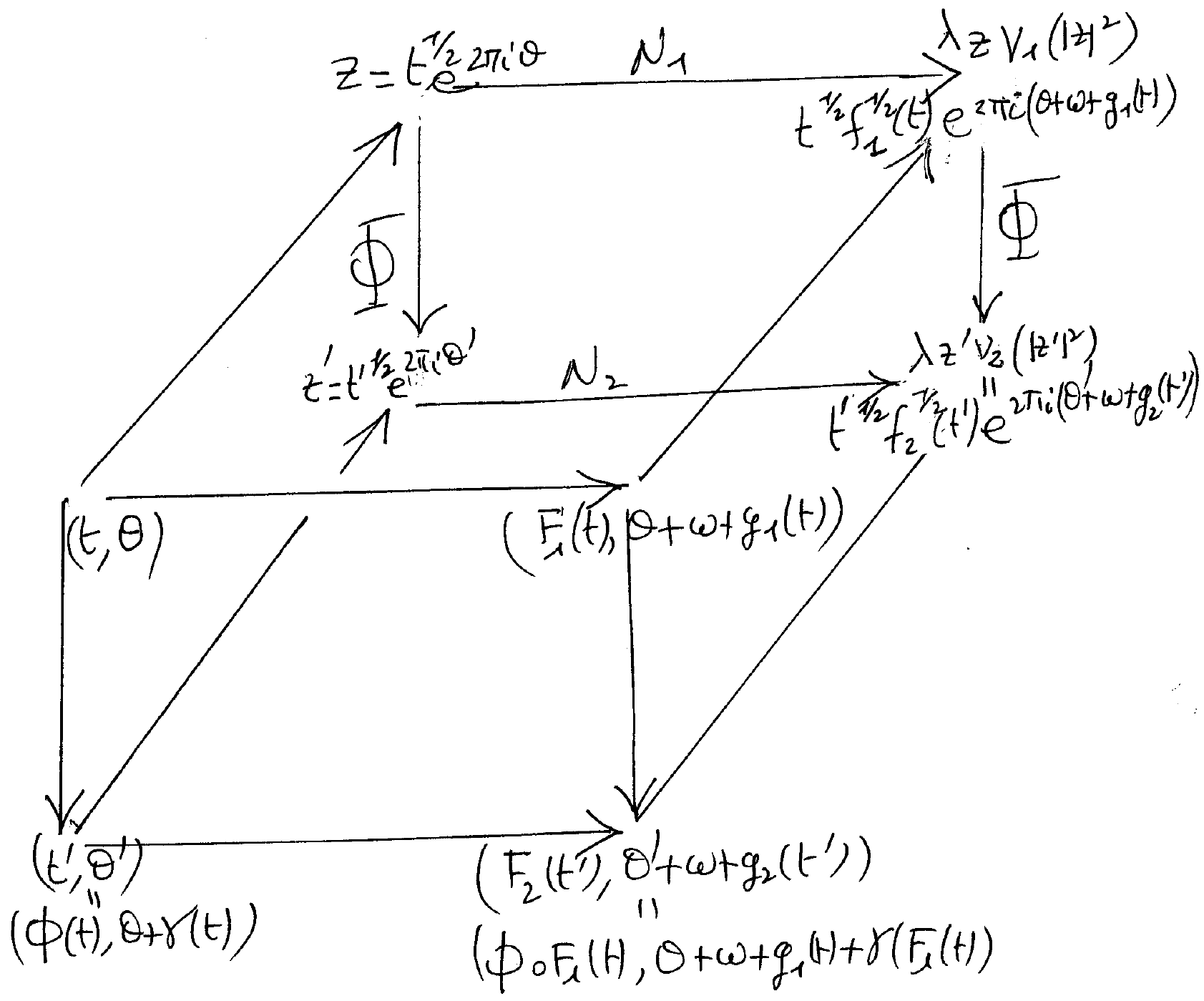}
\begin{center}
Figure 3 : Symplectic polar coordinates.
\end{center}

The conjugacy equation takes the form :
\begin{equation*}
\left\{
\begin{aligned}
&F_2\circ\phi=\phi\circ F_1, &\qquad\qquad\qquad &(CE1)\\[1ex]
&\gamma\circ F_1-\gamma+g_1=g_2\circ\phi. & &(CE2)
\end{aligned}
\right.
\end{equation*}

\noindent Equation $(CE1)$ is the conjugacy equation corresponding to the 1-dimensional real normalization problem. According to the formal analogue of \cite{Ch}, the map $F_2(t)=t-bt^{r+1}+O(t^{r+2})$ is formally conjugate to $F_1(t)=t-bt^{r+1}+ct^{2r+1}$, with $c\in\R$ uniquely defined by the $(2r+1)$-jet of $F_2$. Hence it is also conjugate to any map 
$F_1(t)=t-bt^{r+1}+ct^{2r+1}+O(t^{2r+2}),$ that is
\begin{lemma}
For any $F_1(t)\in t-bt^{r+1}+ct^{2r+1}+t^{2r+2}\R[[t]]$, there exists 
a formal diffeomorphism $\phi(t)\in t+t^2\R[[t]]$ such that such that $F_2\circ \phi=\phi\circ F_1$; moreover, the $(2r+1)$-jet of $\phi$ does not depend on the choice of such a $F_1$.
\end{lemma}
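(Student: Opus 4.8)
```latex
\textbf{Plan of proof.} The statement is a minor variant of the one-dimensional formal conjugacy result quoted just above from \cite{Ch}: the point is only that the normalizing map $\phi$ can be built against \emph{any} representative $F_1$ in the indicated jet-class, and that the low-order part of $\phi$ is insensitive to that choice. The plan is to reduce everything to the homological equation attached to $(CE1)$ and to run the usual order-by-order induction, keeping track of where the ``$b$'' and ``$c$'' terms intervene.

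First I would recall that a parabolic germ $F_2(t)=t-bt^{r+1}+O(t^{r+2})$ with $b>0$ has, by the formal version of \cite{Ch}, a well-defined formal normal form $t\mapsto t-bt^{r+1}+ct^{2r+1}$, where $c$ depends only on the $(2r+1)$-jet of $F_2$; fix a formal $\psi_2(t)\in t+t^2\R[[t]]$ realizing $F_2\circ\psi_2=\psi_2\circ(t-bt^{r+1}+ct^{2r+1})$. Now given an arbitrary $F_1(t)=t-bt^{r+1}+ct^{2r+1}+O(t^{2r+2})$, the same theorem (applied to $F_1$, which has the same $b$ and the same invariant $c$ because these are read off the $(2r+1)$-jet) provides $\psi_1(t)\in t+t^2\R[[t]]$ with $F_1\circ\psi_1^{-1}=\psi_1^{-1}\circ(t-bt^{r+1}+ct^{2r+1})$ \emph{in the appropriate sense}; composing, $\phi:=\psi_2\circ\psi_1^{-1}\in t+t^2\R[[t]]$ satisfies $F_2\circ\phi=\phi\circ F_1$. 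This already gives existence. Alternatively, and more self-containedly, I would solve $F_2\circ\phi=\phi\circ F_1$ directly: writing $\phi(t)=t+\sum_{n\ge 2}\phi_n t^n$ and expanding both sides, the coefficient of $t^{n+r}$ yields, for each $n\ge 2$,
\begin{equation*}
-b\,n\,\phi_n + b\,\phi_n = (\text{polynomial in }\phi_2,\dots,\phi_{n-1}\text{ and the coefficients of }F_1,F_2),
\end{equation*}
i.e.\ $b(1-n)\phi_n=\cdots$, so that $\phi_n$ is uniquely determined for every $n\ge 2$ \emph{except} $n=r+1$, where the left-hand side vanishes; at that order the equation becomes a solvability condition, and it is satisfied precisely because $F_1$ has been chosen with the correct coefficient $c$ of $t^{2r+1}$ (this is exactly the content of the invariant $c$ in \cite{Ch}). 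The free parameter $\phi_{r+1}$ is the familiar one-parameter ambiguity of parabolic normalization.

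For the second assertion --- that the $(2r+1)$-jet of $\phi$ is independent of the choice of $F_1$ --- I would observe from the recursion above that $\phi_n$ for $2\le n\le r$ depends only on coefficients of $F_1,F_2$ of order $\le n+r-1\le 2r-1$, hence only on data common to all admissible $F_1$ (which agree up to order $2r+1$, in fact up to order $2r$). The coefficient $\phi_{r+1}$ is the free parameter and is \emph{chosen}, not determined, so we simply fix it once and for all (say $\phi_{r+1}=0$). Then for $r+2\le n\le 2r+1$ the right-hand side of the recursion involves $\phi_2,\dots,\phi_{n-1}$ (already shown jet-independent once $\phi_{r+1}$ is fixed) and coefficients of $F_1,F_2$ of order $\le n+r-1\le 3r$, but crucially the \emph{only} way a coefficient of $F_1$ of order $>2r+1$ could enter is multiplied by $\phi_m$ with $m$ small, producing a term of order $>2r+1+2=2r+3$; a careful bookkeeping shows that through order $t^{3r+1}$ on the right (i.e.\ determining $\phi_n$ for $n\le 2r+1$) only the $(2r+1)$-jet of $F_1$ is used. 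Hence the $(2r+1)$-jet of $\phi$ is the same for every admissible $F_1$.

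The main obstacle is the order-$(r+1)$ solvability condition: one must verify that the obstruction to solving for $\phi_{r+1}$ vanishes, which is where the hypothesis ``$F_1$ has the same invariant $c$ as $F_2$'' is essential and where the cited result \cite{Ch} does the real work. Everything else is the routine triangular induction $b(1-n)\phi_n=(\text{known})$. I would therefore state the reduction to \cite{Ch} cleanly, isolate the $n=r+1$ step as the single non-formal point, and relegate the jet-stability bookkeeping to a short remark.
```
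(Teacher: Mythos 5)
Your existence argument is the same as the paper's: the paper obtains the lemma precisely by invoking the formal analogue of \cite{Ch} to say that $F_2$ and every $F_1$ in the stated class share the normal form $t-bt^{r+1}+ct^{2r+1}$, and composing the two normalizations; your alternative direct recursion is also structurally right, except that the linear coefficient of $\phi_n$ in the equation at order $t^{n+r}$ is $b(r+1-n)\phi_n$, not $b(1-n)\phi_n$ (you kept $-bn\phi_n$ from $\phi'\cdot(F_1-\mathrm{id})$ but only $b\phi_n$ instead of $b(r+1)\phi_n$ from $F_2\circ\phi$); since you nevertheless single out $n=r+1$ as the exceptional order and tie the solvability there to the invariant $c$, this is only a slip.

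The genuine gap is the jet-stability bookkeeping in your third paragraph. It is false that a coefficient of $F_1$ of order $>2r+1$ can enter the recursion only multiplied by some $\phi_m$: in $\phi\circ F_1$ the term $F_1$ itself (equivalently the factor $1$ in $\phi'$) injects the coefficient of $F_1$ of order $n+r$ \emph{linearly} into the equation that determines $\phi_n$. Hence for $r+2\le n\le 2r+1$ the coefficient $\phi_n$ does depend on the $O(t^{2r+2})$ part of $F_1$. Concretely, for $r=1$, $F_2(t)=t-t^2$ (so $c=0$) and $F_1(t)=t-t^2+et^4$, the $t^4$-coefficient of $F_2\circ\phi=\phi\circ F_1$ gives $\phi_3=e+\phi_2+\phi_2^2$, so no normalization of the free parameter $\phi_2$ makes the $3$-jet of $\phi$ independent of $e$. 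Your argument for the ``moreover'' clause therefore cannot be repaired as written; what your triangular induction does prove, and what is actually used in the subsequent corollary (only the $r$-jet of $g_2\circ\phi$ matters there), is the weaker statement that $\phi_2,\dots,\phi_r$ are uniquely determined by the equations at orders $\le 2r$, hence depend only on the common jet of the admissible maps $F_1$ and not on the free coefficient $\phi_{r+1}$. You should prove and use that $r$-jet statement, and flag that the literal $(2r+1)$-jet claim needs to be read in this weakened sense.
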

Being able to cope with the  $O(t^{2r+2})$ term is crucial. First, the following lemma will allow us to solve $(CE2)$ as soon as the $r$-jet of $g_1$ coincides with the $r$-jet  
$G(t)$ of $g_2\circ\phi$ {\it which, by the lemma, depends only on $N_2$ and not on the precise choice of the $O(t^{2r+2})$ terms in $F_1$}, that is not on $\phi$:
\begin{lemma}\label{homol} For any $F_1(t)\in
  t-bt^{r+1}+ct^{2r+1}+t^{2r+2}\R[[t]]$ with $b\neq0$, the linear operator $\gamma\mapsto \gamma\circ F_1-\gamma$ induces a bijection $t\R[[t]]\to t^{r+1}\R[[t]].$
\end{lemma}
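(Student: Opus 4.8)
The plan is to analyze the linear operator $L:\gamma\mapsto \gamma\circ F_1-\gamma$ term by term in the grading by order of vanishing at $t=0$, using that $F_1(t)=t-bt^{r+1}+ct^{2r+1}+O(t^{2r+2})$ with $b\neq 0$ behaves, to leading order, like a translation by $-bt^{r+1}$. First I would check that $L$ maps $t\R[[t]]$ into $t^{r+1}\R[[t]]$: if $\gamma(t)=\gamma_m t^m+O(t^{m+1})$ with $m\ge 1$, then $\gamma\circ F_1(t)-\gamma(t)=\gamma_m\big((t-bt^{r+1}+\cdots)^m-t^m\big)+\cdots = -m b\,\gamma_m\, t^{m+r}+O(t^{m+r+1})$, so the valuation jumps by exactly $r$ (here I use $b\neq0$ and that the characteristic $0$ makes $m b\neq 0$). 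Thus the image lies in $t^{r+1}\R[[t]]$, and moreover $L$ is \emph{graded-triangular}: it sends the lowest-order term $\gamma_m t^m$ to $-mb\,\gamma_m t^{m+r}$ plus strictly higher-order terms.

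Next I would prove surjectivity and injectivity simultaneously by solving, by induction on the order, the equation $L\gamma=\psi$ for an arbitrary $\psi\in t^{r+1}\R[[t]]$. Write $\gamma=\sum_{m\ge 1}\gamma_m t^m$ and $\psi=\sum_{k\ge r+1}\psi_k t^k$. Comparing the coefficient of $t^{m+r}$ on both sides of $L\gamma=\psi$ and isolating the term $-mb\,\gamma_m$ (which is the only contribution of $\gamma_m$ at that order, all other occurrences of $\gamma_m$ being at strictly higher order and all terms $\gamma_{m'}$ with $m'<m$ having already been fixed), one gets a recursion
\[
-mb\,\gamma_m = \psi_{m+r} - (\text{a polynomial expression in } \gamma_1,\dots,\gamma_{m-1}),
\]
which determines $\gamma_m$ uniquely since $mb\neq0$. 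Starting from $m=1$ this determines all $\gamma_m$, hence $\gamma$ exists and is unique; this gives both that $L$ is onto $t^{r+1}\R[[t]]$ and that $\ker L=0$, i.e.\ $L$ is a bijection. The coefficients produced are manifestly real, so $\gamma\in t\R[[t]]$.

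The only mild subtlety — and the point I would state carefully rather than the "hard part" — is to make explicit that the recursion is genuinely triangular: one must observe that in the expansion of $\gamma\circ F_1(t)$ the coefficient $\gamma_m$ appears for the first time in degree $m$ with coefficient $1$ (cancelled by the $-\gamma(t)$ term) and next in degree $m+r$ with coefficient $-mb$, while every coefficient $\gamma_{m'}$ with $m'>m$ contributes only to degrees $\ge m'>m$; hence at each stage of the induction exactly one new unknown enters, with an invertible (scalar) coefficient. No convergence issue arises because everything is at the level of formal power series. I expect no real obstacle: the statement is essentially the formal/linearized shadow of the classical fact that a parabolic germ $t\mapsto t-bt^{r+1}+\cdots$ is, after the Abel-type change of variable, a unit translation, for which the cohomological operator ``compose minus identity'' is a graded isomorphism shifting valuation by $r$.
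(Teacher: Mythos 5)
Your proof is correct, but it follows a different route from the paper. You argue coefficient by coefficient: writing $\gamma=\sum_{m\ge1}\gamma_m t^m$, you observe that $\gamma_m t^m$ contributes $-mb\,\gamma_m t^{m+r}$ plus strictly higher-order terms, so the operator is triangular with respect to the valuation grading with invertible diagonal entries $-mb\neq0$; injectivity and surjectivity then follow from the resulting recursion. The paper instead factors the operator: writing $F_1(t)=t+tu(t)$ with $u(t)\in -bt^r+t^{r+1}\R[[t]]$, the Taylor formula gives
$\gamma\circ F_1-\gamma= u\cdot\bigl(E\gamma+\sum_{k\ge 2}T_k\gamma\bigr)$ with $E=t\frac{d}{dt}$ and $T_k=\frac{1}{k!}u^{k-1}t^k\bigl(\frac{d}{dt}\bigr)^k$; since each $T_k$ raises the order by at least $(k-1)r$ while $E$ is an order-preserving bijection of $t\R[[t]]$, the operator $E+\sum_{k\ge2}T_k$ is inverted by a formally convergent Neumann series of operators, and multiplication by $u=-bt^r+O(t^{r+1})$ is a bijection $t\R[[t]]\to t^{r+1}\R[[t]]$ because $b\neq0$. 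The two arguments rest on the same leading-order phenomenon (your diagonal coefficient $-mb$ is precisely the action of $u\cdot E$ on $t^m$), but yours is the more elementary, self-contained induction, while the paper's factorization packages the inverse as an explicit series of operators $\bigl(E+\sum T_k\bigr)^{-1}$ composed with division by $u$, a structure that is cleaner to quote and reuse. One small point of care in your write-up: the recursion solves for $\gamma_m$ from the coefficient of $t^{m+r}$, so you should (and implicitly do) note that the right-hand side $\psi$ having valuation at least $r+1$ is exactly what makes the induction start at $m=1$ with no compatibility condition; with that made explicit, your argument is complete.
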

\begin{proof}
Write $F_1(t)=t+tu(t)$ with $u(t)\in -bt^r+ct^{2r}+t^{2r+1}\R[[t]]$ and hence $u(t)\in -bt^r+t^{r+1}\R[[t]]$. Taylor formula yields
$$\gamma\circ F_1-\gamma= u\cdot \bigg(E\gamma+\sum_{k\ge 2}T_k\gamma \bigg),\quad E=t\frac{d}{dt},\; T_k=\frac{1}{k!}\, u^{k-1}t^k\Big(\frac{d}{dt}\Big)^k\cdot$$
The series of operators $\sum T_k$ is convergent in the following
sense: when applied to a formal series, $T_k$ increases its order by
at least $(k-1)r$ units (because~$u$ is of order~$r$). Now $E:t\R[[t]]\to t\R[[t]]$ is a bijection which does not change the order. It follows that $E+\sum_{k\ge 2}T_k$ is also a bijection whose inverse is defined by the convergent series of operators
$$\bigg(E+\sum_{k\ge 2}T_k\bigg)^{-1}=\bigg(Id+E^{-1}\sum_{k\ge 2}T_k\bigg)^{-1}\circ E^{-1}=\sum_{l\ge 0}(-1)^l\sum_{k\ge 2}\left(E^{-1}\circ T_k\right)^l\circ E^{-1}.$$
Finally, the conclusion follows from the fact that multiplication by 
$u=-bt^r+O(t^{r+1})$ is a bijection from $t\R[[t]]$ to $t^{r+1}\R[[t]]$ because $b\not=0$.
\end{proof}

\begin{corollary} There exists a polynomial $G(t)$ of degree $r$ such that, given any two formal series $\rho(t)$ and $\sigma(t)$, there exists a formal diffeomorphism $\phi\in t+t^2\R[[t]]$ and $\gamma\in t\R[[t]]$ which define a formal conjugacy of $N_2$ to 
$$N_1(t,\theta)=t^{\frac{1}{2}}\left[1-bt^{r}+ct^{2r}+t^{2r+1}\rho(t)\right]^{\frac{1}{2}}e^{2\pi i\left(\theta+\omega+G(t)+t^{r+1}\sigma(t)\right)},$$
that is
$$N_1(z)=\lambda z\left[1-b|z|^{2r}+c|z|^{4r}+|z|^{4r+2}\rho(|z|^2)\right]^{\frac{1}{2}}e^{2\pi i\left(G(|z|^2)+|z|^{2r+2}\sigma(|z|^2)\right)}.$$
\end{corollary}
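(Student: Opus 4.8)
The plan is to combine the two preceding lemmas, treating the radial and angular equations in sequence. First, I would apply the one-dimensional formal normalization lemma (the formal analogue of \cite{Ch}) to the radial conjugacy equation $(CE1)$: starting from the radial part $F_2(t)=t-bt^{r+1}+O(t^{r+2})$ of $N_2$, with $b>0$, this lemma produces, for any choice of prescribed tail $\rho$, a formal diffeomorphism $\phi(t)\in t+t^2\R[[t]]$ conjugating $F_2$ to the radial map $F_1(t)=t-bt^{r+1}+ct^{2r+1}+t^{2r+2}\rho(t)$, where $c\in\R$ is the invariant determined by the $(2r+1)$-jet of $F_2$. Crucially, by the second clause of that lemma the $(2r+1)$-jet of $\phi$ is independent of $\rho$; therefore the $r$-jet of $g_2\circ\phi$ is a well-defined polynomial $G(t)$ of degree $r$, depending only on $N_2$.

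Second, with $\phi$ fixed (for whatever choice of $\rho$), I would turn to the angular equation $(CE2)$, namely $\gamma\circ F_1-\gamma+g_1=g_2\circ\phi$. Write $g_2\circ\phi=G(t)+t^{r+1}\tau(t)$ for some formal series $\tau$, and set $g_1(t)=G(t)+t^{r+1}\sigma(t)$ for the prescribed $\sigma$. Then $(CE2)$ becomes $\gamma\circ F_1-\gamma = g_2\circ\phi - g_1 = t^{r+1}\big(\tau(t)-\sigma(t)\big)\in t^{r+1}\R[[t]]$. Lemma~\ref{homol} says precisely that $\gamma\mapsto\gamma\circ F_1-\gamma$ is a bijection $t\R[[t]]\to t^{r+1}\R[[t]]$ (the hypothesis $b\neq0$ holds here), so there is a unique $\gamma\in t\R[[t]]$ solving this equation. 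The pair $(\phi,\gamma)$ then defines a formal conjugacy of $N_2$ to the map $N_1$ whose radial part is $t^{1/2}[1-bt^r+ct^{2r}+t^{2r+1}\rho(t)]^{1/2}$ (by construction of $\phi$) and whose angular part is $\theta+\omega+G(t)+t^{r+1}\sigma(t)$ (by construction of $\gamma$). Rewriting in the original coordinate via $t=|z|^2$, $z=t^{1/2}e^{2\pi i\theta}$ gives the stated formula for $N_1(z)$.

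The main subtlety — and the reason the two lemmas are packaged the way they are — is the interdependence of the two equations through $\phi$: one cannot solve $(CE2)$ until $\phi$ is known, and $\phi$ is only pinned down after a choice of the $O(t^{2r+2})$ tail in $F_1$. The key observation that unlocks this is that the $r$-jet of $g_2\circ\phi$, hence the polynomial $G$, does \emph{not} depend on that choice; so $G$ can be defined once and for all, and then the freedom in $\rho$ (radial tail) and $\sigma$ (angular tail) can be exercised independently. I would therefore present the proof essentially as two invocations — the corollary of \cite{Ch} for $(CE1)$, then Lemma~\ref{homol} for $(CE2)$ — with the only real content being the bookkeeping remark that $G$ is jet-stable under the tail choice, which is already guaranteed by the second clause of the radial lemma.

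\begin{proof}
By the lemma preceding Lemma~\ref{homol}, for any formal series $\rho$ there is a formal diffeomorphism $\phi\in t+t^2\R[[t]]$ conjugating the radial part $F_2(t)=tf_2(t)$ of $N_2$ to $F_1(t)=t-bt^{r+1}+ct^{2r+1}+t^{2r+2}\rho(t)$, where $c$ is determined by the $(2r+1)$-jet of $F_2$; moreover the $(2r+1)$-jet of $\phi$ is independent of $\rho$. Consequently the $r$-jet of $g_2\circ\phi$ is a well-defined polynomial $G(t)$ of degree $r$, depending only on $N_2$. Write $g_2\circ\phi(t)=G(t)+t^{r+1}\tau(t)$. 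Given $\sigma$, set $g_1(t)=G(t)+t^{r+1}\sigma(t)$; then the angular equation $(CE2)$ reads $\gamma\circ F_1-\gamma = g_2\circ\phi-g_1 = t^{r+1}(\tau-\sigma)$. Since $b\neq0$, Lemma~\ref{homol} provides a unique $\gamma\in t\R[[t]]$ solving this. The pair $(\phi,\gamma)$ then realizes a formal conjugacy of $N_2$ to the map with radial part $tf_1(t)=t\,[1-bt^r+ct^{2r}+t^{2r+1}\rho(t)]$ and angular part $\theta+\omega+G(t)+t^{r+1}\sigma(t)$, which is exactly $N_1$ as displayed, once one substitutes $t=|z|^2$.
\end{proof}
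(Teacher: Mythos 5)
Your proof is correct and follows essentially the same route as the paper: define $G$ as the $r$-jet of $g_2\circ\phi$, which is well defined because the $(2r+1)$-jet of $\phi$ (hence its $r$-jet) is independent of the chosen tail of $F_1$, then solve $(CE1)$ by the one-dimensional normalization lemma and $(CE2)$ by Lemma~\ref{homol} with $g_1=G+t^{r+1}\sigma$. The paper's proof is just a terser version of the same argument, and your sign bookkeeping in $(CE2)$ matches it.
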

\begin{proof}
One defines $G(t)$ as the $r$-jet of $g_2\circ\phi$ which, by the remark preceding lemma \ref{homol},  is independent of $\phi$.  Setting $g_1(t)=G(t)+t^{r+1}\sigma(t)$, one can solve $(CE1)$ and $(CE2)$.
\end{proof}
\medskip

\noindent {\bf Proof of Proposition \ref{propol}.}
It remains to notice that $\rho$ and $\sigma$ may be chosen so that $N_1$ be the polynomial $N_1(z)=\lambda zP(|z|^2)$, where $P(t)$ is the $2r$-jet of 
$\left[1-bt^r+ct^{2r}\right]^{\frac{1}{2}}e^{2\pi iG(r)}.$
\end{proof}
\smallskip

\noindent {\bf Remark.} In the same way, one can achieve a normal form
$N(z)=\lambda zQ(|z|^2)e^{2\pi iG(|z|^2)}$, where $Q(t)$ is the $2r$-jet of 
$\left[1-bt^r+ct^{2r}\right]^{\frac{1}{2}}$.

\section{Topological theory}\label{topth}
{\it In this section we do suppose that $F$ is not conservative and write 
$$F(z)=\lambda z(1+f(|z|^2))e^{2\pi ig(z)},\quad f(u)=au^d+O(u^{d+1}),\; d\ge 1,\;  a<0,$$
with $f:(\R,0)\to(\R,0)$ and $g:(\C,(0,0))\to(\R,0)$ real analytic germs.}
\smallskip

\noindent While in the formal theory Lemma \ref{n-pol} restrains the possible choices of $n$, for the topological theory any continuous $n:[0,R^2[\to\R$ vanishing at 0 is admissible and it is in this general setting that we recall
Sternberg's theorem (section \ref{Sternberg}).
\smallskip

\noindent Nevertheless, in section \ref {refinedTop} we come back to normal forms  $N$ satisfying the restrictions given in Lemma \ref{n-pol} when studying
the existence of a special type of topological semi-conjugacies of $F$ to $N$. 

\subsection{Sternberg's theorem}\label{Sternberg}
Here a normal form is a local homeomorphism of $(\R^2,0)$ which commutes with the group of rotations, that is which sends each small circle centered at 0 to another such circle by a rotation.
If the local contraction $F$ preserves the foliation ${\cal F}_0$, we call a normal form $N$ $F$-special if it sends each small circle $C$ onto the image $F(C)$ of this same circle by $F$.
 
\smallskip

\noindent One deduces from \cite{S} that any two local contractions are topologically conjugate one to the other in the neighborhood of  0. Moreover, if $F$ preserves the foliation ${\cal F}_0$, the proof in \cite{S} gives naturally a local topological conjugacy $\Phi$ to {\it any} $F$-special normal form $N$ such that, as in section \ref{special}, $\Phi$ preserves individually each circle. 
Indeed, let us choose any $F$-special normal form$N$; if $\cal D$ is a small disk  centered at 0 and ${\cal C}_0$ is its boundary, we may define $\Phi$ on ${\cal C}_0$ and its image ${\cal C}_1=F({\cal C}_0)$ by $$\Phi|_{{\cal C}_0}=Id\quad \hbox{and}\quad  \Phi|_{{\cal C}_1}=N\circ F^{-1}.$$
(${\cal C}_0$ and ${\cal C}_1$ are disjoint if $F$ is a contraction because $F$ preserves the foliation by circles, see figure 4). 

\hskip-0.5cm
\includegraphics[scale=0.6]{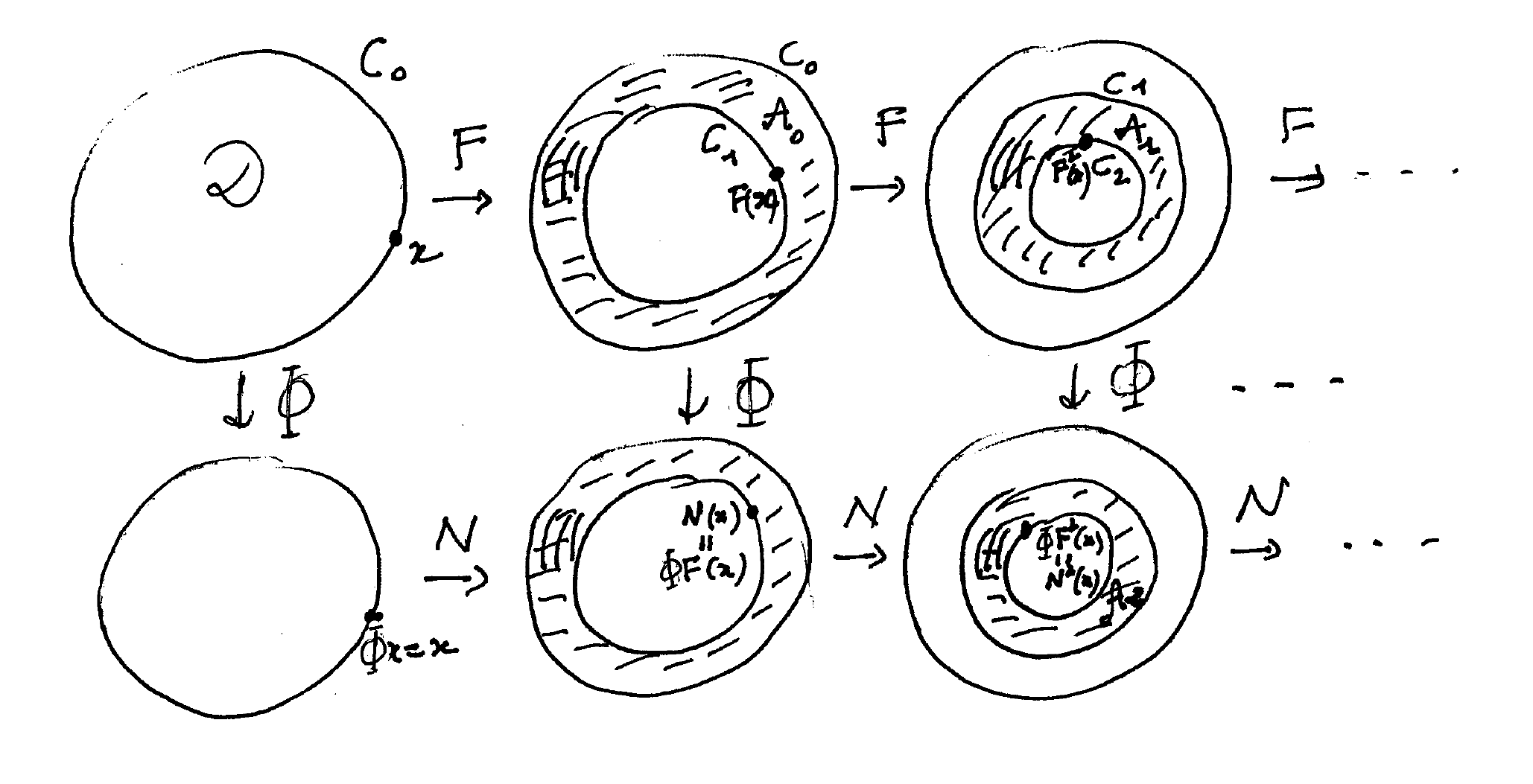}
\begin{center}
Figure 4 : Topological conjugacy.
\end{center}

\noindent It is then possible to extend $\Phi$ to the annulus ${\cal A}_0={\cal D}\setminus int F({\cal D})$ by an interpolation which preserves the foliation by circles centered at 0. Indeed, it is always possible to choose an analytic (if $N$ is chosen analytic) family of analytic diffeomorphisms of the circle which interpolates between the $Id$ and $N\circ F^{-1}|_{{\cal C}_1}$ after ${\cal C}_0$ and ${\cal C}_1$ have both been identified by radial projection with the standard circle. One then extends $\Phi$ to the images $F^n({\cal A}_0)=N^n({\cal A}_0)={\cal A}_n$ of ${\cal A}_0$ by $\Phi|_{{\cal A}_n}=N^n\circ \Phi|_{{\cal A}_0}\circ F^{-n}.$ 
This defines a local homeomorphism $\Phi$ of $(\R^2,0)$. 

\subsection{Special semi-normalizations, homological equation, Neumann series}\label{refinedTop}

\subsubsection{Topological semi-conjugacies and the homological equation}
\begin{definition}\label{semiconj}
Given maps $F$ and $N$, we say that $\Phi$ establishes a semi-conjugacy of $F$ to $N$ if $\Phi$ is surjective and $\Phi\circ F=N\circ\Phi$. We speak of topological semi-conjugacy when $\Phi$ and $N$ are continuous.
\end{definition}
We are interested in {\it special semi-normalizations} given by {\it special semi-conjugacies} $\Phi$, i.e.\ such that $|\Phi(z)|=|z|$, of $F$ to {special normal forms} $N$, i.e.\ such that $N(z)=\lambda z(1+f(|z|^2))e^{2\pi in(z|^2)}$.
We write $\Phi(z)=ze^{2\pi i\varphi(z)}$, meaning that $\varphi$ is a priori defined only on a punctured neighborhood $D^*_R=\{z\in\C,\, 0<|z|<R\}$ of 0 and that $\Phi(0)=0$. The map $\varphi$ can be chosen continuous on $D^*_R$ if $\Phi$ is continuous on $D_R=\{z\in\C,\, |z|<R\}$ and conversely, if $\varphi$ is continuous on $D^*_R$, the map $\Phi:D_R\to D_R$ is continuous and surjective. {\it 

\noindent From now on we are interested only in the case $\Phi$ is continuous.}
\smallskip

\noindent The semi-conjugacy equation $\Phi\circ F=N\circ \Phi$ is equivalent to the equation $$\varphi-\varphi\circ F=\tilde g \mod\Z, \;\hbox{where} \;\tilde g(z)=g(z)-n(|z|^2).$$
As $\varphi$ and $n$ are continuous, this is equivalent to 
$$\exists k\in\Z,\;  \varphi-\varphi\circ F=\tilde g+k.\eqno(HE)_k$$
Since $\tilde g$ is continuous on $D_R$ and vanishes at 0, we are led to single out a particular class of topological special semi-conjugacies:
\begin{definition}\label{angultame}
We say that $\Phi(z)=ze^{2\pi i\varphi(z)}$ is {$\theta$-tame}  or {\it angularly tame} if $\varphi$ extends to a continuous function $\varphi:D_R\to\R$.
\end{definition}
\begin{lemma}\label{HE_0}
$\Phi$ is angularly tame if and only if $\varphi$ is continuous at 0 and satisfies
$$\varphi-\varphi\circ F=\tilde g.\eqno(HE)_0$$
\end{lemma}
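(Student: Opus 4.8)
\textbf{Proof strategy for Lemma \ref{HE_0}.}

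The plan is to prove the two implications separately, using that $(HE)_k$ holds for some $k\in\Z$ (this was established just before the statement) and that $\tilde g$ is continuous on $D_R$ with $\tilde g(0)=0$.

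First I would treat the easy direction: suppose $\varphi$ is continuous at $0$ and satisfies $(HE)_0$. Then $\varphi$ is continuous on all of $D_R$ (being continuous on $D^*_R$ by the choice made before the definition, and continuous at $0$ by hypothesis), so by Definition \ref{angultame} $\Phi$ is angularly tame. Nothing more is needed here.

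For the converse, assume $\Phi$ is angularly tame, so $\varphi:D_R\to\R$ is continuous; in particular $\varphi$ is continuous at $0$. We know $(HE)_k$ holds for some fixed $k\in\Z$: $\varphi-\varphi\circ F=\tilde g+k$ on $D^*_R$. The idea is to evaluate this in the limit $z\to 0$. Since $F$ is a contraction fixing $0$, $F(z)\to 0$ as $z\to 0$, hence $\varphi\circ F(z)\to\varphi(0)$ by continuity of $\varphi$ at $0$; likewise $\varphi(z)\to\varphi(0)$. Therefore the left-hand side $\varphi(z)-\varphi\circ F(z)\to 0$. On the right-hand side, $\tilde g(z)\to\tilde g(0)=0$, so $\tilde g(z)+k\to k$. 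Comparing the two limits forces $k=0$, which is exactly $(HE)_0$. I would also note that the same computation shows $\varphi(0)$ drops out and no normalization of $\varphi(0)$ is needed.

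The only point requiring a little care — and the main (very mild) obstacle — is justifying that one may pass to the limit at $0$ in $(HE)_k$: this uses that $\varphi$ is genuinely continuous at $0$ (from angular tameness), that $F(z)\to0$ (from $F$ being a local contraction fixing the origin, as assumed throughout the topological section), and that $\tilde g$ is continuous at $0$ with value $0$ (explicitly stated before Definition \ref{angultame}). With these three facts the integer $k$, which a priori could be any element of $\Z$, is pinned down to $0$, completing the equivalence.
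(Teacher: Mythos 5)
Your proof is correct and follows essentially the same route as the paper: the key step in both is to pass to the limit $z\to 0$ in $(HE)_k$, using continuity of $\varphi$ at $0$, $F(0)=0$, and $\tilde g(0)=0$, so the left side tends to $0$ and the right side to $k$, forcing $k=0$. Your explicit mention of the trivial converse direction is a harmless addition the paper leaves implicit.
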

\begin{proof}
$\varphi$ and $F$ being continuous on $D_R$ with $F(0)=0$, the limit when $z$ tends to 0 of the left hand side of $(HE)_k$ is $\varphi(0)-\varphi(0)=0$ while the limit of the right hand side is $\tilde g(0)+k=k$.  
\end{proof}
\begin{definition}\label{Neumann} 
We call {\it Neumann series} the series of the form 
$\sum_{m=0}^\infty \tilde g\circ F^{(m)}.$
\end{definition}
\begin{lemma}\label{ptwisecv}
If $\Phi$ is an angularly tame semi-conjugacy of $F$ to $N$, the Neumann series is pointwise convergent and $\varphi=\varphi(0)+\sum_{m=0}^\infty \tilde g\circ F^{(m)}$. In particular, there is at most one angularly tame semi-conjugacy  of $F$ to a given $N$ up to a rotation.
\end{lemma}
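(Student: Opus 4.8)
\textbf{Plan for the proof of Lemma \ref{ptwisecv}.}

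The plan is to exploit the homological equation $(HE)_0$ from Lemma \ref{HE_0}, which an angularly tame semi-conjugacy necessarily satisfies, and iterate it. First I would rewrite $(HE)_0$ as $\varphi(z) = \tilde g(z) + \varphi(F(z))$, and then substitute it into itself repeatedly: for every $M\ge 0$,
\[
\varphi(z) = \sum_{m=0}^{M} \tilde g\bigl(F^{(m)}(z)\bigr) + \varphi\bigl(F^{(M+1)}(z)\bigr).
\]
This is the telescoping identity obtained from summing $(HE)_0$ evaluated at $z, F(z), \dots, F^{(M)}(z)$. Since $F$ is a contraction fixing $0$ (more precisely $|F(z)|\le \rho|z|(1+f(|z|^2))$ with $\rho\le 1$ and $f$ of negative leading coefficient, so $|F^{(M)}(z)|\to 0$ on $D_R$) and $\varphi$ is continuous at $0$ by the angular tameness hypothesis, the remainder term $\varphi(F^{(M+1)}(z))$ tends to $\varphi(0)$ as $M\to\infty$. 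Hence the partial sums $\sum_{m=0}^{M}\tilde g(F^{(m)}(z))$ converge to $\varphi(z)-\varphi(0)$ for each fixed $z$, which is exactly pointwise convergence of the Neumann series together with the claimed formula $\varphi = \varphi(0) + \sum_{m\ge 0}\tilde g\circ F^{(m)}$.

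For the uniqueness statement, I would argue that if $\Phi_1(z)=ze^{2\pi i\varphi_1(z)}$ and $\Phi_2(z)=ze^{2\pi i\varphi_2(z)}$ are two angularly tame semi-conjugacies of $F$ to the same $N$, then both $\varphi_1$ and $\varphi_2$ satisfy $(HE)_0$ with the same right-hand side $\tilde g$ (note $\tilde g = g - n(|z|^2)$ is determined by $F$ and $N$ alone). By the formula just established, $\varphi_j = \varphi_j(0) + \sum_{m\ge 0}\tilde g\circ F^{(m)}$ for $j=1,2$, so $\varphi_1 - \varphi_2 \equiv \varphi_1(0)-\varphi_2(0)$ is a real constant $c$. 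Then $\Phi_1 = R_c\circ\Phi_2$ where $R_c$ is the rotation $z\mapsto e^{2\pi i c}z$, which is the meaning of ``unique up to a rotation''.

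The only genuinely delicate point is the decay $F^{(M)}(z)\to 0$ uniformly enough — or at least pointwise — on the relevant neighborhood, so that continuity of $\varphi$ at $0$ can be invoked; this requires that $R$ be chosen so that $F$ maps $D_R$ into itself and is attracting there, which is guaranteed by $|\lambda|=\rho\le 1$ and the sign of the leading coefficient of $f$ as stated at the start of Section \ref{topth}. Everything else is the elementary telescoping of the functional equation, so I do not expect any real obstacle; the argument is essentially the classical cohomological/Neumann-series manipulation, and the hypothesis of angular tameness is precisely what is needed to pin down the value at $0$ and thereby both convergence and uniqueness.
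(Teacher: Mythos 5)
Your proposal is correct and follows essentially the same route as the paper: iterate $(HE)_0$ to get the telescoping identity $\varphi=\sum_{m=0}^{M-1}\tilde g\circ F^{(m)}+\varphi\circ F^{(M)}$, then use the contraction $F^{(M)}(z)\to 0$ together with continuity of $\varphi$ at $0$ to identify the limit of the remainder and hence the sum of the Neumann series. Your explicit treatment of uniqueness (two solutions of $(HE)_0$ differ by the constant $\varphi_1(0)-\varphi_2(0)$, giving a rotation) is exactly the implicit content of the paper's ``in particular'' clause, so there is nothing further to add.
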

\begin{proof}
From $(HE)_0$ one gets, for each integer $M\ge 1$,
$$\varphi(z)=\sum_{m=0}^{M-1}\tilde g\circ F^{(m)}(z)+\varphi\circ F^{(M)}(z).$$
As $\varphi\circ F^{(M)}$ tends pointwise to 0 when $M$ tends to infinity, we get that $\varphi-\varphi(0)$ is the pointwise limit of the series $\sum_{m=0}^\infty \tilde g\circ F^{(m)}$. 
\end{proof}
\smallskip

\noindent Conversely, if the Neumann series is pointwise convergent, its sum $\varphi$ provides a solution to $(HE)_0$ and thus a semi-conjugacy $\Phi$ but a stronger property is needed to ensure continuity:
\begin{lemma}\label{uniformcv}
If the Neumann series $\varphi=\sum_{m=0}^\infty \tilde{g}\circ F^{(m)}$ is uniformly convergent on $D_R$, it defines an angularly tame semi-conjugacy of $F$ to $N$.
\end{lemma}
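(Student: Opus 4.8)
The plan is to show that the three conditions required for angular tameness in Lemma~\ref{HE_0} --- namely, that $\varphi$ extends continuously to all of $D_R$, that $\varphi$ is continuous at $0$, and that $(HE)_0$ holds --- are all automatic once the Neumann series $\sum_{m\ge 0}\tilde g\circ F^{(m)}$ converges uniformly on $D_R$. First I would set $\varphi(z)=\sum_{m=0}^\infty \tilde g\circ F^{(m)}(z)$ and $\varphi(0)=0$; since each $\tilde g\circ F^{(m)}$ is continuous on $D_R$ (being a composition of continuous maps, with $\tilde g(0)=0$ so all terms vanish at $0$) and the convergence is uniform, the uniform limit $\varphi$ is continuous on $D_R$, in particular at $0$, with $\varphi(0)=0$. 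This already gives that $\Phi(z)=ze^{2\pi i\varphi(z)}$ is a well-defined continuous map $D_R\to D_R$ (surjectivity of such a $\Phi$ on disks was noted right before Definition~\ref{angultame}).

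Next I would verify $(HE)_0$ directly from the series. Applying the shift $m\mapsto m+1$, one has $\varphi\circ F(z)=\sum_{m=0}^\infty \tilde g\circ F^{(m+1)}(z)=\sum_{m=1}^\infty \tilde g\circ F^{(m)}(z)$, where I am using that composition with $F$ commutes with the uniform limit (valid because $F$ maps $D_R$ into itself and $\varphi$ is continuous, so $\varphi\circ F$ is the uniform limit of the partial sums composed with $F$). Subtracting, $\varphi(z)-\varphi\circ F(z)=\tilde g\circ F^{(0)}(z)=\tilde g(z)$, which is exactly $(HE)_0$. By Lemma~\ref{HE_0} this means $\Phi$ is angularly tame, and the equivalence in that lemma between $(HE)_0$ and angular tameness, together with the equivalence (noted before Definition~\ref{angultame}) between $(HE)_0\bmod\Z$ for continuous $\varphi$ and the relation $\Phi\circ F=N\circ\Phi$, gives that $\Phi$ establishes a semi-conjugacy of $F$ to $N$. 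Since $\Phi$ is continuous and surjective and $N$ is continuous, this is a topological semi-conjugacy in the sense of Definition~\ref{semiconj}.

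The only genuinely delicate point is the interchange of the composition $-\circ F$ with the infinite sum, i.e.\ justifying $\big(\sum_m \tilde g\circ F^{(m)}\big)\circ F = \sum_m \tilde g\circ F^{(m)}\circ F$ as an identity of continuous functions. This is where uniform (rather than merely pointwise) convergence of the Neumann series is used: the partial sums $S_M=\sum_{m=0}^{M-1}\tilde g\circ F^{(m)}$ converge uniformly to $\varphi$ on $D_R$, hence $S_M\circ F$ converges uniformly on $D_R$ (its image under $F$ being contained in $D_R$) to $\varphi\circ F$; but $S_M\circ F=\sum_{m=1}^{M}\tilde g\circ F^{(m)}$ also converges uniformly to $\sum_{m=1}^\infty\tilde g\circ F^{(m)}$, whence the two limits agree. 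Everything else is bookkeeping with the Neumann series, and the combinatorial content is already packaged in the preceding lemmas, so no further estimates are needed.
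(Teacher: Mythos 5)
Your proof is correct and follows exactly the argument the paper leaves implicit (the lemma is stated there without proof, as the converse of Lemma~\ref{ptwisecv}): uniform convergence of the Neumann series gives continuity of $\varphi$ on $D_R$ with $\varphi(0)=0$, the index shift yields $(HE)_0$, and the equivalences around Definition~\ref{angultame} and Lemma~\ref{HE_0} then give a continuous, surjective, angularly tame semi-conjugacy. The only remark is that the step you single out as delicate is in fact immediate: the identity $\varphi\circ F(z)=\sum_{m\ge 1}\tilde g\circ F^{(m)}(z)$ is just evaluation of the (pointwise convergent) series at the point $F(z)$, so uniform convergence is needed only for the continuity of $\varphi$, not for the index shift.
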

Note that injectivity is not granted.
\subsubsection{Existence of an angularly tame semi-conjugacy}

\begin{proposition}\label{tsc}
Let $F(z)=\lambda z(1+f(|z|^2))e^{2\pi ig(z)}$ be such that $f$ has
valuation~$d$ and let
$n^*(|z|^2) = \sum_{s=1}^d n_s|z|^{2s}$ be the polynomial determined by the
formal theory (see Lemma~\ref{n-pol}).
Let $N(z)=\lambda z(1+f(|z|^2))e^{2\pi in(|z|^2)}$ be a normal form.
\smallskip

\noindent 1) If $n(|z|^2)=n^*(|z|^2)+O(|z|^{2d+1})$,
then there exists an angularly tame semi-conjugacy of~$F$ to~$N$.
\smallskip

\noindent 2) If $n$ is analytic but not of this form---that is if $N$
is not a normal form in the sense of the formal theory---, then a topological
conjugacy exists thanks to Sternberg but no angularly tame
semi-conjugacy exists.
\end{proposition}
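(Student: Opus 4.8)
\textbf{Proof plan for Proposition \ref{tsc}.}
The plan is to reduce everything to the study of the Neumann series
$\varphi=\sum_{m\ge 0}\tilde g\circ F^{(m)}$, where $\tilde g(z)=g(z)-n(|z|^2)$,
and to exploit the radial contraction $|z|\mapsto \rho|z|(1+f(|z|^2))$, which
for a weak contraction behaves like $r_m\sim \mathrm{const}\cdot m^{-1/2d}$ when
$\rho=1$ and like $r_m\lesssim\rho^m$ when $\rho<1$. In both cases iterates of
$F$ converge to $0$, so pointwise convergence of the Neumann series is never the
real issue; the point is \emph{uniform} convergence on $D_R$, which by
Lemma~\ref{uniformcv} yields an angularly tame semi-conjugacy, versus its failure.

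For part~1), I would write $n=n^*+m$ with $m(|z|^2)=O(|z|^{2d+1})$ and decompose
$\tilde g = \tilde g^* - m$, where $\tilde g^*=g-n^*$ is the object controlled by
Lemma~\ref{n-pol}: its initial form $r^kP_k(\theta)$ has mean value zero, and in
fact $\tilde g^*(z)=O(|z|^{d+1})$ since the first $d$ resonant coefficients have
been killed. The key estimate is that along an orbit $r_{m+1}=\rho r_m(1+ar_m^{2d}+\cdots)$
one has $\sum_m r_m^{d+1}<\infty$: when $\rho<1$ this is a geometric series, and
when $\rho=1$ it is comparable to $\sum_m m^{-(d+1)/2d}=\sum_m m^{-1/2-1/2d}$,
which converges because $d\ge 1$. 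The same bound applies to the $-m$ part since
$m(|z|^2)=O(|z|^{2d+1})=O(|z|^{d+1})$ a fortiori. Hence $\sum_m \|\tilde g\circ F^{(m)}\|_{C^0(D_R)}<\infty$,
the Neumann series converges uniformly, and Lemma~\ref{uniformcv} gives the
angularly tame semi-conjugacy.

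For part~2), suppose $n$ is analytic but $n-n^*\not\equiv O(|z|^{2d+1})$; let
$s_0\le d$ be the smallest index with $n_{s_0}\neq n^*_{s_0}$, so that
$\tilde g(z)=cr^{2s_0}+(\text{higher order and mean-zero terms})$ with a nonzero
real constant $c$ and $s_0\le d$. Now I would argue by contradiction: an
angularly tame semi-conjugacy would, by Lemma~\ref{ptwisecv}, force
$\varphi(0)+\sum_m\tilde g\circ F^{(m)}$ to converge, in particular the
subseries built from the purely radial leading term, namely $c\sum_m r_m^{2s_0}$,
would have to converge; but $\sum_m r_m^{2s_0}$ diverges for $s_0\le d$ (when
$\rho=1$ it is $\asymp\sum_m m^{-s_0/d}$ with $s_0/d\le 1$, hence divergent; the
borderline $s_0=d$ gives the harmonic series). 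This contradiction rules out an
angularly tame semi-conjugacy, while Sternberg's theorem (Section~\ref{Sternberg})
still provides a genuine topological conjugacy to~$N$ since $N$ is a local
contraction commuting with rotations.

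\textbf{Main obstacle.} The delicate point is the divergence half of part~2):
one must make sure that the non-radial, mean-zero part of $\tilde g$, evaluated
along the orbit, cannot conspire to cancel the divergent radial sum
$c\sum_m r_m^{2s_0}$. This is where the structure in Lemma~\ref{n-pol}(2) is
used: the angular oscillations contribute terms whose partial sums stay bounded
(the angles $\theta_m$ equidistribute, or at worst the oscillatory sums are
controlled by the contraction), so the radial divergence is genuinely visible in
$\varphi$. Making this separation rigorous — essentially an Abel-summation /
equidistribution argument for $\sum_m r_m^{k}P_k(\theta_m)$ against a monotone
divergent comparison sequence — is the technical heart of the proof; everything
else is the routine orbit-asymptotics bookkeeping sketched above.
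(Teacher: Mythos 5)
Your overall skeleton --- Neumann series, orbit decay $\nu^{(m)}(r)\asymp m^{-1/(2d)}$, comparison with $\sum_m m^{-s_0/d}$ for the divergence half --- is the same as the paper's, but both halves have genuine gaps. In part 1, the claim that $\tilde g^*=g-n^*$ is $O(|z|^{d+1})$ is false in general: Lemma \ref{n-pol}(2) only says that its initial form $r^kP_k(\theta)$ is non-resonant (mean zero), and $k$ can be as small as $1$ (e.g.\ $\tilde g=\Im z+\cdots$ for the family $A$). Moreover, even granting that order, your summability claim is wrong: $\sum_m m^{-(d+1)/(2d)}=\sum_m m^{-1/2-1/(2d)}$ has exponent $\le 1$ for every $d\ge 1$ and therefore \emph{diverges}; absolute summability along orbits requires vanishing to order at least $2d+1$, which gives exponent $1+1/(2d)>1$. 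The missing idea, which the paper uses, is a preliminary change of coordinates $z\mapsto ze^{2\pi i\varphi(z)}$ with $\varphi$ a \emph{polynomial} in $(z,\bar z)$, normalizing $F$ up to order $2d$, so that one may assume $g=n^*+\check g$ with $\check g=O(|z|^{2d+1})$. Then in case 1 one has $\tilde g=n^*-n+\check g=O(|z|^{2d+1})$, hence $|\tilde g\circ F^{(m)}(z)|\le ACm^{-(2d+1)/(2d)}$ uniformly in $z$ (by Lemma \ref{Contraction}(ii)), and Lemma \ref{uniformcv} applies.

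In part 2, the same preliminary normalization removes exactly the obstacle you yourself flag as the ``technical heart'': after it, $\tilde g(z)=\gamma|z|^{2k}+O(|z|^{2k+1})$ with $k\le d$ and $\gamma\ne0$, so near $0$ it has constant sign and $|\tilde g(z)|\ge\tilde\gamma|z|^{2k}$; since $|F^{(m)}(z)|=\nu^{(m)}(|z|)$, the Neumann series is, for each fixed small $z$, of the same nature as $\sum_m\bigl(\nu^{(m)}(|z|)\bigr)^{2k}\ge D\sum_m m^{-k/d}$ (Lemma \ref{Contraction}(iii)), which diverges because $k/d\le1$, contradicting Lemma \ref{ptwisecv}. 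No equidistribution or Abel-summation control of $\sum_m r_m^kP_k(\theta_m)$ is needed, and as written your sketch leaves precisely that non-cancellation step unproved, so your divergence half is incomplete. (A minor point: in the setting of Section \ref{topth} one has $|\lambda|=1$, so only the weak-contraction asymptotics $r_m\asymp m^{-1/(2d)}$ is relevant; the case $\rho<1$ you include plays no role here.)
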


\begin{proof}
 Without loss of generality we can assume that
$$F(z)=\lambda z(1+f(|z|^2))e^{2\pi i(n^*(|z|^2)+\check g(z))},\quad\hbox{\rm with}\quad \check g(z)=O(|z|^{2d+1}).$$
Indeed, we can always perform a preliminary change of coordinate
  $z\mapsto z e^{2\pi i\varphi(z)}$ where~$\varphi(z)$ is an
  appropriate \emph{polynomial} function of $(z,\bar z)$ so that~$F$
  is normalized up to any arbitrary order.

\smallskip
  
\noindent In order to be able to apply lemmas~\ref{ptwisecv}
and~\ref{uniformcv}, we shall estimate the size of the general term of
the series $\sum_{m=0}^\infty \tilde{g}\circ F^{(m)}$ with
\[ \tilde g(z) = n^*(|z|^2)-n(|z|^2)+\check g(z). \]
\noindent The main step is controling the decrease of $|F^{(m)}(z)|$:
as $F$ preserves the foliation ${\cal F}_0$ by circles, the norm $|F^{(m)}(z)|$ of any iterate depends only on $r=|z|$ :
$$|F(z)|=\nu(r):= r(1+f(r^2))\quad\hbox{and}\quad |F^{(m)}(z)|=\nu^{(m)}(r).$$
\begin{lemma}\label{Contraction}
There exist $r_0,C,D,K$ such that, for all $r\in ]0,r_0[$ and $m\ge 1$, 
\begin{equation*}
\begin{split}
&(i)\quad 0<\nu^{(m)}(r)<r, \\
&(ii)\quad  \nu^{(m)}(r)\le Cm^{-\frac{1}{2d}},\\
&(iii)\quad  m\ge Kr^{-2d}\Rightarrow \, \nu^{(m)}(r)\ge Dm^{-1/2d}.
\end{split}
\end{equation*}
\end{lemma}

\begin{proof}
\noindent The inversion $${\cal I}:r\mapsto U=r^{-p}$$ exchanging 0 and infinity, will allow us to estimate $\nu^{(m)}(r)$ by comparing the transform of $\nu(r)$ to a translation.
\smallskip

\noindent The positive integer $p$ and the positive real number $\tilde a$  being fixed, let
$$\nu_{p,\tilde a}(r)=r\left(1+\tilde ar^p\right)^{-\frac{1}{p}}\;.$$
One checks immediately that 
$${\cal I}\circ \nu_{p,\tilde a}\circ {\cal I}^{-1}(U)=U+\tilde a,$$
which implies that
$$\nu_{p,\tilde a}^{(m)}(r)=r(1+m\tilde ar^p)^{-\frac{1}{p}}.$$
Now, as $a<0$,\; if $0< a_-<2d |a|<a_+,\quad \exists r_0>0\; \hbox{such that for} \; 0<r<r_0$
$$\nu_{2d,a_+}(r)\le\nu(r)=r+ar^{2d+1}+O(r^{2d+2})\le \nu_{2d,a_-}(r)\, .$$
As $\nu_{a_-}$ and~$\nu$ are increasing functions that preserve
$]0,r_0[$, this implies that there exists $r_0>0$ such that
\[ \nu^{(m)}_{2d,a_-}(r)\le\nu^{(m)}(r)\le \nu^{(m)}_{2d,a_+}(r)\quad
 \text{for all} \; r\in\,]0,r_0[ \; \text{and}\; m\ge1.
\]
The explicit formula for $\nu_{2d,\tilde a}^{(m)}(r)$ concludes the proof. 
\end{proof}
\medskip

\noindent For the first part of proposition~\ref{tsc}, we have $\tilde
g = \check g + O(|z|^{2d+1}) = O(|z|^{2d+1})$, hence $|\check g(z)|\le A|z|^{2d+1}$ on
some disc near~$0$ and
$$\left|\check g\circ F^{(m)}(z)\right|\le ACm^{-\frac{2d+1}{2d}},$$
which entails the uniform convergence of the series $\sum_{m=0}^\infty
\left|\check{g}\circ F^{(m)}\right|$. We can thus conclude by lemma~\ref{uniformcv}.
\medskip

\noindent For the second part of proposition~\ref{tsc},
we have 
$$\tilde g(z)=\gamma |z|^{2k}+O(|z|^{2k+1}),\quad \text{\rm with}\;\;
k\le d\;\; \text{and} \; \; \gamma\neq 0.$$
In particular, choosing $\tilde\gamma = \frac1 2|\gamma|$,
\[
 \text{$\tilde g(z)$ does not change
    sign and $|\tilde g(z)| \ge \tilde\gamma |z|^{2k}$ for $|z|$ small enough.}
\]
Since $|F^{(m)}(z)| = \nu^{(m)}(|z|)$, it follows that
the Neumann series $\sum_{m=0}^\infty \tilde g\circ F^{(m)}(z)$
and the series $\sum_{m=0}^\infty \bigl(\nu^{(m)}(|z|)\bigr)^{2k}$ are
of the same nature for each $z$ close enough to~$0$.
\smallskip

\noindent By Lemma \ref{Contraction}{\it (iii)}, $m\ge Kr^{-2d}$ implies that $\bigl(\nu^{(m)}(|z|)\bigr)^{2k}\ge Dm^{-k/d}$.
As $k/d\le 1$, one concludes to the divergence of both series, which,
according to lemma~\ref{ptwisecv},  prevents the existence of an angularly tame semi-conjugacy.
\end{proof}

\section{Analytical theory} 
\subsection{The conservative case}\label{ConsCase}

Recall (section \ref{Nonu}) that this means that $F(z)=\lambda ze^{2\pi ig(z)}$ preserves individually each circle centered at 0. 
\smallskip

\noindent As any formal conjugacy $\Psi$ of $F$ to a normal form $N$ preserves the foliation ${\cal F}_0$ (section \ref{Nonu}, Corollary \ref{any}), if $N$ is convergent, $\Psi$ will be divergent as soon as there exist arbitrary small radii $r$ such that the restriction $F_r$ of $F$ to the circle ${|z|=r}$ is not analytically conjugate to a rotation, in particular for $F=A_{\lambda,0,d}$ and $F=B_{\lambda,0,d}$. Indeed  if convergent such a conjugacy would provide conjugacies $\Psi_r$ between $F_r$ and $N_{r'}$ for all small enough radii $r$ (figure 5).

\hskip-0.5cm
\includegraphics[scale=0.77]{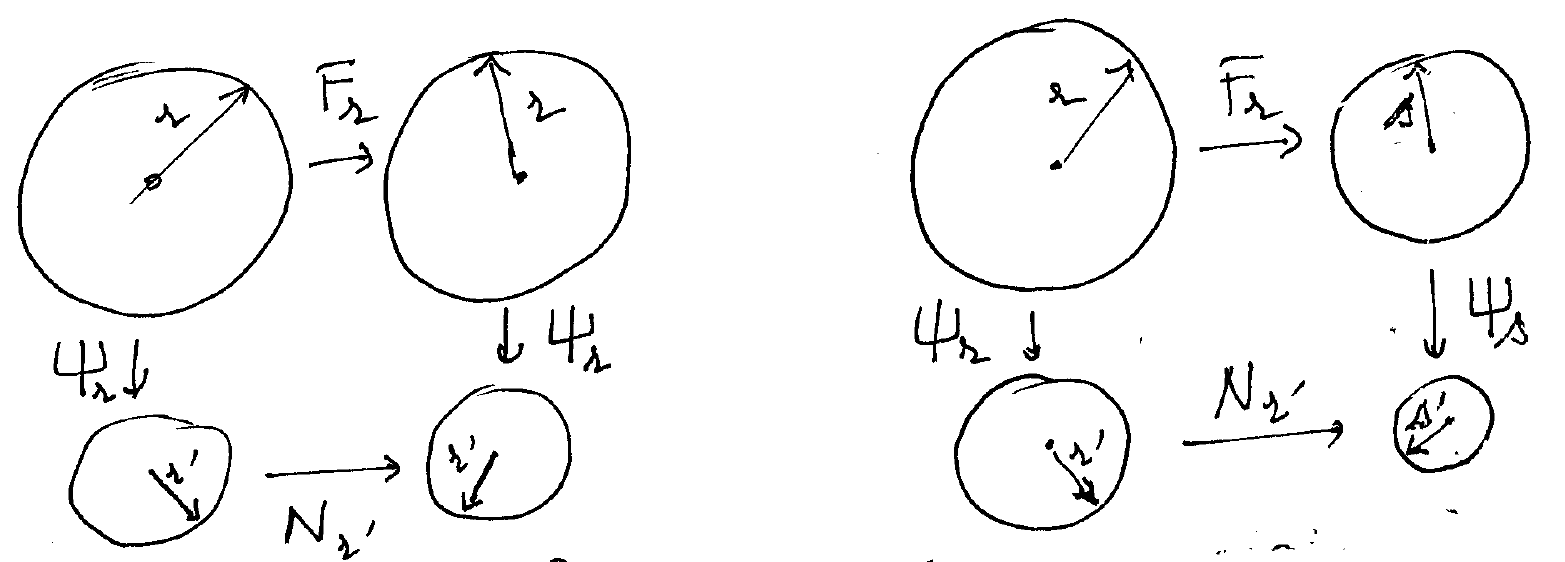}
\begin{center}
Figure 5: Conservative case.\hskip3cm Figure 6: dissipative case.
\end{center}

\subsection{Divergence implied by the holomorphic part $F^0$ of $F$}
Contrarily to what happens in the conservative case, the existence of an analytic conjugacy  $\Psi$ of $F$ to a normal form
$N=\Psi\circ F\circ \Psi^{-1}$ only implies, for each $r>0$, an identity $N_r=\Psi_{s}\circ F_r\circ \Psi_r^{-1}$ when restricting $N$ to the circle $|z|=r$ (figure 6). As soon as $s\not=r$, this does not a priori contradict the non conjugacy of~$F_r$ to a rotation.
\smallskip

\noindent Nevertheless, divergence may occur as is shown by the following Theorems {\sl in which contraction could be present but does not play any part in the proofs.}
\smallskip

\subsubsection{A criterion of divergence}
\noindent {\it In this section and the following, we choose to consider real analytic maps like $F(z)$ as series in two variables $z$ and $\bar z$ and hence we change the notation, writing $F(z,\bar z)$. We shall then note 
$F^0(z)=F(z,0)$ its holomorphic part. }

\begin{theorem} \label{hol1} Let $F(z,\bar z)=\lambda z(1+f(|z|^2))e^{2\pi ig(z,\bar z)}$ be a local analytic diffeomorphism from $(\R^2,0)$ to $(\R^2,0)$ such that the complex holomorphic map in one variable,  
$F^0(z)=\lambda ze^{2\pi ig(z,0)}$ be analytically non linearizable. Then any formal conjugacy $\Psi$ of $F(z,\bar z)$ to a normal form $N(z,\bar z)$ is divergent.
\end{theorem}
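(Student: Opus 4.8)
The plan is to exploit the fact that a formal conjugacy $\Psi$ of $F(z,\bar z)$ to a normal form $N(z,\bar z)$, when restricted to $\bar z=0$, must conjugate the holomorphic part $F^0(z)=F(z,0)$ to the holomorphic part $N^0(z)=N(z,0)$ of the normal form. First I would compute $N^0$: since $N(z,\bar z)=\lambda z(1+f(|z|^2))e^{2\pi i n(|z|^2)}$ (using Corollary \ref{any} to know that any normal form preserves ${\cal F}_0$, so it has this shape, possibly after composing with an $H$ of the form in Lemma \ref{Birk}), setting $\bar z=0$ kills $f(|z|^2)=f(z\bar z)$ and $n(|z|^2)=n(z\bar z)$ down to their constant terms, which are zero; hence $N^0(z)=\lambda z$ exactly. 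So $\Psi^0:=\Psi(z,0)$ would be a formal conjugacy of $F^0$ to its linear part $\lambda z$.

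The second step is to check that $\Psi^0$ is a genuine formal change of variable in the one-variable sense, i.e.\ a formal power series in $z$ alone with nonzero linear term, so that the hypothesis ``$F^0$ analytically non linearizable'' can be brought to bear. From Lemma \ref{pres} and Corollary \ref{any} we can write $\Psi(z,\bar z)=z(1+a(|z|^2))e^{2\pi i(\varphi^*(z,\bar z)+b(|z|^2))}$; setting $\bar z=0$ gives $\Psi^0(z)=z(1+a(0))e^{2\pi i(\varphi^*(z,0)+b(0))}=z\cdot e^{2\pi i\varphi^*(z,0)}$ (the constant terms $a(0),b(0)$ vanish), a formal series in $z$ with linear coefficient $1\ne0$. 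Thus $\Psi^0$ is invertible in $\C[[z]]$ and conjugates $F^0$ to $\lambda z$ as formal germs of one complex variable.

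The third step invokes the classical rigidity fact for holomorphic germs: a one-variable holomorphic germ $F^0$ fixing $0$ with multiplier $\lambda=e^{2\pi i\omega}$ on the unit circle admits a \emph{formal} linearization which is unique up to the scaling $z\mapsto cz$, and that formal linearization converges if and only if $F^0$ is analytically linearizable. Since by hypothesis $F^0$ is analytically non linearizable, the unique-up-to-scaling formal linearizer diverges; hence $\Psi^0$, being such a linearizer, diverges as an element of $\C[[z]]$. Finally, if $\Psi(z,\bar z)$ were convergent as a series in the two variables $z,\bar z$ on some polydisc, its restriction $\Psi^0(z)=\Psi(z,0)$ would converge as a series in $z$ — contradiction. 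Therefore $\Psi$ is divergent.

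\textbf{Main obstacle.} The only delicate point is making sure that restricting to $\bar z=0$ is legitimate at the formal level and that it genuinely produces the one-variable objects $F^0$, $N^0=\lambda z$, $\Psi^0$ with the expected properties — in particular that $N^0$ is \emph{exactly} linear (not merely linear up to higher order), which is what lets us identify $\Psi^0$ with a linearizer of $F^0$ rather than a conjugacy to some other germ. This is where one uses that $f$ and $n$ vanish at the origin and that the only resonant monomials are powers of $|z|^2$ (equation \eqref{eqNRlambda}), so that the normal form contains no holomorphic nonlinearity at all. Once this is pinned down, the rest is the standard Bryuno--Yoccoz-type dichotomy for one-dimensional holomorphic germs, which we may quote.
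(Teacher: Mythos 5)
Your overall strategy is the same as the paper's (restrict the conjugacy relation to $\bar z=0$, deduce that $\Psi^0$ formally linearizes $F^0$, contradict non-linearizability), but the very first sentence of your proposal --- that a formal conjugacy $\Psi$ of $F$ to $N$, ``when restricted to $\bar z=0$, must conjugate the holomorphic part $F^0$ to the holomorphic part $N^0$'' --- is precisely the point that has to be proved, and your proposal never proves it. In the real-analytic formalism, $(\Psi\circ F)(z,\bar z)=\Psi\bigl(F(z,\bar z),\bar F(z,\bar z)\bigr)$, so setting $\bar z=0$ gives $\Psi\bigl(F^0(z),\bar F(z,0)\bigr)$; for a general map $\bar F(z,0)=\sum_k\overline{F_{0k}}\,z^k$ does not vanish, and then $(\Psi\circ F)^0\neq\Psi^0\circ F^0$ (similarly for $(N\circ\Psi)^0$ versus $N^0\circ\Psi^0$). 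Establishing the two identities $(\Psi\circ F)^0=\Psi^0\circ F^0$ and $(N\circ\Psi)^0=N^0\circ\Psi^0$ is essentially the entire content of the paper's proof, which stresses (with a footnote) that they are true for the maps considered here but certainly not for general maps: one uses that every monomial of $F$ and of $\Psi$ carries a factor $z$ --- because $F$ preserves ${\cal F}_0$ and, by Corollary \ref{any}, $\Psi$ has the product form $z(1+a(|z|^2))e^{2\pi i(\varphi(z,\bar z)+b(|z|^2))}$ --- so that $\bar F(z,0)\equiv 0$ and $\bar\Psi(z,0)\equiv 0$, the key computation being $\varphi(F,\bar F)^0=\varphi(F^0,0)$, which also exploits that $g$ and $\varphi$ are real-valued.

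Moreover, your ``main obstacle'' paragraph locates the difficulty in the wrong place: that $N^0(z)=\lambda z$ exactly is immediate (a series in $|z|^2=z\bar z$ retains only its constant term at $\bar z=0$), and the non-resonance condition \eqref{eqNRlambda} plays no role in the restriction step --- the delicate point is the compatibility of ``taking the holomorphic part'' with composition, as above. Once that identity is supplied, the rest of your argument is correct and coincides with the paper's; note that the detour through uniqueness-up-to-scaling of the formal linearizer is not needed: convergence of $\Psi$ (as a series in the two independent variables) gives convergence of $\Psi^0$, and $\Psi^0\circ F^0=\lambda\Psi^0$ then directly yields analytic linearizability of $F^0$, the desired contradiction.
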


\begin{proof} 
From section \ref{Nonu} we know that the most general conjugacy $\Psi$ and normal form $N$ have the form
\begin{equation*}
\begin{split}
\Psi(z,\bar z)&=z(1+a(|z|^2))e^{2\pi i\varphi(z,\bar z)+b(|z|^2)},\\
N(z,\bar z)&=\lambda z(1+\alpha(|z|^2))e^{2\pi i\beta(|z|^2)}\; ,
\end{split}
\end{equation*}
hence
$\Psi^0(z)=ze^{2\pi i\varphi(z,0)}$ and $N^0(z)=\lambda z\; .$ 
\smallskip

\noindent The proof consists in the following identities, true for the maps we consider but certainly not for general maps\footnote{Thanks to Abed Bounemoura for insisting on this.}: 
$$(\Psi\circ F)^0=\Psi^0\circ F^0\quad \hbox{and}\quad (N\circ \Psi)^0=N^0\circ \Psi^0.\eqno(*)$$
Indeed, as $\Psi\circ F=N\circ \Psi$, this implies that $\Psi^0\circ F^0=N^0\circ \Psi^0$; 
convergence of $\Psi(z,\bar z)$ (and hence $N(z,\bar z)$) implying\footnote{Consider $\Psi$ as a function of two independent variables $z$ and $\bar z$.} that of $\Psi^0$, we would conclude to the analytic linearizability of $F^0$, a contradiction. 
\smallskip

\noindent The proof of $(*)$ consists in the following explicit computations which use the fact that $g(z,\bar z)$ and $\varphi(z,\bar z)$ are both real valued: 
\begin{equation*}
\begin{split}
(N\circ \Psi)(z,\bar z)&=\lambda z\bigl(1+a(|z|^2)\bigr)e^{2\pi i\bigl(\varphi(z,\bar z)+b(|z|^2)\bigr)}\bigl(1+\alpha(|\Psi(z,\bar z)|^2)\bigr)e^{2\pi i\beta(|\Psi(z,\bar z)|^2)},\\
(\Psi\circ F)(z,\bar z)&=\lambda z\bigl(1+f(|z|^2)\bigr)e^{2\pi ig(z,\bar z)}\bigl(1+a(|F(z,\bar z)|^2)\bigr)e^{2\pi i\bigl(\varphi(F(z,\bar z),\bar F(z,\bar z))+b(|F(z,\bar z)|^2)\bigr)}\, . 
\end{split}
\end{equation*}
Hence
\begin{equation*}
(N\circ \Psi)^0(z)=\lambda ze^{2\pi i\varphi(z,0)},\quad 
(\Psi\circ F)^0(z)=\lambda ze^{2\pi ig(z,0)}e^{2\pi i\varphi(F,\bar F)^0(z)}\, , 
\end{equation*}
while
\begin{equation*}
(N^0\circ \Psi^0)(z)=\lambda ze^{2\pi i\varphi(z,0)},\quad 
(\Psi^0\circ F^0)(z)=\lambda ze^{2\pi ig(z,0)}e^{2\pi i\varphi(F^0(z),0)}\, . 
\end{equation*}
It only remains to prove that $\varphi(F,\bar F)^0=\varphi(F^0,0)$: 
\smallskip

\noindent If $\varphi(z,\bar z)=\sum_{jk}c_{jk}z^j\bar z^k$, 
$$\varphi(F,\bar F)(z,\bar z)=\sum_{jk}c_{jk}\lambda^j{\bar\lambda}^kz^j{\bar z}^k\bigl(1+f(|z|^2)\bigr)^{j+k}e^{2\pi i(j-k)g(z,\bar z)},$$
hence $\; \varphi(F,\bar F)^0(z)=\sum_jc_{j_0}\lambda^jz^je^{2\pi ijg(z,0)}=\varphi(F^0(z),0)$.

\end{proof}

\begin{corollary}\label{A-div}
If $\omega$ is not a Brjuno number, any formal conjugacy of $A_{e^{2\pi i\omega},a,d}$ to a normal form diverges.
\end{corollary}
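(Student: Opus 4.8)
The plan is to apply Theorem~\ref{hol1} directly to the family $A_{e^{2\pi i\omega},a,d}$, so the only work is to identify the holomorphic part $F^0$ and invoke the classical (non-)linearizability theory for germs of one-variable holomorphic diffeomorphisms. Recall that in complex notation $A_{\lambda,a,d}(z)=\lambda z(1+a|z|^{2d})e^{\pi(z-\bar z)}$; writing $|z|^2=z\bar z$ and treating $z,\bar z$ as independent variables, one has
$$A^0_{\lambda,a,d}(z)=A_{\lambda,a,d}(z,0)=\lambda z\,e^{\pi z}.$$
Indeed the factor $(1+a z\bar z^{\,d}\cdot\text{(stuff)})$ contributes $1$ at $\bar z=0$ and the exponent $\pi(z-\bar z)$ becomes $\pi z$; so $F^0$ is the one-variable germ $z\mapsto\lambda z\,e^{\pi z}=\lambda z+O(z^2)$ with multiplier $\lambda=e^{2\pi i\omega}$.

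Next I would recall the relevant fact from one-dimensional holomorphic dynamics: for a germ $z\mapsto\lambda z+O(z^2)$ with $\lambda=e^{2\pi i\omega}$, $\omega$ irrational, if $\omega$ is \emph{not} a Brjuno number then the germ is analytically linearizable if and only if it is ``trivial'' in a suitable sense; more precisely, by the work of Yoccoz (optimality of the Brjuno condition), a \emph{generic} — and in particular any nonlinear polynomial-type — germ with non-Brjuno multiplier is \emph{not} analytically linearizable. The cleanest route here is to note that $z\mapsto\lambda z\,e^{\pi z}$ is conjugate (by an affine change $z\mapsto cz$) to a map with nonzero quadratic coefficient, and to quote that for non-Brjuno $\omega$ the quadratic polynomial $z\mapsto\lambda z+z^2$ (equivalently $z\mapsto\lambda z(1+z)$) is not linearizable — this is precisely Yoccoz's theorem. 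One should check that $z\mapsto\lambda z e^{\pi z}$ genuinely falls under this: it is not linear (the coefficient of $z^2$ is $\pi\lambda\neq0$), and Yoccoz's non-linearizability statement for non-Brjuno multipliers applies to any germ with a nonvanishing second-order term, not merely to the quadratic polynomial. Granting this, $F^0=A^0_{e^{2\pi i\omega},a,d}$ is analytically non-linearizable whenever $\omega$ is not Brjuno.

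Finally I would conclude: the hypotheses of Theorem~\ref{hol1} are met with $F=A_{e^{2\pi i\omega},a,d}$ (this map preserves $\mathcal F_0$, is a local analytic diffeomorphism, and its holomorphic part is non-linearizable), hence any formal conjugacy $\Psi$ of $A_{e^{2\pi i\omega},a,d}$ to a normal form is divergent. This covers both the contractive case $0<\rho<1$ and the conservative-radius case $\rho=1$, and all $a<0$, $d\ge1$, since none of these parameters enters $F^0$ beyond the multiplier $\lambda$ and the (irrelevant) higher-order holomorphic terms.

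The main obstacle is purely expository rather than mathematical: one must state precisely \emph{which} one-variable non-linearizability result is being invoked and verify that $z\mapsto\lambda z e^{\pi z}$ satisfies its hypotheses. The safe formulation is Yoccoz's theorem in the sharp form: if $\omega\notin\Q$ is not a Brjuno number, then every germ $z\mapsto\lambda z+a_2z^2+\cdots$ with $\lambda=e^{2\pi i\omega}$ and $a_2\neq0$ fails to be analytically linearizable. Since here $a_2=\pi\lambda\neq0$, we are done; if one prefers to avoid the general germ statement, an auxiliary remark reduces the germ $\lambda z e^{\pi z}$ to the quadratic polynomial by the standard argument that a non-linearizable germ cannot be conjugate to a linearizable one and that linearizability of $\lambda z+a_2z^2+\cdots$ depends only on $\lambda$ (not on $a_2\neq0$, which can be scaled to~$1$), for which the model case is Yoccoz's quadratic polynomial result.
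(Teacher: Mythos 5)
Your overall strategy is exactly the paper's: identify $F^0(z)=A^0_{\lambda,a,d}(z)=\lambda z e^{\pi z}$ and apply Theorem~\ref{hol1}. But the one-variable step you rely on contains a genuine gap: the ``sharp form'' of Yoccoz's theorem you quote --- that for non-Brjuno $\omega$ \emph{every} germ $\lambda z+a_2z^2+\cdots$ with $a_2\neq0$ fails to be linearizable --- is false. For any irrational $\omega$ whatsoever, the germ $h^{-1}\circ R_\lambda\circ h$ with $h(z)=z+z^2$ is linearizable by construction and has quadratic coefficient $\lambda-\lambda^2\neq0$; so linearizability of a germ with $a_2\neq 0$ does \emph{not} depend only on $\lambda$, and $a_2$ cannot simply ``be scaled to $1$'' to reduce to the quadratic polynomial: the affine conjugacy $z\mapsto cz$ sends $\lambda z e^{\pi z}$ to $\lambda z e^{(\pi/c)z}$, which is never the polynomial $\lambda z(1+z)$. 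Yoccoz's optimality theorem concerns the quadratic polynomial (and, with more work, certain specific families), not arbitrary germs with nonvanishing second-order term, so your argument does not establish non-linearizability of $\lambda z e^{\pi z}$.

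The missing ingredient is precisely what the paper invokes: Yoccoz's result gives the equivalence ``linearizable $\iff$ Brjuno'' for the truncation $z(1+\pi z)$, and it is Geyer's theorem \cite{G} that extends this equivalence to the entire map $F^0(z)=\lambda z e^{\pi z}$ itself (which is linearly conjugate to $\lambda z e^{z}$, a member of the complexified Arnold/exponential family treated by Geyer). Once you replace your false general-germ statement by this citation (or by any other proof of non-linearizability of $\lambda ze^{\pi z}$ for non-Brjuno $\omega$), the rest of your argument --- computing $F^0$, noting that $a$, $d$ and the contraction play no role in $F^0$, and concluding via Theorem~\ref{hol1} --- coincides with the paper's proof.
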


\begin{proof}
  We have $A_{\lambda,a,d}^0(z)=\lambda ze^{\pi z}$ with
  $\lambda=e^{2\pi i\omega}$, $\omega$ an irrational number.  Yoccoz
  had proved (see \cite{Y,PM}) that if one replaces
  $F^0(z)=\lambda ze^{\pi z}$ by the beginning $z(1+\pi z)$ of its
  Taylor expansion, the linearization converges if and only if
  $\omega$ is a Brjuno number. It was later on proved by Lukas Geyer
  (see \cite{G})\footnote{Thanks to Ricardo P\'erez-Marco for this
    reference} that the same is true for $F^0(z)$.
\end{proof}

\medskip

\noindent {\bf Remark.} As $B_{\lambda,a,d}^0(z)=\lambda z$, the above result does not apply to $B_{\lambda,a,d}$.
Notice that the sub-family of the Arnold family entering in the definition of $B_{\lambda,a,d}$ is much closer to a family of rotations than the one entering in the definition of $A_{\lambda,a,d}^0=\lambda z$.  Nevertheless the following strenghtening of Theorem \ref{hol1} allows concluding also for the maps $B_{\lambda,a,d}$.
\subsubsection{A more refined criterion of divergence}
\noindent Recall the homological equation for a special normalization $\Phi(z)=ze^{2\pi i\varphi(z)}$
which conjugates the local (formal) diffeomorphism $F(z)=\lambda z\bigl(1+f(|z|^2)\bigr)e^{2\pi ig(z)}$ to the normal form $N(z)=\lambda z\bigl(1+f(|z|^2)\bigr)e^{2\pi in(|z|^2)}$, i.e.\ such that  $\Phi\circ F=N\circ \Phi$~:
\begin{equation*}
\begin{split}
&g(z)-n(|z|^2)+\varphi\circ F(z)-\varphi(z)=0, \quad\hbox{that is}\\
&\sum_{j+k\ge 1}g_{jk}z^j\bar z^k-\sum_{s\ge 1}n_s|z|^{2s}\\
+&\sum_{j+k\ge 1}\varphi_{jk}\left[\lambda^j\bar\lambda^k\bigl(1+f(|z|^2)\bigr)^{j+k}e^{2\pi i(j-k)g(z)}-1\right]z^j\bar z^k
=0\, .
\end{split}
\end{equation*}
This implies that, if $p\not=q$, the coefficient $\varphi_{pq}$ satisfies
\begin{equation*}
(\lambda^p\bar\lambda^q-1)\varphi_{pq}=-g_{pq}+R,
\end{equation*}
where $R$ is the sum of all coefficients of $z^p\bar z^q$ in the expression
$$\sum_{1\le j+k<p+q}\varphi_{jk}\left[\lambda^j\bar\lambda^k\bigl(1+f(|z|^2)\bigr)^{j+k}e^{2\pi i(j-k)g(z)}-1\right]z^j\bar z^k\, .
$$
\begin{lemma}
Suppose there exists $\rho\le 1$ such that $g_{pq}\not=0$ implies $p-q\le \rho(p+q)$. Then the same is true for the coefficients $\varphi_{pq}$ of $\varphi$.
\end{lemma}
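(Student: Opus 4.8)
The statement is about propagation of a ``cone condition'' on the exponents $(p,q)$ from the data $g$ to the unknown $\varphi$ in the homological equation, so the natural tool is induction on the total degree $N=p+q$, using the recursive formula displayed just before the lemma. The plan is to prove by strong induction on $N\ge 1$ that every coefficient $\varphi_{pq}$ with $p+q=N$ and $p-q>\rho(p+q)$ vanishes; equivalently, $\varphi_{pq}\neq 0 \Rightarrow p-q\le\rho(p+q)$. First I would fix notation: recall that for $p=q$ the coefficients $\varphi_{pp}$ are \emph{free}, but $p=q$ automatically satisfies $p-q=0\le\rho(p+q)$ since $\rho\ge 0$ (here one should note $\rho\le 1$ is assumed; the relevant sign is $\rho\geq 0$, which holds since otherwise the hypothesis on $g$ forces $g\equiv 0$ and the statement is trivial). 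So the only thing to check is the case $p\neq q$, where $\varphi_{pq}$ is determined by $(\lambda^p\bar\lambda^q-1)\varphi_{pq}=-g_{pq}+R$, and by \eqref{eqNRlambda} the factor $\lambda^p\bar\lambda^q-1$ is nonzero, so $\varphi_{pq}\neq 0$ iff $-g_{pq}+R\neq 0$.

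\textbf{The inductive step.} Suppose the cone condition holds for all $\varphi_{jk}$ with $j+k<N$, and take $p,q$ with $p+q=N$, $p\neq q$, and suppose $\varphi_{pq}\neq 0$; I must show $p-q\le\rho N$. Since $\varphi_{pq}\neq 0$, either $g_{pq}\neq 0$ or $R\neq 0$. If $g_{pq}\neq 0$, the hypothesis on $g$ gives $p-q\le\rho(p+q)=\rho N$ directly. If $R\neq 0$, then $z^p\bar z^q$ appears with nonzero coefficient in at least one term of
\[
\sum_{1\le j+k<N}\varphi_{jk}\left[\lambda^j\bar\lambda^k\bigl(1+f(|z|^2)\bigr)^{j+k}e^{2\pi i(j-k)g(z)}-1\right]z^j\bar z^k .
\]
Here the key combinatorial observation is how the exponents transform: a monomial $z^j\bar z^k$ multiplied by a term from the bracket produces monomials of the form $z^{j+a}\bar z^{k+b}$ where the extra factor $z^a\bar z^b$ comes from expanding $(1+f(|z|^2))^{j+k}$ (which contributes only balanced factors $|z|^{2m}$, i.e.\ $a=b$) and from $e^{2\pi i(j-k)g(z)}$, whose expansion is a power series in the $g_{tu}z^t\bar z^u$'s. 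So I need the sub-claim: in $e^{2\pi i(j-k)g(z)}$, every monomial $z^a\bar z^b$ that appears satisfies $a-b\le\rho(a+b)$ when $j\ge k$ (and $b-a\le\rho(a+b)$ when $j\le k$, but in fact I only need the relevant one-sided bound). This follows because such a monomial is a finite product of factors $z^{t_i}\bar z^{u_i}$ with each $g_{t_iu_i}\neq 0$, hence each $t_i-u_i\le\rho(t_i+u_i)$; summing, $a-b=\sum(t_i-u_i)\le\rho\sum(t_i+u_i)=\rho(a+b)$. Combining: if $\varphi_{jk}\neq 0$ then $j-k\le\rho(j+k)$ by induction, and the extra exponents satisfy $a-b\le\rho(a+b)$ with $a=b$ allowed as a degenerate case (from the $f$-factor or from $a=b=0$), so the resulting monomial $z^{p}\bar z^{q}=z^{j+a}\bar z^{k+b}$ has $p-q=(j-k)+(a-b)\le\rho(j+k)+\rho(a+b)=\rho(p+q)=\rho N$. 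One subtlety: when $j<k$, the factor $e^{2\pi i(j-k)g(z)}=e^{-2\pi i(k-j)g(z)}$ expands using $g$ but the sign of $(j-k)$ flips the roles; here one uses $g_{kj}=\bar g_{jk}$, so $g$ also satisfies the \emph{symmetric} condition $g_{tu}\neq 0\Rightarrow u-t\le\rho(t+u)$, hence a monomial $z^a\bar z^b$ from $e^{-2\pi i(k-j)g(z)}$ still satisfies $a-b\le\rho(a+b)$ --- wait, more carefully, it is built from factors $z^{t_i}\bar z^{u_i}$ with $g_{t_iu_i}\neq 0$, and regardless of the overall coefficient $(j-k)^s/s!$ the monomial content is the same product, so the bound $a-b\le\rho(a+b)$ holds unconditionally once $g$ satisfies both one-sided bounds, which it does by $g_{kj}=\bar g_{jk}$.

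\textbf{Main obstacle.} The only place requiring genuine care is bookkeeping the exponent arithmetic through the expansion of $(1+f(|z|^2))^{j+k}e^{2\pi i(j-k)g(z)}$ and through the minus $1$: the ``$-1$'' subtracts the constant term, which is harmless since a constant contributes nothing to a monomial $z^p\bar z^q$ with $p+q=N\ge 1$. So the proof reduces to the elementary additivity of the functional $(p,q)\mapsto (p-q)-\rho(p+q)$ under monomial multiplication: it is $\le 0$ on all $g$-monomials (and on $|z|^{2m}$, where it equals $-2\rho m\le 0$), and by induction $\le 0$ on all nonzero-$\varphi$ monomials of lower degree, hence $\le 0$ on their products, which are exactly the monomials that can appear in $R$. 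I expect the write-up to be short, with the bulk being the verification that $e^{2\pi i(j-k)g(z)}$ and $(1+f(|z|^2))^{j+k}$ contribute only monomials on which this functional is nonpositive; there is no analytic difficulty, only a careful multi-index argument, and the induction closes cleanly because every term feeding into $\varphi_{pq}$ has strictly smaller total degree.
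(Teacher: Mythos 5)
Your proof is correct and follows essentially the same route as the paper: induction on the total degree via the recursion $(\lambda^p\bar\lambda^q-1)\varphi_{pq}=-g_{pq}+R$, observing that every monomial of $R$ is a product of factors (a lower-order $\varphi$-monomial, balanced factors $|z|^{2m}$ from $f$, and $g$-monomials from the exponential) on each of which $(p,q)\mapsto(p-q)-\rho(p+q)$ is nonpositive, hence nonpositive on the product --- which is exactly the paper's one-line justification, spelled out. Your extra bookkeeping (the $p=q$ case, the sign of $j-k$, the ``$-1$'' term) only makes explicit what the paper leaves implicit.
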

\begin{proof}
Of course the lemma is empty if $\rho=1$.
We suppose by induction that for any couple $(j,k)$ such that $1\le j+k<p+q$, 
$\varphi_{jk}\not=0$ implies $(j-k)\le \rho(j+k)$. The formula above shows that the property is still true for $\varphi_{pq}$. Indeed, each term of $R$ is a product of terms $A_iz^{p_i}\bar z^{q_i}$ each of which satisfies $p_i-q_i\le \rho(p_i+q_i)$.
\end{proof}
\medskip

\noindent{\it Notations.} 
Let $\rho=\frac{N}{M}=\sup_{g_{pq}\not=0}\frac{p-q}{p+q}$. Supposing 
$\frac{N}{M}$ irreducible, and noting $z=re^{2\pi it}$, we define 
$$Z=r^{M}e^{2\pi iNt}$$ 
and consider the pairs $(p_k,q_k)$ of non negative integers such that  
$$m_k=p_k+q_k=kM,\quad n_k=p_k-q_k=kN.$$
Notice that such pairs need not exist for all $k\ge 1$: for example, if $M=2,N=1$, $(p_1,q_1)$ is not a pair of integers. 
\noindent Let $F^0(Z)$ and $\Phi^0(Z)$ be defined by
$$F^0(Z)=\lambda^NZe^{2\pi iNg^0(Z)},\quad \Phi^0(Z)=Ze^{2\pi iN\varphi^0(Z)},$$
where
$$g^0(Z)=\sum_{k}g_{p_kq_k}z^{p_k}\bar z^{q_k}=\sum_{k}g_{p_kq_k}Z^k,\quad 
\varphi^0(Z)=\sum_{k}\varphi_{p_kq_k}z^{p_k}\bar z^{q_k}=\sum_{k}\varphi_{p_kq_k}Z^k,$$
the sums being taken over the set of integers $k$ such that $(p_k,q_k)$ is defined.
\begin{lemma}\label{lin} 
If $\rho=\frac{N}{M}$, the homological equation implies
$$g^0(Z)+\varphi^0\circ F^0(Z)-\varphi^0(Z)=0.$$
In other words, $\Phi^0(Z)$ linearizes $F^0(Z)$:
$$\Phi^0\circ F^0=L\circ \Phi^0,$$
where $L(Z)=\lambda^N Z.$ 
\end{lemma}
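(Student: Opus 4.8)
The plan is to extract from the full homological equation exactly the sub-collection of monomials $z^p\bar z^q$ whose exponents satisfy $p-q = \rho(p+q)$, i.e.\ lie on the extremal line of slope $\rho=N/M$, and to show that these monomials close up into an equation of their own. First I would rewrite the homological equation
\[
g(z)-n(|z|^2)+\sum_{j+k\ge 1}\varphi_{jk}\Bigl[\lambda^{j-k}\bigl(1+f(|z|^2)\bigr)^{j+k}e^{2\pi i(j-k)g(z)}-1\Bigr]z^j\bar z^k=0
\]
and observe three things about how the operation ``multiply two monomials'' interacts with the quantity $p-q-\rho(p+q)$: (a) the factors $|z|^{2s}$ and the even powers of $f(|z|^2)$ contribute $(0,0)$, so they are neutral for this linear functional; (b) by the preceding Lemma every monomial occurring in $\varphi$ (and, by hypothesis, in $g$) has $p-q\le\rho(p+q)$, so all contributions to the functional are $\le 0$; (c) each factor $e^{2\pi i(j-k)g(z)}$ expands into products of monomials of $g$, each of which again satisfies $p-q\le\rho(p+q)$, with equality only for the extremal monomials $g_{p_k q_k}Z^k$. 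Hence, collecting in the homological equation only the terms for which the total exponent pair $(p,q)$ satisfies $p-q=\rho(p+q)$ forces: the $-n(|z|^2)$ term to drop out (it has $p-q=0<\rho(p+q)$ unless $\rho=0$, the trivial case); in the bracket, only the monomial $1$ (from expanding the product, giving the $-\varphi_{jk}z^j\bar z^k$ piece) and, from $\lambda^{j-k}e^{2\pi i(j-k)g(z)}$, only the term where every $g$-factor is extremal can survive when $(j,k)$ is itself extremal; and the $(1+f)^{j+k}$ factor must be replaced by its constant term $1$.

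The second step is to read off what ``only extremal $g$-factors survive'' gives: if $(j,k)=(p_\ell,q_\ell)$ is extremal, then $j-k=\ell N$, and the product of $\ell N$ copies of $g^0(Z)/$ (more precisely, $e^{2\pi i(j-k)g(z)}$ restricted to extremal monomials equals $e^{2\pi i\ell N g^0(Z)}$, since the non-extremal monomials of $g$ strictly decrease the functional and thus cannot recombine back onto the line). Therefore the extremal part of $\varphi_{jk}\lambda^{j-k}e^{2\pi i(j-k)g(z)}z^j\bar z^k$ is $\varphi_{p_\ell q_\ell}\lambda^{\ell N}Z^\ell e^{2\pi i\ell N g^0(Z)}$, which is exactly the coefficient-extraction of $\varphi^0\bigl(\lambda^N Z e^{2\pi i N g^0(Z)}\bigr)=\varphi^0\circ F^0(Z)$. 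Similarly the extremal part of $g(z)$ is $g^0(Z)$ and of $\sum\varphi_{jk}z^j\bar z^k$ is $\varphi^0(Z)$. Summing the extremal parts of all terms in the homological equation yields
\[
g^0(Z)+\varphi^0\circ F^0(Z)-\varphi^0(Z)=0.
\]
Exponentiating (multiplying by $2\pi i N$ and using $\Phi^0(Z)=Ze^{2\pi i N\varphi^0(Z)}$, $F^0(Z)=\lambda^N Z e^{2\pi i N g^0(Z)}$) gives $\Phi^0\circ F^0 = L\circ\Phi^0$ with $L(Z)=\lambda^N Z$, exactly as in the proof of Theorem \ref{hol1}.

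The main obstacle I anticipate is the bookkeeping in step one: making rigorous the claim that the extremal part of a product is the product of the extremal parts, and in particular that non-extremal monomials of $g$ entering $e^{2\pi i(j-k)g(z)}$ can never recombine to land back on the extremal line. This is where one must use that $\rho$ is the \emph{supremum} (so the functional $p-q-\rho(p+q)$ is $\le 0$ on every monomial in sight and additive under multiplication, hence a product is extremal iff every factor is), together with the previous Lemma guaranteeing the same bound propagates to $\varphi$. A secondary care point is the substitution $z^{p_k}\bar z^{q_k}=Z^k$, which is only literally a monomial identity on the ray of extremal exponents — one should phrase the whole argument as an identity between the ``extremal generating functions'' in the single variable $Z$, noting that distinct extremal pairs $(p_k,q_k)$ give distinct powers $Z^k$ because $N/M$ is in lowest terms, so no collisions occur. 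Everything else is the routine expansion already carried out in the proof of Theorem \ref{hol1}.
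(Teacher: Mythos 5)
Your proposal is correct and follows essentially the same route as the paper: you restrict the homological equation to the extremal monomials with $p-q=\rho(p+q)$, use that the functional is additive under multiplication and $\le 0$ on every monomial of $g$ (by definition of $\rho$ as a supremum) and of $\varphi$ (by the preceding lemma), so that only products of extremal factors survive, the $n(|z|^2)$ and $f(|z|^2)$ contributions drop, and the surviving part reads as $g^0(Z)+\varphi^0\circ F^0(Z)-\varphi^0(Z)=0$, i.e.\ the linearization equation. One wording slip: the factors $|z|^{2s}$ (and the $f$-terms) are not ``neutral'' for the functional $p-q-\rho(p+q)$ --- they strictly decrease it when $\rho>0$ --- which is precisely why they are excluded, as your later conclusion correctly states.
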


\begin{proof}
Developing the homological equation we get
\begin{multline*}
  \hspace{-1.3em}
  \sum_{j,k}g_{jk}z^j\bar z^k-\sum_sn_s|z|^{2s}\\[.7ex]
  + \sum_{j,k}\varphi_{jk}
\bigg[ \lambda^j\bar\lambda^k \Big(1+\sum_uf_u|z|^{2u}\Big)^{j+k}
\sum_n\frac{(2\pi i(j-k))^n}{n!}
\Big(\sum_{v,w}g_{vw}z^v\bar z^w\Big)^n -1\bigg]
z^j\bar z^k \\[.7ex]
 =0. \hspace{-2em}
\end{multline*}
The general term $z^p\bar z^q$ in the last line has the form
$$p=j+u_1+u_2+\cdots+u_{j+k}+v_1+v_2+\cdots +v_n,\quad q=k+u_1+u_2+\cdots+u_{j+k}+w_1+w_2\cdots+w_n.$$
As 
$j-k\le \rho(j+k)\quad \hbox{and}\quad \forall i, v_i-w_i\le \rho(v_i+w_i),$
the only possiblity for achieving $p-q=\rho(p+q)$ is
$$j-k=\rho(j+k)\quad, \forall i, \, u_i=0\quad  \hbox{and}\quad \forall j,\,  v_j-w_j=\rho(v_j+w_j).$$ 
Hence, restricting the summations to those pairs of indices which satisfy the above identities $j-k=\rho(j+k)$ and $v-w=\rho(v+w)$, 
we get
\begin{equation*}
\sum_{j,k}g_{jk}z^j\bar z^k
+\sum_{j,k}\varphi_{jk}\left[\lambda^j\bar\lambda^k\sum_n\frac{(2\pi i(j-k))^n}{n!}\left(\sum_{v,w}g_{vw}z^v\bar z^w\right)^n-1\right]z^j\bar z^k=0,
\end{equation*}
that is
\begin{equation*}
\sum_{l\ge 1}g_{p_lq_l}Z^l+\sum_{l\ge 1}\varphi_{p_lq_l}\left[\lambda^{lN}\exp\left(2\pi ilN\sum_{s\ge 1}g_{p_sq_s}Z^s\right)-1\right]Z^l=0,
\end{equation*}
or
$$g^0(Z)+\varphi^0\left(\lambda^NZ\exp\left(2\pi iNg^0(Z)\right)\right)-\varphi^0(Z)=0,$$
which is equivalent to the linearization equation
$$\Phi^0\circ F^0(Z)=\lambda^N\Phi^0(Z).$$
\end{proof} 
\begin{theorem}\label{hold}
Under the hypotheses of lemma \ref{lin}, if the the holomorphic germ $F^0(Z)=\lambda^Ne^{2\pi iNg^0(Z)}$ is not holomorphically linearizable, any formal conjugacy $\Psi$ of the germ $F(z)=\lambda z\bigl(1+f(|z|^2)\bigr)e^{2\pi ig(z)}$ to a normal form is divergent.
\end{theorem}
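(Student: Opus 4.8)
The plan is to reduce Theorem \ref{hold} to Theorem \ref{hol1} by a change of variables, essentially transferring the argument of the previous section from the ``naive'' holomorphic part $F(z,0)$ to the ``slanted'' holomorphic part $F^0(Z)$ singled out by the slope $\rho = N/M$. The key observation is that Lemma \ref{lin} already does the hard analytic work: it shows that whenever $\varphi$ solves the homological equation, the restriction $\varphi^0(Z)=\sum_k\varphi_{p_kq_k}Z^k$ obtained by keeping only the monomials of maximal slope linearizes $F^0(Z)$. So I would argue by contraposition: suppose $\Psi$ is a convergent formal conjugacy of $F$ to some normal form $N$. By Corollary \ref{any} and the discussion in Section \ref{Nonu}, $\Psi$ preserves $\mathcal F_0$ and factors as $\Psi = H\circ\Phi^*$ with $\Phi^*$ the basic special conjugacy; by Lemma \ref{normalization}, convergence of $\Psi$ forces convergence of $\Phi^*(z)=ze^{2\pi i\varphi^*(z)}$, hence of the two-variable series $\varphi^*(z,\bar z)$.

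Next I would extract $\varphi^0$ from $\varphi^*$ and show it converges. Since $\varphi^*(z,\bar z)=\sum_{p\neq q}\varphi_{pq}z^p\bar z^q$ converges as a series in the two independent variables $(z,\bar z)$, restricting to any ``line'' in the exponent lattice preserves convergence: concretely, the coefficients $\varphi_{p_kq_k}$ with $(p_k,q_k)=(\tfrac{k(M+N)}2,\tfrac{k(M-N)}2)$ (for those $k$ where this is a pair of non-negative integers) satisfy $|\varphi_{p_kq_k}|\le C\,\sigma^{-k(M+N)/?}$-type bounds inherited from the Cauchy--Hadamard estimates for $\varphi^*$, so that $\varphi^0(Z)=\sum_k\varphi_{p_kq_k}Z^k$ is a convergent power series in $Z$. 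Equivalently, $\varphi^0(Z)$ is obtained from $\varphi^*(z,\bar z)$ by the substitution $z^p\bar z^q\mapsto Z^{(p-q)/N}$ valid on the slope-$\rho$ monomials, and such a substitution does not destroy convergence. Consequently $\Phi^0(Z)=Ze^{2\pi iN\varphi^0(Z)}$ is a convergent holomorphic germ at $0$.

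Then Lemma \ref{lin} tells us $\Phi^0\circ F^0 = L\circ\Phi^0$ with $L(Z)=\lambda^N Z$, i.e. $\Phi^0$ is a \emph{convergent} linearization of the holomorphic germ $F^0(Z)=\lambda^N Z e^{2\pi iNg^0(Z)}$. This contradicts the hypothesis that $F^0(Z)$ is not holomorphically linearizable. Hence $\Phi^*$, and therefore every formal conjugacy $\Psi$ of $F$ to a normal form (again by Lemma \ref{normalization}), must diverge. I would close by noting this applies to the family $B_{\lambda,a,d}$: there the angular part $g(z)=|z|^2\,\Im z + O(|z|^6)$ has leading slope coming from the monomial $z^2\bar z$, giving $\rho=\tfrac13$, $N=1$, $M=3$, so $F^0(Z)$ is (up to the reparametrization $Z=r^3e^{2\pi it}$) the relevant one-variable Arnold-type germ, to which Yoccoz/Geyer-type non-linearizability results apply for non-Brjuno $\omega$.

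The main obstacle I anticipate is the bookkeeping in the convergence transfer: one must check carefully that passing from the multi-indexed convergent series $\varphi^*(z,\bar z)$ to the single-variable series $\varphi^0(Z)$ really preserves a positive radius of convergence, given that only a sparse sublattice of monomials (those $k$ for which $(p_k,q_k)\in\N^2$) contributes, and that the weight $r^M$ appears in $Z$. This is where the Cauchy--Hadamard formula for several variables, already invoked in the proof of Lemma \ref{normalization}, must be used with a little care, but it is routine rather than deep. Everything else is a direct appeal to Lemma \ref{lin}, Corollary \ref{any}, Lemma \ref{normalization}, and the structure of the proof of Theorem \ref{hol1}.
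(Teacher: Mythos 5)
Your proposal is correct and follows essentially the paper's own route: extract the maximal-slope part, invoke Lemma \ref{lin}, and transfer convergence of the formal conjugacy to the one-variable germ $\Phi^0(Z)$ (a routine Cauchy-estimate argument in the two independent variables $z,\bar z$, as in Theorem \ref{hol1}), contradicting the non-linearizability of $F^0$. The only cosmetic difference is in handling a general conjugacy $\Psi=H\circ\Phi$: the paper observes that the resonant monomials $|z|^{2s}$ contributed by $H$ never attain the maximal slope $\frac{p-q}{p+q}=\rho$, so Lemma \ref{lin} holds verbatim for $\Psi$, whereas you first strip off $H$ using Lemma \ref{normalization}; both reductions are valid.
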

\begin{proof}
Lemma \ref{lin} is still valid if $\Phi$ is replaced by any formal conjugacy $\Psi$ of $F$ to a normal form.
Indeed, replacing $\Phi(z)$ by $$\Psi(z)=H\circ\Phi(z)=z\left(1+a(|z|^2)\right)e^{2\pi i\left(\varphi(z)+b(|z|^2)\right)}$$
does not change the proof because monomials of the form $|z|^{2s}$ never participate in the ones $z^p\bar z^q$ achieving the maximum of
$\frac{p-q}{p+q}\,\cdot$ 
\end{proof}
\begin{corollary}\label{divB} If $\omega$ is not a Brjuno number, any formal conjugacy of $B_{e^{2\pi i\omega},a,d}$ to a normal form diverges.
\end{corollary}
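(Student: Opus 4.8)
The plan is to follow verbatim the strategy of Corollary~\ref{A-div}, but using the sharper Theorem~\ref{hold} in place of Theorem~\ref{hol1}. First I would put $F=B_{e^{2\pi i\omega},a,d}$ into the standard form $F(z)=\lambda z(1+f(|z|^2))e^{2\pi ig(z)}$, with $\lambda=e^{2\pi i\omega}$, $f(|z|^2)=a|z|^{2d}$ and, reading off the angular component $\theta\mapsto\theta+\omega+r^2+r^3\sin2\pi\theta$,
\[
g(z)=|z|^2+|z|^2\,\Im z=z\bar z+\frac{z^2\bar z-z\bar z^2}{2i}.
\]
The only nonzero monomials of $g$ are $z\bar z$ (resonant), $z^2\bar z$ and $z\bar z^2$, so $\rho=\sup_{g_{pq}\neq0}\frac{p-q}{p+q}=\frac13$, attained by $z^2\bar z$. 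Hence, in the notation preceding Lemma~\ref{lin}, $N=1$, $M=3$, $Z=r^3e^{2\pi it}$, and the relevant pairs $(p_k,q_k)=(2k,k)$ are genuine pairs of integers for every $k\ge1$ (unlike the cautionary $M=2$, $N=1$ example).

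Next I would compute $g^0$. Among the monomials of $g$ only $z^2\bar z=Z$ has the extremal form $z^{2k}\bar z^k$, so $g^0(Z)=g_{21}Z=\frac1{2i}Z$ and
\[
F^0(Z)=\lambda^N Z\,e^{2\pi iNg^0(Z)}=\lambda Z\,e^{\pi Z},
\]
which is exactly the holomorphic germ that already appeared for the family $A$. I should note that the term $a|z|^{2d}$ of $f$ is irrelevant here: monomials of the form $|z|^{2s}$ have $p=q$ and therefore never contribute to the extremal monomials $z^{2k}\bar z^k$, in agreement with the remark in the proof of Theorem~\ref{hold}.

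Finally I would invoke the machinery already in place. By Lemma~\ref{lin} (valid for any formal conjugacy $\Psi$ of $F$ to a normal form, per the proof of Theorem~\ref{hold}), the homological equation forces the associated $\Phi^0$ to linearize $F^0(Z)=\lambda Ze^{\pi Z}$. By the Yoccoz--Geyer theorem (see \cite{Y,PM,G}), $z\mapsto\lambda ze^{\pi z}$ is holomorphically linearizable if and only if $\omega$ is a Brjuno number; so when $\omega$ is not a Brjuno number, $F^0$ is not holomorphically linearizable, and Theorem~\ref{hold} immediately yields that every formal conjugacy of $B_{e^{2\pi i\omega},a,d}$ to a normal form diverges.

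There is essentially no hard analytic step here; the only points requiring care are bookkeeping ones, namely verifying that $\rho=\frac13$ really is the supremum (a two-line inspection of the three monomials of $g$), that the induced pairs $(p_k,q_k)=(2k,k)$ are integral so that $g^0$ and $\Phi^0$ are well defined, and that the contraction term $a|z|^{2d}$ does not disturb the extremal monomials. Once these are checked, the conclusion is a direct consequence of the already-established Theorem~\ref{hold} together with Geyer's linearization result.
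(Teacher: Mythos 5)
Your proof is correct and follows essentially the same route as the paper, whose own proof of Corollary~\ref{divB} is just the one-line computation $\rho=\tfrac13$, $Z=z^2\bar z$, $B^0(Z)=Ze^{\pi Z}$, combined with Theorem~\ref{hold} and the Yoccoz--Geyer linearization result already invoked for Corollary~\ref{A-div}. Your version merely makes the bookkeeping explicit (the integrality of the pairs $(p_k,q_k)=(2k,k)$, the irrelevance of the $a|z|^{2d}$ term), and you correctly retain the multiplier, writing $F^0(Z)=\lambda Z e^{\pi Z}$ where the paper's text drops the factor $\lambda$.
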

\begin{proof}
We have  $\rho=1/3, Z=z^2\bar z$ and $B_{e^{2\pi i\omega},a,d}^0(Z)=Ze^{\pi Z}$.
\end{proof}
\medskip

\noindent {\bf A question.} Here is a simple example for which Theorem \ref{hold} does not lead to a conclusion and hence leaves unsettled the question of divergence:
$$C_{\lambda,a,d}(z)= \lambda z(1+a|z|^{2d})e^{2\pi i|z|^2(1+\Im e^z)}.$$
Indeed,
$$\sup_{g_{pq}\not=0}\frac{p-q}{p+q}=\sup_n\frac{n}{n+2}=1.$$

\noindent Hence only Theorem 10 applies, but $F^0(z)=\lambda z$.
 \subsection {The case of strong contraction $|\lambda|<1$}\label{strong}

\noindent If $\rho=|\lambda|\not=$1, Poincar\'e's theorem insures the existence of an analytic local conjugacy of $F$ to its derivative $dF(0)$ but also to any convergent normal form $N(z)=\lambda z(1+\sum_{k\ge 1}\alpha_k|z|^{2k})$ ($\alpha_k\in\C$). The difference with the case $|\lambda|=1$ is the possibility of fixing arbitrarily the series $n(|z|^2)$ by choosing the coefficients $\varphi_{pp}$ of a conjugacy $z\mapsto \Phi(z)=ze^{2\pi i\varphi(z)}$. But there is a unique 
formal diffeo\-morphism tangent to Identity which conjugates $F$ to its derivative $dF(0)$: indeed, if $\Psi$ is another one, the composition $h=\Psi\circ \Phi^{-1}$ is tangent to Identity and satisfies the equation $h(\lambda z)=\lambda h(z)$; a term by term identification of the series expansion of $h$ then shows that, already at the formal level, $h$ is the Identity. Hence, if $|\lambda|<1$ the analytic linearization $\Phi$ of $A_{\lambda}$ is of the form $\Phi(z)=ze^{2\pi i\varphi(z)}$ where $\varphi$ is the convergent solution of the equations 
\begin{equation*}
\left\{
\begin{split}
&\varphi_{10}=\frac{1}{2i(1-\lambda)},\quad\quad \varphi_{01}=\frac{1}{2i(\bar\lambda-1)}=\bar\varphi_{10},\\
&\sum_{j+k\ge 2}\varphi_{jk}(\lambda^j\bar\lambda^k-1)z^j\bar z^k+\sum_{j+k\ge 1,n\ge 1}\varphi_{jk}\lambda^j\bar\lambda^k\frac{\pi^n}{n!}
(j-k)^n(z-\bar z)^nz^j\bar z^k=0.
\end{split}
\right.
\end{equation*}
Notice that in this case the formula 
$\varphi(z)=\sum_{m=0}^\infty\tilde g\circ F^{(m)}(z)$
which one deduces immediately by iterating the homological equation
makes sense in the realm of power series while it makes sense only for each fixed $z$ in the case of weak contraction (Lemma \ref{ptwisecv}).

\subsection{Always convergence or generic divergence}
In \cite{PM2}, Ricardo P\'erez-Marco showed that, for the Birkhoff normal form of an analytic Hamiltonian flow at a non-resonant
singular point with given quadratic part, as well as for the
normalizing transformation, the following alternative holds: either it
is always convergent or it is generically divergent. We now show how
to adapt the proof to the non conservative case in our context.
\smallskip

\noindent Let $\lambda=e^{2\pi i\omega}$ with real $\omega\not\in\Q$. Consider the following families of local real analytic diffeomorphisms of~$\R^2$
(the lower indices indicate the elements which are fixed in the family):

\begin{equation*}
\begin{split}
{\cal F}_{\lambda}&=\left\{F\left|  F(z)=\lambda z(1+f(|z|^2)e^{2\pi i
      g(z)},\; \hbox{$f,g$ arbitrary}\right.\right\},\\
{\cal F}_{\lambda,\bull,g}&=\left\{F\left|  F(z)=\lambda z(1+f(|z|^2)e^{2\pi
      i g(z)},\; \hbox{$f$ arbitrary}\right.\right\},\\
{\cal F}_{\lambda,f,\bull}&=\left\{F\left|  F(z)=\lambda z(1+f(|z|^2)e^{2\pi
     i g(z)},\; \hbox{$g$ arbitrary}\right.\right\}.
\end{split}
\end{equation*}
with real analytic functions
\begin{equation} \label{eqrealfg}
f(u)=\sum_{j\geq 1}f_j u^j\enspace (f_j\in \R)
\enspace\text{and}\enspace g(z)=\sum_{j+k\geq 1}g_{jk}z^j\bar{z}_k
\enspace (g_{jk}=\bar{g}_{kj}).
\end{equation}
At the end of this section, we will prove

\begin{theorem}\label{cases-total-divergence}
  Let $\omega\in\R\setminus\Q$.
  \smallskip
  
  \noindent i) Let~$g$ be as in~\eqref{eqrealfg}. The generic element
  of ${\cal F}={\cal F}_{e^{2\pi i\omega}}$ or
  ${\cal F}={\cal F}_{e^{2\pi i\omega},\bull,g}$ has no convergent
  normalization.   \smallskip
  
  \noindent ii) Let $f\not\equiv 0$ be as in~\eqref{eqrealfg}. If
  $\omega$ is not a Brjuno number, 
 then the generic element of
  ${\cal F}={\cal F}_{e^{2\pi i\omega},f,\bull}$ has no convergent
  normalization.
\end{theorem}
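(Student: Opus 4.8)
\textbf{Proof plan for Theorem \ref{cases-total-divergence}.}

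The plan is to adapt P\'erez-Marco's Baire-category argument from \cite{PM2}. The underlying philosophy is that the set of diffeomorphisms in a given family admitting a convergent normalization is a countable union of closed sets with empty interior, \emph{unless} every element of the family has a convergent normalization. The key structural inputs are already in place: by Lemma \ref{normalization}, ``$F$ admits a convergent normalization'' is equivalent to ``the basic normalization $\Phi^*$ converges'', so it suffices to work with the canonical object $\Phi^*$ (equivalently $N^*$); and by Lemma \ref{n-pol} together with the homological equation \eqref{Homoleq}, the Taylor coefficients of $\varphi^*$ and of $n^*$ are universal polynomials in the coefficients $f_j, g_{jk}$ of $F$. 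I equip each family ${\cal F}$ with the natural topology making the coefficient maps $F\mapsto f_j$, $F\mapsto g_{jk}$ continuous, and more importantly I fix a sequence of compact sets exhausting a space of analytic germs with a common domain of analyticity (a Banach space of germs bounded on a fixed polydisc, say), so that Baire category applies.

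First I would define, for each pair of radii $0<r'<r$ and each integer $n$, the set $E_{r,r',n}$ of those $F$ in the family whose basic normal form $N^*$ converges on $\{|z|<r\}$ with $|\varphi^*|\le n$ there and whose restriction to this disc is bounded by $r'$ etc.; the precise bookkeeping is as in \cite{PM2}. Each $E_{r,r',n}$ is closed because convergence of $\Phi^*$ on a fixed disc with a fixed bound is a closed condition (a uniform limit of maps semi-conjugating $F_k$ to $N^*_k$ semi-conjugates the limits), and the coefficients depend continuously on $F$. The set of $F$ with a convergent normalization is then $\bigcup_{r,r',n} E_{r,r',n}$, a countable union over rational $r,r'$ and integers $n$. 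If this union is non-meager, some $E_{r,r',n}$ has nonempty interior, and then a standard propagation/analytic-continuation argument in the family forces \emph{every} element of the family to have a convergent normalization. So the dichotomy reduces to: either all elements normalize convergently, or the generic one does not.

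To finish, I must rule out the first horn of the dichotomy for each of the three families by exhibiting a single element with divergent normalization. For part i) and the family ${\cal F}_{e^{2\pi i\omega},\bull,g}$ (fixed $g$, $f$ free), and likewise ${\cal F}_{e^{2\pi i\omega}}$, I would take as $f$ anything nonzero (or zero) and choose $g$ so that the holomorphic germ $F^0$ or the germ $F^0(Z)$ of Theorem \ref{hold} is not holomorphically linearizable --- e.g.\ mimic the family $A$: with $g$ containing the term $\tfrac{1}{2i}(z-\bar z)$ one has $F^0(z)=\lambda z e^{\pi z}$, which by Geyer's theorem (\cite{G}, as in Corollary \ref{A-div}) is non-linearizable for suitable non-Brjuno $\omega$; but since in part i) no arithmetic hypothesis on $\omega$ is assumed, I instead invoke that for \emph{any} irrational $\omega$ there exist analytic germs with linear part $\lambda z$ that are not linearizable (e.g.\ by Yoccoz's or P\'erez-Marco's constructions of non-linearizable germs with prescribed irrational rotation number), and build $g$ accordingly inside the fixed family; then Theorem \ref{hol1} (or \ref{hold}) gives divergence of every normalization of that particular $F$, killing the ``always convergent'' horn. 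For part ii), $f\not\equiv0$ is fixed and $g$ is free with $\omega$ non-Brjuno: here I pick $g$ realizing the $A$-type holomorphic part $F^0(z)=\lambda z e^{\pi z}$ (allowed since $g$ is free), so Corollary \ref{A-div} / Theorem \ref{hol1} applies verbatim and that $F$ has only divergent normalizations.

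The main obstacle is the second step: verifying that the ``convergent normalization'' sets $E_{r,r',n}$ are genuinely closed in the chosen topology and that nonempty interior propagates to the whole family. Closedness requires a compactness/normal-families argument to pass to the limit in the semi-conjugacy equation $\Phi^*_k\circ F_k = N^*_k\circ\Phi^*_k$ while controlling that the limit $\Phi^*$ is still the \emph{basic} normalization (no spurious resonant terms appear in the limit), which is where the rigidity from Lemma \ref{pres} and the polynomial dependence of coefficients is used; and the propagation step needs that within each family the coefficient maps are, in a suitable sense, ``polynomially submersive'' so that an open set of $F$'s forces an identity among the universal coefficient-polynomials, hence convergence on the whole family --- this is the technical heart and is exactly the part carried out in detail in \cite{PM2}, which I would follow closely, indicating the (minor) changes needed to accommodate the extra real parameter $f$ and the skew-product structure.
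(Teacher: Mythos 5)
Your overall architecture (reduce to the basic normalization via Lemma~\ref{normalization}, write the set of maps with convergent basic normalization as a countable union of closed sets with uniform bounds, and prove a convergence/generic-divergence dichotomy in the style of P\'erez-Marco) is the same as the paper's, which establishes the dichotomy as Theorem~\ref{generic-div} by restricting to complex affine lines $t\mapsto F_t$ in the family, using the degree bound $\deg_t\varphi^*_{pq}(t)\le p+q$ (Lemma~\ref{deg}), the Bernstein--Walsh inequality on non-polar sets, and the fact that polar sets have zero Lebesgue measure. Your ``propagation'' step is exactly this, and deferring it to \cite{PM2} is reasonable; note only that no normal-families or compactness argument is needed for closedness (each $\varphi^*_{pq}$ is a polynomial, hence continuous, function of finitely many coefficients of $F$, so there is no issue of ``spurious resonant terms in the limit''), and that the mechanism is a growth estimate along complex lines, not ``an identity among universal coefficient-polynomials''. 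Your treatment of part~(ii) (choose $g$ realizing $F^0(z)=\lambda ze^{\pi z}$, then Theorem~\ref{hol1} plus Geyer's theorem as in Corollary~\ref{A-div}) coincides with the paper's.

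The genuine gap is in how you rule out the ``always convergent'' horn in case~(i). First, in ${\cal F}_{e^{2\pi i\omega},\bull,g}$ the function $g$ is \emph{fixed}, so you are not free to ``choose $g$'' so that $F^0$ is non-linearizable: the divergent witness must lie in the given family. Second, and more fundamentally, your fallback claim that ``for any irrational $\omega$ there exist analytic germs with linear part $\lambda z$ that are not linearizable'' is false: by Brjuno's theorem, if $\omega$ is a Brjuno number then \emph{every} holomorphic germ with linear part $e^{2\pi i\omega}z$ is linearizable, and the Yoccoz/P\'erez-Marco non-linearizable constructions require non-Brjuno rotation numbers --- this is precisely why the holomorphic-part criteria (Theorems~\ref{hol1} and~\ref{hold}) only give Corollaries~\ref{A-div} and~\ref{divB} under a non-Brjuno hypothesis. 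Since case~(i) assumes only $\omega\in\R\setminus\Q$, that route cannot produce the needed divergent element when $\omega$ is Brjuno. The paper uses instead divergent \emph{conservative} examples: taking $f\equiv 0$ (available in both families of case~(i); for the fixed-$g$ family the witness is $\lambda ze^{2\pi ig(z)}$ itself), every formal normalization preserves the foliation by circles (Corollary~\ref{any}), so a convergent normalization would analytically conjugate the restrictions of $F$ to arbitrarily small circles to rotations, which is impossible when arbitrarily small circles fall inside Arnold tongues with non-rotation dynamics (section~\ref{ConsCase}); this obstruction is valid for every irrational $\omega$. Without this (or an equivalent) arithmetic-free divergence mechanism, your proof of part~(i) does not go through.
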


\noindent Recall that a normalization $\Phi^*$ of $F$ is called the  basic normalization if 
\[\Phi^*(z)=ze^{2\pi i \varphi^*(z)}\quad\text{with}\enspace \varphi^*(z)=\sum_{p+q\geq
    1}\varphi_{pq}^*z^p\bar{z}^q \enspace
 \text{where} \enspace \varphi_{pp}^*=0\enspace\text{for}\enspace p\geq
  1,\]
and the corresponding normal form is called the basic normal form:
\[N^*(z)=\lambda z(1+f(|z|^2))e^{2\pi i n^*(|z|^2)},\quad n^*(z)=\sum_{s\geq 1} n_s^*|z|^{2s}.\]

\noindent The main part of this section is devoted to the proof of

\begin{theorem}\label{generic-div} Let $\lambda=e^{2\pi i\omega}$ with
  $\omega \in \R\setminus\Q$ and let
  ${\cal F}$ be one of the three families of local real analytic diffeomorphisms defined above.
Either the basic normalization of every $F\in{\cal F}$ is convergent
(resp. its basic normal form is convergent), or the normalizations of
a generic $F\in{\cal F}$ are divergent (resp. its basic normal form is
divergent).
\end{theorem}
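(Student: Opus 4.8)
The plan is to adapt P\'erez-Marco's dichotomy argument (from \cite{PM2}) to our setting, using the fact---established in Section~\ref{Nonu}---that the convergence question can be reduced to the \emph{basic} normalization $\Phi^*$ and the \emph{basic} normal form $N^*$, both of which depend on $F$ in a linear-algebraic, ``universal'' way through the homological equation~\eqref{Homoleq}. First I would set up the relevant functional-analytic framework: for each $\sigma>0$ consider the Banach space $B_\sigma$ of germs $F$ in the family ${\cal F}$ that extend holomorphically (in $z,\bar z$ viewed as independent variables) and are bounded on the polydisc of radius $\sigma$, equipped with the sup norm; the family ${\cal F}$ is then exhausted by a countable union of such spaces, so it is a Baire space (an inductive limit of Banach spaces, or more simply a Fr\'echet space on which one applies the Baire category theorem component by component). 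For $F\in {\cal F}$ write $\Phi^*_F(z)=z e^{2\pi i\varphi^*_F(z)}$ with $\varphi^*_F(z)=\sum \varphi^*_{pq}(F)\, z^p\bar z^q$; by Lemma~\ref{pres} each coefficient $\varphi^*_{pq}(F)$ is a \emph{polynomial} (hence entire, hence continuous) function of finitely many Taylor coefficients of $F$, and likewise each $n^*_s(F)$.

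Next I would introduce, for each pair of radii $0<\tau<\sigma$, the ``size'' functional $\Theta_{\sigma,\tau}(F)=\limsup_{N\to\infty}\bigl(\sup_{|p|+|q|=N}|\varphi^*_{pq}(F)|\,\tau^{N}\bigr)^{1/N}$ (and the analogous one for $N^*$); convergence of $\Phi^*_F$ on some disc is equivalent to $\Theta_{\sigma,\tau}(F)<\infty$ for suitable $\tau$, equivalently to the finiteness of $\Theta_N(F):=\sup_{|p|+|q|=N}|\varphi^*_{pq}(F)|^{1/N}$ being bounded. The key structural point, which must be checked from the explicit recursion in the proof of Lemma~\ref{pres}, is that each partial sum $\varphi^{*,\le N}_F:=\sum_{|p|+|q|\le N}\varphi^*_{pq}(F)z^p\bar z^q$ depends on $F$ in a way that is \emph{continuous from $B_\sigma$ to the space of polynomials}, uniformly on bounded subsets; this is immediate since only finitely many Taylor coefficients of $F$ enter, and the small-divisor denominators $\lambda^p\bar\lambda^q-1$ are nonzero and fixed (this is where $\omega\notin\Q$ is used, via~\eqref{eqNRlambda}). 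Consequently the set
\[
 \mathcal{D}_k \;=\; \Bigl\{\, F\in {\cal F} \;\Big|\; \exists\, N\ge 1,\ \sup_{|p|+|q|=N}|\varphi^*_{pq}(F)| > k^{N} \,\Bigr\}
\]
is open in ${\cal F}$ (a countable union, over $N$, of preimages of open sets under continuous maps), for every $k\in\N$.

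Then the dichotomy follows from Baire: either $\bigcap_k \mathcal{D}_k$ is dense---in which case, being a countable intersection of open sets, it is a dense $G_\delta$, and every $F$ in it has $\limsup_N \Theta_N(F)=\infty$, i.e.\ divergent basic normalization, hence (by Lemma~\ref{normalization}) all normalizations of~$F$ divergent---or else some $\mathcal{D}_{k_0}$ fails to be dense, so there is an open ball $U\subset {\cal F}$ on which $\sup_{|p|+|q|=N}|\varphi^*_{pq}(F)|\le k_0^{N}$ for all $N$ and all $F\in U$; but then, because the coefficient functions $\varphi^*_{pq}$ are \emph{entire} on the finite-dimensional coefficient spaces and the bound $k_0^N$ on a ball propagates by a Cauchy-estimate / maximum-principle argument (here one uses that $\varphi^*_{pq}$ is polynomial in the coefficients of $F$, so a uniform bound on a ball yields a bound with at most a fixed geometric loss everywhere), every $F\in{\cal F}$ satisfies $\sup_{|p|+|q|=N}|\varphi^*_{pq}(F)|\le C_F\, K^{N}$ and thus has convergent basic normalization. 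The same argument run with $n^*_s$ in place of $\varphi^*_{pq}$ gives the parenthetical statement about the basic normal form. I expect the main obstacle to be this last ``propagation of bounds'' step: making precise why a uniform estimate of the homogeneous parts on one small ball forces a (possibly worse, but still geometric) estimate on all of ${\cal F}$. This is exactly the place where P\'erez-Marco's argument in \cite{PM2} is delicate---one must exploit homogeneity of $\varphi^*_{pq}(F)$ as a function of $F$ under the scaling $F\mapsto$ (conjugate $F$ by a homothety $z\mapsto tz$), which rescales the $N$-th homogeneous part by $t^{\pm N}$, so a bound ``near one point at one scale'' becomes a bound ``everywhere at all scales'' after absorbing a fixed constant; carrying this out cleanly in the present non-conservative, real-analytic (two-variable $z,\bar z$) framework, and checking it is compatible with the three choices of family ${\cal F}$ (which only changes \emph{which} coefficients are free), is the real work.
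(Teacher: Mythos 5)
Your Baire-category skeleton (the open sets $\mathcal{D}_k$, openness via polynomial dependence of each $\varphi^*_{pq}$ on finitely many Taylor coefficients of $F$, the dense-$G_\delta$ divergence branch, and the reduction to the basic normalization via Lemma~\ref{normalization}) matches the structure of the paper's proof. But the step you yourself flag as ``the real work''---propagating the uniform bound $\sup_{p+q=N}|\varphi^*_{pq}(F)|\le k_0^{N}$ from one ball $U$ to all of $\mathcal{F}$---is a genuine gap, and the mechanism you propose does not fill it. Conjugation by a homothety $z\mapsto tz$ rescales the coefficients $g_{jk}\mapsto t^{j+k}g_{jk}$, $f_j\mapsto t^{2j}f_j$, $\varphi^*_{pq}\mapsto t^{p+q}\varphi^*_{pq}$, so it transports bounds only along the scaling orbit, i.e.\ it would suffice if the ball $U$ were centered at the linear map $\lambda z$; the ball produced by non-density of $\mathcal{D}_{k_0}$ is centered at an arbitrary $F_1$, and no symmetry of the problem moves a general $F_0$ into it. Likewise, ``entire and bounded on a ball'' does not by itself give a geometric-in-$N$ bound at a distant $F_0$: the degree of $\varphi^*_{pq}$ as a polynomial in the coefficients of $F$ grows with $p+q$, and the loss in any Cauchy/maximum-principle estimate is exponential in that degree, so one needs a degree bound \emph{linear} in $N$ along the relevant directions---a quantitative fact you never establish.

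That is exactly what the paper supplies, following P\'erez-Marco \cite{PM2}. One restricts to the affine segment $F_t$ joining the interior point $F_1$ (at $t=0$) to a hypothetical divergent map $F_0$ (at $t=1$), extended to complex $t$ with the reflection conditions $f_j(t)=\overline{f_j(\bar t)}$, $g_{jk}(t)=\overline{g_{kj}(\bar t)}$; Lemma~\ref{deg} shows that $\varphi^*_{pq}(t)$ is a polynomial in $t$ of degree at most $p+q$ (and $n^*_s(t)$ of degree at most $2s$). Then Cauchy inequalities on the set $E_n$ of parameters where $\varphi^*_t$ is bounded on $|z|\le 1/n$, together with the Bernstein--Walsh lemma, show that if the set of convergent parameters is non-polar it is all of $\C$; equivalently (Lemma~\ref{generic1}) divergence at a single $t$ forces the set of convergent real parameters to have Lebesgue measure zero, contradicting the fact that it contains an interval around $t=0$ coming from the ball $U$. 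Your scaling remark cannot substitute for this line-by-line degree control plus the potential-theoretic (or at least a Chebyshev-type) estimate for polynomials of degree $N$ bounded on a segment, so as written the convergence branch of your dichotomy is unproved; the remainder of your outline (continuity of the coefficient functionals, openness of $\mathcal{D}_k$, treatment of the three families by freezing $f$ or $g$) is consistent with the paper.
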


\noindent The proof will require three lemmas.

\begin{lemma}\label{deg}
Let $F(t;z)=\lambda z(1+f(t,|z|^2))e^{2\pi i g(t,z)}$ be a family of local maps where
\[
 f(t,u)=\sum_{j\geq 1}f_j(t)u^j,\; f_j(t)=\overline{f_j(\bar t)}, \quad
g(t,z)=\sum_{j+k\geq
    1}g_{jk}(t)z^j\bar{z}_k, \; g_{jk}(t)=\overline{g_{kj}(\bar t)},
\]
and the coefficients $f_j(t)$ and $g_{jk}(t)$ are polynomial functions
of $t\in\C$.
Then the basic normalization $\Phi^*(t;z)$ has the property that each
$\varphi_{pq}^*(t)$ is polynomial in~$t$ with degree no larger than
$p+q$, and the basic normal form $n^*(t;z)$ has its coefficients
$n_s^*(t)$ polynomial in~$t$ with degree no larger than~$2s$.
\end{lemma}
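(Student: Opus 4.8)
The plan is to track the degrees in~$t$ through the inductive construction of $\varphi^*_{pq}(t)$ and $n^*_s(t)$ furnished by the proof of Lemma~\ref{pres}. Recall the homological equation~\eqref{Homoleq}, which in the notation of the present family reads
\[
 g(t,z)-n^*(t;|z|^2)+\varphi^*\!\circ F(t;z)-\varphi^*(t;z)=0.
\]
As in the proof of Lemma~\ref{pres}, expanding $\bigl(1+f(t,|z|^2)\bigr)e^{2\pi ig(t,z)}=1+\sum_{\alpha,\beta}c_{\alpha,\beta}(t)z^\alpha\bar z^\beta$ and collecting the coefficient of $z^p\bar z^q$ yields, for $p\ne q$, a recursion of the shape
\[
 (\lambda^p\bar\lambda^q-1)\,\varphi^*_{pq}(t)=-g_{pq}(t)+R_{pq}(t),
\]
where $R_{pq}(t)$ is a finite sum of products of coefficients $c_{\alpha,\beta}(t)$ (hence of $f_j(t)$ and $g_{jk}(t)$) with lower-order coefficients $\varphi^*_{p'q'}(t)$, $p'+q'<p+q$; for $p=q$ it instead determines $n^*_p(t)$ by the analogous formula with $\varphi^*_{pp}=0$. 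First I would make precise the combinatorics of this recursion: each monomial $c_{\alpha,\beta}(t)$ appearing with a nontrivial power carries total homogeneity $\alpha+\beta$ in $(z,\bar z)$, and — this is the crucial bookkeeping point — the coefficient $c_{\alpha,\beta}(t)$ is a polynomial in~$t$ of degree at most $\alpha+\beta$, because $f_j(t)$ has degree $\le j$ (matching homogeneity $2j$, so even $\le 2j$ suffices) and $g_{jk}(t)$ has degree $\le j+k$ by hypothesis, and products of such are controlled additively in both homogeneity and $t$-degree.

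Granting that, the argument is a single induction on $n=p+q$ with the induction hypothesis ``$\deg_t\varphi^*_{pq}\le p+q$ for all $p\ne q$ with $p+q<n$, and $\deg_t n^*_s\le 2s$ for all $2s<n$.'' For the inductive step one reads off the recursion above: $g_{pq}(t)$ contributes degree $\le p+q=n$; a typical term of $R_{pq}(t)$ is a product $\varphi^*_{jk}(t)\cdot\prod c_{\alpha_i,\beta_i}(t)$ with $(j-k)$ or similar integer factors (degree~$0$ in~$t$), and the $(z,\bar z)$-homogeneities add up: $j+k+\sum_i(\alpha_i+\beta_i)=p+q$. By the induction hypothesis $\deg_t\varphi^*_{jk}\le j+k$, and by the homogeneity-versus-degree bound each $\deg_t c_{\alpha_i,\beta_i}\le\alpha_i+\beta_i$, so the product has $t$-degree at most $j+k+\sum_i(\alpha_i+\beta_i)=p+q=n$. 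Summing finitely many such terms cannot raise the degree, so $\deg_t\varphi^*_{pq}\le n$ (dividing by the nonzero constant $\lambda^p\bar\lambda^q-1$ changes nothing), and when $p=q=n/2$ the same count gives $\deg_t n^*_{n/2}\le n=2\cdot(n/2)$, closing the induction. One should also note that reality/symmetry $f_j(t)=\overline{f_j(\bar t)}$, $g_{jk}(t)=\overline{g_{kj}(\bar t)}$ is preserved, as in Lemma~\ref{pres}, but this plays no role in the degree count.

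The main obstacle is purely organizational rather than deep: one must set up the correct bookkeeping gadget so that both the $(z,\bar z)$-homogeneity and the $t$-degree are tracked \emph{simultaneously and additively} through the exponential expansion $e^{2\pi ig}=\sum_s\frac1{s!}(2\pi ig)^s$ and the product with $(1+f)^{j+k}$ — i.e., verifying cleanly that in forming $c_{\alpha,\beta}(t)$ every multiplication adds homogeneities and $t$-degrees in lockstep, with $t$-degree never exceeding homogeneity at the atomic level ($f_j$, $g_{jk}$, and the scalar $2\pi i(j-k)$). Once that lemma-within-the-proof is stated, the induction is mechanical. A convenient way to package it is to assign to each monomial $a(t)z^\alpha\bar z^\beta$ the ``weight'' $\max(\deg_t a,\ \alpha+\beta)$ (or simply to carry the pair and check the relevant inequality is stable under the two operations used), and observe that the whole construction only ever multiplies such objects and solves the small-divisor equation, neither of which breaks the bound $\deg_t\le$ homogeneity.
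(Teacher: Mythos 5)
Your proof follows essentially the same route as the paper's: reduce to the homological equation, expand $(1+f)e^{2\pi i g}=1+\sum c_{\alpha\beta}(t)z^\alpha\bar z^\beta$ with $\deg_t c_{\alpha\beta}\le\alpha+\beta$, and induct on $p+q$ using the recursion $(\lambda^p\bar\lambda^q-1)\varphi^*_{pq}(t)=-g_{pq}(t)+R_{pq}(t)$, where the additivity of $t$-degree and $(z,\bar z)$-homogeneity in the lower-order products gives $\deg_t\varphi^*_{pq}\le p+q$ and $\deg_t n^*_s\le 2s$. The only difference is cosmetic: you spell out the degree bounds on $f_j(t)$ and $g_{jk}(t)$ needed for $\deg_t c_{\alpha\beta}\le\alpha+\beta$, which the paper's statement leaves implicit (its intended application being to coefficients affine in $t$).
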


\begin{proof}   
Recall that the
conjugacy equation $\Phi^*\circ F=N^*\circ \Phi^*$ is reduced to the homological equation
$$g(t;z)-n^*(t,|z|^2)+\varphi^*(t)\circ F(t;z)-\varphi^*(t;z)=0.$$
Writing
$$(1+f(t;|z|^2))e^{2\pi i g(t;z)}=1+\sum_{\alpha+\beta\geq 1} c_{\alpha\beta}(t)z^{\alpha}\bar{z}^{\beta},$$
since $f_j(t)$ and $g_{pq}(t)$ are polynomial in~$t$, the coefficient $c_{\alpha\beta}(t)$ is a polynomial in $t$ of degree no larger than $\ell=\alpha+\beta$.
\smallskip

\noindent The conjugacy equation becomes
\begin{eqnarray*}&&-\sum_{j+k\geq 1}g_{jk}(t)z^j\bar{z}^k+\sum_{s\geq 1}n^*_{s}(t)|z|^{2s}\\
&&=\sum_{p+q\geq 1}\varphi_{pq}^*(t)z^p\bar{z}^q[\lambda^p\bar{\lambda}^q(1+\sum_{\alpha+\beta\geq 1} c_{\alpha\beta}(t)z^{\alpha}\bar{z}^{\beta})^p(1+\sum_{\alpha+\beta\geq 1}\overline{ c_{\alpha\beta}}(t)z^{\beta}\bar{z}^{\alpha})^q-1]\\
&&=\sum_{p+q\geq 1}\varphi_{pq}^*(t)z^p\bar{z}^q(\lambda^p\bar{\lambda}^q-1)+A_{pq}(t)z^p\bar{z}^q,
\end{eqnarray*}
where $A_{pq}(t)$ given by summation and multiplication of
$\varphi_{jk}^*(t)$ and $c_{jk}^*(t)$ with $j+k<p+q$, whence
$A_{pq}(t)$ is a polynomial function of $t$ .\smallskip

\noindent The coefficients $\varphi_{pq}^*(t)$ and $n^*_{s}(t)$ are
uniquely determined by induction on the degree $\ell:=p+q$, once the
$\varphi_{pp}^*$ are chosen to be zero. By induction, we get
$A_{pq}(t)$ of degree smaller than $p+q$, $\varphi^*_{pq}(t)$
polynomial function of~$t$ with degree no larger than $p+q$,
and $n_s^*(t)$ polynomial with degree no larger than $2s$.
\end{proof}

\medskip

\noindent In the following, we will be using the notion of a polar set,
the Green function~$V_E$ of a subset~$E$ of~$\C$, and the
Bernstein-Walsh lemma; the reader is referred to P\'erez-Marco's
paper \cite{PM2} for all this.
\smallskip

\noindent Let $(f_0,g_0)$ and  $(f_1,g_1)$ be as
in~\eqref{eqrealfg}. We will consider the affine subspace~$V$
consisting of the maps
\[
F_t(z) = \lambda z(1+(tf_0+(1-t)f_1)(|z|^2))e^{2\pi i
  (tg_0(z)+(1-t)g_1(z))},
\qquad t\in \C.
\]

\begin{lemma} Let~$E$ denote the set of parameters $t \in \C$ such that the basic
  normalization $\Phi_t^*$ (resp. the basic normal form $N^*_t$) is
  convergent.
If~$E$ is not polar, then $E=\C$.
\end{lemma}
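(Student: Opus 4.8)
The plan is to quantify the convergence of $\Phi_t^*$ in a way that makes it amenable to the Bernstein--Walsh lemma, following P\'erez-Marco's strategy. The basic normalization is $\Phi_t^*(z)=ze^{2\pi i\varphi_t^*(z)}$ with $\varphi_t^*(z)=\sum_{p+q\geq 1}\varphi_{pq}^*(t)z^p\bar z^q$, and by Lemma~\ref{deg} each coefficient $\varphi_{pq}^*(t)$ is a polynomial in~$t$ of degree at most $p+q$. For $t\in E$ the series $\varphi_t^*$ converges, so for each such~$t$ there exist constants $C(t),\sigma(t)>0$ with $|\varphi_{pq}^*(t)|\leq C(t)\sigma(t)^{p+q}$ for all $p,q$. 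First I would stratify $E$ by these constants: writing $E_{k}=\{t\in E : |\varphi_{pq}^*(t)|\leq k\,k^{p+q}\text{ for all }p+q\geq 1\}$, we have $E=\bigcup_{k\geq 1}E_k$, and each $E_k$ is closed (a countable intersection of closed conditions on the polynomials $\varphi_{pq}^*$). Since $E$ is assumed non-polar and a countable union of polar sets is polar, some $E_{k_0}$ is non-polar.

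Next I would apply the Bernstein--Walsh lemma on the non-polar compact-ish set $E_{k_0}$. Let $V_{E_{k_0}}$ be the Green function of $E_{k_0}$ (finite and subharmonic on~$\C$ because $E_{k_0}$ is non-polar). Bernstein--Walsh gives, for each polynomial $\varphi_{pq}^*$ of degree at most $p+q$,
\[
|\varphi_{pq}^*(t)| \leq \Bigl(\sup_{E_{k_0}}|\varphi_{pq}^*|\Bigr)\, e^{(p+q)\,V_{E_{k_0}}(t)} \leq k_0\, k_0^{p+q}\, e^{(p+q)\,V_{E_{k_0}}(t)}
\]
for every $t\in\C$. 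Hence for any fixed $t$, setting $\sigma = k_0\,e^{V_{E_{k_0}}(t)}<\infty$, we get $|\varphi_{pq}^*(t)|\leq k_0\,\sigma^{p+q}$ for all $p,q$, which by the Cauchy--Hadamard criterion (in two variables $z,\bar z$) forces the power series $\varphi_t^*(z)$ to have positive radius of convergence. Therefore $\Phi_t^*$ converges for every $t\in\C$, i.e.\ $E=\C$. The argument for the basic normal form $N_t^*$ is identical, using that $n_s^*(t)$ is polynomial of degree at most $2s$ by Lemma~\ref{deg}, so that convergence of $\sum n_s^*(t)|z|^{2s}$ is again controlled by a uniform geometric bound on the coefficients obtained from Bernstein--Walsh on a non-polar stratum.

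The main obstacle is the bookkeeping needed to legitimately reduce to a non-polar \emph{closed} stratum on which all the coefficient polynomials are \emph{simultaneously} bounded by a single geometric sequence — one must phrase the convergence of $\varphi_t^*$ purely in terms of such a uniform bound (rather than an unquantified ``radius of convergence is positive''), so that each $E_k$ is genuinely closed and the countable-union-of-polar-sets principle applies. Once that reduction is in place, Bernstein--Walsh does all the work: the key point is that polynomial degree growth ($p+q$, resp.\ $2s$) is exactly matched to the geometric growth allowed by a convergent power series, so the Green-function factor $e^{(p+q)V_{E_{k_0}}(t)}$ only rescales the geometric ratio and never destroys convergence. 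I would also remark that this is where the hypothesis $\omega\in\R\setminus\Q$ and the existence/uniqueness of the basic normalization (Lemma~\ref{pres}, Definition~\ref{basic}) enter: they guarantee that $\Phi_t^*$ is well-defined for \emph{every} $t\in\C$ as a formal object, so that the set $E$ and the polynomials $\varphi_{pq}^*(t)$ make sense globally.
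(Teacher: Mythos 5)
Your proof is correct and follows essentially the same route as the paper: stratify $E$ into countably many sets on which the coefficients satisfy a uniform geometric bound, pick a non-polar stratum, and combine the degree bounds of Lemma~\ref{deg} with the Bernstein--Walsh lemma to propagate the geometric coefficient bound to every $t\in\C$. The only cosmetic difference is that you stratify directly by coefficient bounds, whereas the paper stratifies by boundedness of $\varphi_t^*$ on a disc $|z|\le 1/n$ and then extracts the coefficient bounds via the Cauchy inequalities.
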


\begin{proof} Let~$E$ denote the set of parameters $t \in \C$ such
  that~$\Phi_t^*$ is convergent and suppose that~$E$ is not polar.
We have
$$E=\cup_{n\geq 1}E_n,$$
where $E_n$ is the set of $t\in E$ such that the power series
$\varphi^*_t(z)$ is convergent and bounded by $1$ for $|z|\le 1/n$.
We can thus find $n\ge1$ such that~$E_n$ is not polar.
According to Lemma \ref{deg}, we have
$$\varphi_t^*(z)=\sum_{j+k\geq 1}\varphi_{jk}^*(t)z^j\bar{z}^k$$
where $\varphi_{jk}^*(t)$ depends polynomially on~$t$ with degree no
larger than $j+k$.
The Cauchy inequalities (viewing $\varphi^*_t$ as a function of two
independent variables $(z,\bar{z})$) yield
\[|\varphi_{jk}^*(t)|\leq n^{j+k}. \] 
By the Bernstein-Walsh lemma, we get that if $K\subset \C$ is compact
and $j+k\geq 2$, then
\[
\max_{t\in K}\|\varphi_{jk}^*(t)\| \leq C^{j+k} n^{j+k},
\qquad \text{where}\quad 
C =\exp\Big( \max_{t\in K}V_{E_n}(t) \Big).
\]
Hence $\varphi_t^*(z)$ is convergent for any $t\in \C$.
The argument for the set of parameters~$t$ such that $N^*_t$ is
convergent is similar.
\end{proof}

\begin{lemma}\label{generic1} If there exists $t\in\C$ such that
  $\Phi^*_t$ (resp. $N^*_t$) is divergent, then the set of parameters
  $t\in \C$ (resp. $t\in \R$) with convergent normalization~$\Phi^*_t$
  (resp. basic normal form~$N^*_T$) has Lebesgue measure zero.
\end{lemma}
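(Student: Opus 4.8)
The plan is to read the statement off from the preceding lemma, using only two standard facts of potential theory (both recalled in \cite{PM2}): a polar subset of $\C$ has zero planar Lebesgue measure, and, more precisely, a subset of $\R$ carrying positive one‑dimensional Lebesgue measure has positive logarithmic capacity, so that every polar subset of $\R$ is Lebesgue‑negligible.

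First I would fix the affine line $V=\{F_t\}_{t\in\C}$ through $(f_0,g_0)$ and $(f_1,g_1)$ and put $E=\{\,t\in\C : \Phi^*_t \text{ converges}\,\}$ (resp. $E=\{\,t\in\C : N^*_t \text{ converges}\,\}$). Saying that some $t\in\C$ produces a divergent $\Phi^*_t$ (resp. $N^*_t$) is exactly saying that $E\neq\C$. By the preceding lemma, ``$E$ not polar'' would force $E=\C$; hence $E$ must be polar.

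It then remains to turn ``polar'' into ``Lebesgue‑negligible''. For the normalization this is immediate: being polar, $E$ has zero planar Lebesgue measure, so the set of $t\in\C$ with convergent $\Phi^*_t$ has measure zero. For the basic normal form one restricts attention to real parameters: $E\cap\R$ is contained in the polar set $E$, hence is itself polar, hence has zero logarithmic capacity; since a subset of $\R$ of positive length cannot be polar, $E\cap\R$ has one‑dimensional Lebesgue measure zero, which is the asserted statement for $N^*_t$. (The same argument shows, incidentally, that the set of real $t$ with convergent $\Phi^*_t$ is also negligible, which is what is actually used when feeding this lemma into Theorem~\ref{cases-total-divergence}.)

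I expect no real obstacle here: the lemma is essentially a bookkeeping corollary of the ``polar or everything'' dichotomy established in the previous lemma. The only point deserving a little care is the last one — in the normal‑form case the conclusion is one‑dimensional, so one cannot merely quote ``polar $\Rightarrow$ planar measure zero'' but must use the sharper fact that positive linear measure implies positive capacity.
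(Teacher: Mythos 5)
Your argument is correct and follows the same route as the paper: invoke the preceding dichotomy lemma to conclude that the set $E$ of convergent parameters is polar once it is not all of $\C$, then use that a polar subset of $\C$ has planar Lebesgue measure zero and that its intersection with $\R$ has linear Lebesgue measure zero. The paper's own proof states exactly these two potential-theoretic facts, so your write-up (including the remark on the one-dimensional case) matches it.
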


\begin{proof} It follows from the fact that a polar subset of~$\C$ is of
  Lebesgue measure zero, and the intersection of a polar subset of~$\C$
  with~$\R$ is of Lebesgue measure zero.
\end{proof}
\medskip

\noindent {\bf Proof of Theorem \ref{generic-div}}:
Let ${\cal F} = {\cal F}_\lambda$.
Suppose that there exists $F_0\in{\cal F}$ the basic normalization of
which is divergent.  For $n\ge1$, denote by $E_n\subset \mathcal{F}$ the
set of $F\in \mathcal{F}$ which have convergent basic
normalization with $\varphi^*(z)$ convergent and
bounded by $1$ for $|z|\le 1/n$. It is easy to check that
each~$E_n$ is a closed set. Now
$$E=\cup_{n\geq 1}E_n$$
is the set of $F$ in $\mathcal{F}$ having a convergent basic
normalization.

\smallskip

\noindent Let $n\ge1$. We claim that the set $\mathcal{F}-E_n$ is dense. Otherwise,
there exists a map $F_1$ in the interior of~$E_n$.
Consider the subspace
\begin{multline*}
 V = \{\, F_t \mid F_t(z)=\lambda z(1+(tf_0+(1-t)f_1)(|z|^2))e^{2\pi i
    (tg_0(z)+(1-t)g_1(z))},\\ t\in \C \;(\text{resp.}\; \R) \,\}
\end{multline*}
By Lemma~\ref{generic1}, the set of parameters~$t$ giving rise to a convergent basic normalization has measure zero. But on the other hand it contains a neighborhood of~$0$, contradiction.
Therefore, the set of maps~$F$ in $\mathcal{F}$ with divergent basic normalization 
$$\mathcal{F}-E=\bigcap_{n\geq 1}(\mathcal{F}-E_n)$$
is a countable intersection of open dense set.

\smallskip

\noindent
Finally, recall that by Lemma~\ref{normalization}, if the basic
normalization of a map $F\in {\cal F}$ is divergent, then
all normalizations of~$F$ are divergent.

\smallskip

\noindent An analogous argument works for the families 
$\mathcal{F}_{\lambda,f,\bull}$  and $\mathcal{F}_{\lambda,\bull,g}$, by taking $g_0=g_1$ and $f_0=f_1$ respectively, which ends the proof of Theorem~\ref{generic-div}.

\medskip

\noindent {\bf Proof of Theorem \ref{cases-total-divergence}}:
Theorem~\ref{generic-div} gives an alternative: total convergence of
the basic normalization or
generic divergence of the normalizations.
In case~(i) it's generic divergence, in view of the existence of divergent conservative examples.
In case~(ii) too, in view of Corollary \ref{A-div} (or, more
accurately, its analogue where we replace $a|z|^{2d}$ with an
arbitrary $f(|z|^2)\not=0$).

\subsection{More questions.} 

\noindent {\bf 1) Nature of the special normal forms in the conservative case} 
By section \ref{pol}, polynomial normal forms always exist. As, in the conservative case we know that they correspond to non conservative conjugacies, this leaves open the question of the nature of the special normal forms, namely: is divergence of special normal forms $N$ generic in the conservative case? Recall that we know that the conjugacy itself to the special normal form is in general divergent.
\smallskip

\noindent {\bf 2) Nature of the special normal forms and special conjugacies in case of weak contraction} 
Is divergence of the special normal form, the conjugacy $\Phi$ and more generally of any conjugacy $\Psi$ to a normal form generic when 
$\omega$ is not a Brjuno number?  
\smallskip

\goodbreak

\noindent {\bf 3) What about the role of translated objects?}
In this case, there are no more invariant objects but only translated objects. Indeed, in the simple case that we are considering, the circle of radius $r$ centered at 0 is radially {\it translated} by $F$ onto the circle centered at 0 of strictly smaller radius $s=r(1+f(r^2))$. Let us call $\rho(r)\in\R/\T$ the rotation number of the diffeomorphism $g_r$ of $\R/\T$ defined by the restriction to the circle of radius $r$ of the argument $2\pi g$ of $F$. The values of $r$ such that $\rho(r)=p/q\in\Q/\T$ define {\it resonant annuli}. One can show that inside each such annulus there is a curve of {translated periodic orbits} of rotation number $p/q$, the translation depending on the orbit\footnote{A translated orbit is an orbit whose image under $F$ is obtained by a radial translation by some constant. They exist independently of the hypothesis that $F$ preserves the foliation ${\cal F}_0$.}

\hskip +0.3cm
\includegraphics[scale=0.35]{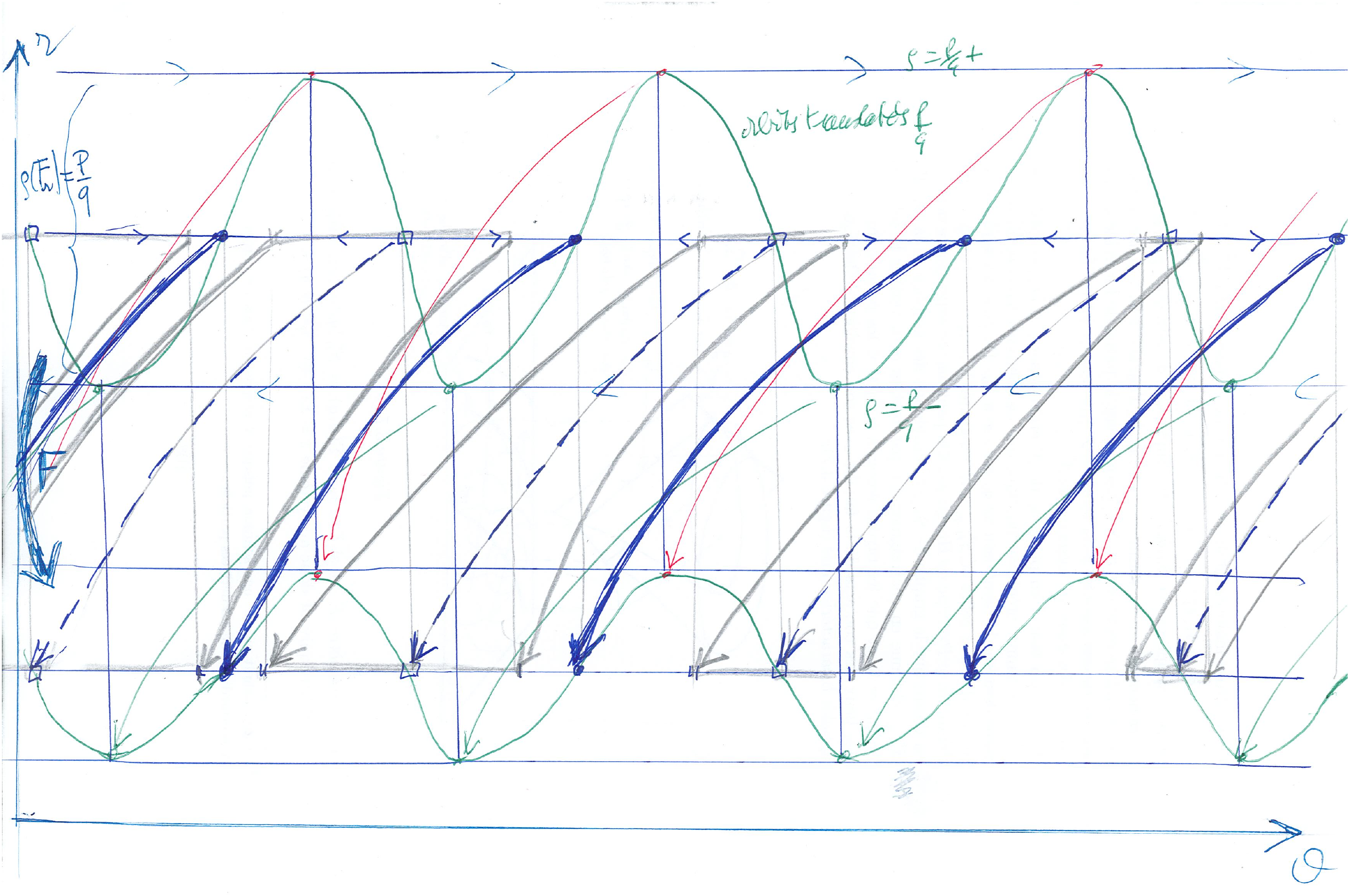}
\begin{center}
\vskip-0.5cm 
Figure 6 : Resonant zones.
\end{center}

\noindent Is the relation between the strength of attraction and the measure, in some system of local coordinates, of the set of translated circles whose rotation number is rational (the resonant zones) relevant to the conjugacy problem? In particular, are the results for 
$A_{\lambda,a,d}$ and $B_{\lambda,a,d}$ different?
\smallskip

\noindent {\it The problem is, of course, that translated objects are not invariant under conjugacy} ; in particular, in the case of a strong contraction the existence in some local coordinates of resonant zones, does not prevent analytical conjugacy to a rotation (see section
\ref{strong})!
\medskip

\subsection*{Thanks} to Jacques F\'ejoz, Abed Bounemoura and Ricardo P\'erez-Marco for fruitful questions and discussions.
\smallskip

\noindent
The first two authors thank Capital Normal University for its
hospitality.
\smallskip

\noindent
The third author is partially supported by National Key R\&D Program
of China \\ (2020YFA0713300), NSFC (No.s 11771303, 12171327, 11911530092,
11871045).



\end{document}